\DeclareMathOperator*{\Tr}{\mathrm{tr}}
\newcommand{\Lgen}{\mathcal{L}}
\newcommand{\Meas}{\mathcal{M}}
\newcommand{\R}{\mathbb{R}}
\DeclareMathOperator{\supp}{\mathrm{supp}}
\DeclareMathOperator{\Div}{\mathrm{div}}
\DeclareMathOperator{\Id}{\mathrm{Id}}
\newcommand{\1}{\mathbbm{1}}
\newcommand{\ep}{\epsilon}
\newcommand{\tensor}{\otimes}
\newcommand{\E}{\mathbf{E}}
\renewcommand{\P}{\mathbf{P}}
\newcommand{\leqc}{\lesssim}
\newtheorem{thm}{Theorem}[section]
\newtheorem{prop}[thm]{Proposition}
\newtheorem{lem}[thm]{Lemma}
\newtheorem*{lem*}{Lemma}
\theoremstyle{definition}
\newtheorem{definition}[thm]{Definition}
\theoremstyle{remark}
\newtheorem{remark}[thm]{Remark}
\newcommand{\loc}{\mathrm{loc}}
\newcommand{\MMMsw}[1]{ [\Meas_{t,x,v}]_{*} }
\newcommand{\dee}{\mathrm{d}}
\newcommand{\ds}{\dee s}
\newcommand{\dt}{\dee t}
\newcommand{\dx}{\dee x}
\newcommand{\dy}{\dee y}
\begin{document}

\title{{\bf Renormalized Solutions to\\ Stochastic Continuity Equations\\ with Rough Coefficients}} 
\author{Sam Punshon-Smith}

\maketitle

\begin{abstract}
  We consider the stochastic continuity equation associated to an It\^{o} diffusion with irregular drift and diffusion coefficients. We give regularity conditions under which weak solutions are renormalized in the sense of DiPerna/Lions, and prove well-posedness in $L^p$. As an application, we furnish a new proof of renormalizability (hence uniqueness) of weak solutions to the stochastic continuity equation when the diffusion matrix is constant and the drift only belongs to $L^q_tL^p$, where $\frac{2}{q} + \frac{n}{p} <1$, without resorting to the regularity of the stochastic flow or a duality method.
\end{abstract}

\section{Introduction and Motivation}

This article concerns the It\^{o} stochastic continuity equation
\begin{equation}\label{eq:stoch-transport}
  \begin{aligned}
    &\partial_tf + \Div(b f) - \frac{1}{2}\partial_i\partial_j(\sigma^k_i\sigma^k_j f) + \Div(\sigma^k f) \dot{W}^k = 0\\
    &f|_{t=0} = f_0,
   \end{aligned}
\end{equation}
on $[0,T]\times\R^n$, where $b:[0,T] \times \R^n \to \R^n$, $\sigma^k:[0,T]\times\R^n \to \R^n$, $k = 1,\ldots n$ are vector fields with some degree of irregularity (typically Sobolev or worse), $W^k$, $k=1\ldots n$ are a family of independent Wiener processes defined on a complete filtered probability space $(\Omega,\mathcal{F},\mathcal{F}_t,\P)$ and the stochastic integral interpreted in the It\^{o} sense. In the above equation, we follow the standard convention that repeated indices are summed.

Equation (\ref{eq:stoch-transport}) naturally arises from the following It\^{o} stochastic differential equation
\begin{equation}\label{eq:SDE}
  \dee X_t = b(t,X_t)\dt + \sigma^{k}(t,X_t)\dee W^k_t.
\end{equation}
If $b$ and $\sigma^k$ are smooth enough, then (\ref{eq:SDE}) admits a stochastic flow of diffeomorphisms (see \cite{Kunita1986-sr}) $x\mapsto X_t(x)$, whereby a simple consequence of It\^{o}'s formula and change of variables implies that
\begin{equation}\label{eq:pushforward-rep}
  f(t,x) = (X_{t}){}_{\#}f_0(x) = f_0(X_t^{-1}(x))\det\left(\nabla X_t^{-1}(x)\right)
\end{equation}
satisfies equation (\ref{eq:stoch-transport}) in the sense of distribution with initial data $f_0$. In fact, if the initial data $f_0$ is also smooth, then one can apply the classical results of Kunita \cite{Kunita1986-sr} to show that (\ref{eq:pushforward-rep}) actually defines the unique classical solution to (\ref{eq:stoch-transport}). 

In the rough (non-Lipschitz) setting, the results of \cite{Kunita1986-sr} do not apply and the validity of the representation formula (\ref{eq:pushforward-rep}) becomes unclear without some degree of regularity on $X_t(x)$. 

It is well known that for deterministic continuity equation
\begin{equation}\label{eq:determinisitic-eq}
  \partial_t f + \Div(bf) = 0,
\end{equation}
if $b$ does not have some degree of regularity, then the equation can be shown to be ill-posed \cite{Di_Perna1989-pu,Ambrosio2004-fx,Flandoli2009-pa}. At this level, it is convenient to work directly with the equation (\ref{eq:determinisitic-eq}), though one can also work with the flow (see \cite{Crippa2008-bk}). This was the approach taken in the seminal work of DiPerna-Lions \cite{Di_Perna1989-pu}, who introduced the notion of a {\it renormalized solution}, that is, a solution $f$ that satisfies
\begin{equation}\label{eq:determinisitic-eq-renormalized-form}
  \partial_t\Gamma(f) + \Div(b\, \Gamma(f)) + \Div b\,G(f) = 0,
\end{equation}
for all bounded $C^1(\R)$ functions $\Gamma(z)$, and $G(z) = z\Gamma^\prime(z) - \Gamma(z)$. Here, one of the main results in \cite{Di_Perna1989-pu} is the {\it renormalizability} of (\ref{eq:determinisitic-eq}), that is, if $b$ satisfies
\begin{equation}
  b \in L^1([0,T],W^{1,p}_\loc)\quad \text{and}\quad \Div b \in L^1_{\loc}([0,T]\times\R^n),
\end{equation}
Then any weak solution $f \in L^\infty([0,T];L^{r}_\loc)$, $r\leq \frac{p}{p-1}$ also satisfies (\ref{eq:determinisitic-eq-renormalized-form}) in the sense of distribution. This is achieved by the means of a so-called {\it commutator lemma} describing the decay of the the difference between a smoothing operation by convolution and the differential operator $ b\cdot\nabla$. A non-trivial extension to the case of vector fields with spatial regularity in $BV_{\loc}$ instead of $W^{1,1}_\loc$ was given by Ambrosio \cite{Ambrosio2004-fx}.

The technique of renormalization has proven to be a powerful one for establishing uniqueness and apriori estimates. For instance, one can show, as a consequence, that if, additionally one has
\begin{equation}
  b(1+ |x|)^{-1} \in L^1([0,T];L^\infty)
\end{equation}
then the solution $f$ to (\ref{eq:determinisitic-eq}) is unique.

This strategy can be carried over to the case of the stochastic transport equation (\ref{eq:stoch-transport}). Specifically, the case $\sigma^k = e_k$ was studied in \cite{Attanasio2011-iy}. Under the same conditions on $b$ as in the deterministic theory (spatial regularity in $BV_\loc$), the authors of \cite{Attanasio2011-iy} showed that (\ref{eq:stoch-transport}) can be renormalized, hence
\begin{equation}
  \partial_t\Gamma(f) + \Div(b\, \Gamma(f)) - \frac{1}{2}\Delta \Gamma(f) + \partial_k\Gamma(f) \dot{W}^k + \Div b\, G(f) = 0,
\end{equation}
for a suitable class of $\Gamma$. In addition, the main contribution of \cite{Attanasio2011-iy} was to show that renormalization implies uniqueness {\it without} the $L^\infty$ assumption on $\Div b$. This is done through a new uniqueness result on parabolic equations in $L^\infty$.

To the author's knowledge, there have been no attempts to prove renormalizability when $\sigma^k$ is rough and (potentially) non-degenerate. This has applications in the theory of turbulence, particularly the Kraichnan model \cite{Kraichnan1994-kc}, as well as in the modeling of complex and polymeric fluids, as remarked in \cite{Bris2008-fc}. One of the results in this paper addresses this question. In it's simplest form, we have
\begin{thm}\label{thm:rough-diff-renorm-theorm}
  Let $f$ be a weak $L^p$, $p \in [2,\infty]$ solution to the equation
  \begin{equation}\label{eq:simplified-stoch-trans}
    \partial_tf - \frac{1}{2}\partial_i\partial_j(\sigma_i\sigma_j f) + \Div(\sigma f) \dot{W} = 0.
  \end{equation}
  If $\sigma$ satisfies
  \begin{equation}
    \sigma \in L^2_t([0,T];W^{1,\gamma}_{\loc}), \quad \text{for}\quad \gamma = \frac{2p}{p-2},
  \end{equation}
  then $\Gamma(f)$ is a weak solution to the renormalized equation
  \begin{equation}\label{eq:thm-renorm-form}
\begin{aligned}
  &\partial_t\Gamma(f) - \frac{1}{2}\partial_i\partial_j(\sigma_i\sigma_j\Gamma(f)) + \Div(\sigma\Gamma(f))\dot{W}+ G(f)\Div\sigma \dot{W}\\
  &\hspace{1in}- \Div\big(\sigma(\Div\sigma) G(f)\big) = \frac{1}{2}G(f)\partial_i\sigma_j\partial_j\sigma_i + \frac{1}{2}H(f)(\Div\sigma)^2 , 
\end{aligned}
\end{equation}
for each $C^2(\R)$ and bounded $\Gamma(z)$, where
\begin{equation}
  G(z) = z\Gamma^\prime(z) - \Gamma(z),\quad H(z) = zG^\prime(z) - G(z).
\end{equation}
\end{thm}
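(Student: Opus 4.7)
The strategy is the classical DiPerna-Lions mollification argument, adapted to the stochastic setting via It\^{o}'s formula. Let $\rho_\epsilon$ be a standard mollifier on $\R^n$ and set $f^\epsilon := f*\rho_\epsilon$. Convolving (\ref{eq:simplified-stoch-trans}) with $\rho_\epsilon$ shows that $f^\epsilon$ satisfies, pointwise in $x$ and in the classical sense, the It\^{o} SPDE
\begin{equation*}
  df^\epsilon = \tfrac{1}{2}\partial_i\partial_j(\sigma_i\sigma_j f^\epsilon)\,\dt - \Div(\sigma f^\epsilon)\,\dee W + \tfrac{1}{2}\partial_i\partial_j D^\epsilon_{ij}\,\dt - \Div C^\epsilon\,\dee W,
\end{equation*}
where the mollification commutators
\begin{equation*}
  C^\epsilon := (\sigma f)*\rho_\epsilon - \sigma f^\epsilon, \qquad D^\epsilon_{ij} := (\sigma_i\sigma_j f)*\rho_\epsilon - \sigma_i\sigma_j f^\epsilon
\end{equation*}
measure the failure of convolution to commute with pointwise multiplication by $\sigma$ and $\sigma_i\sigma_j$.

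Since $f^\epsilon$ is smooth in $x$, It\^{o}'s formula applied pointwise yields an exact equation for $\Gamma(f^\epsilon)$. Expanding using the chain-rule identities
\begin{align*}
  \Gamma'(f^\epsilon)\partial_i\partial_j(\sigma_i\sigma_j f^\epsilon) &= \partial_i\partial_j(\sigma_i\sigma_j\Gamma(f^\epsilon)) + G(f^\epsilon)\partial_i\partial_j(\sigma_i\sigma_j) - \Gamma''(f^\epsilon)(\sigma\cdot\nabla f^\epsilon)^2, \\
  \Gamma'(f^\epsilon)\Div(\sigma f^\epsilon) &= \Div(\sigma\Gamma(f^\epsilon)) + G(f^\epsilon)\Div\sigma,
\end{align*}
together with the expansion of the It\^{o} correction $\tfrac{1}{2}\Gamma''(f^\epsilon)(\Div(\sigma f^\epsilon)+\Div C^\epsilon)^2$, one observes the crucial cancellation of $\Gamma''(f^\epsilon)(\sigma\cdot\nabla f^\epsilon)^2$ between the drift and the quadratic variation---this quantity has no hope of being controlled in the rough regime. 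Invoking the algebraic relations $G'(z)=z\Gamma''(z)$, $z^2\Gamma''(z)=H(z)+G(z)$, and the expansion $\partial_i\partial_j(\sigma_i\sigma_j)=\partial_i\sigma_j\partial_j\sigma_i + 2\sigma\cdot\nabla\Div\sigma + (\Div\sigma)^2$, the surviving terms reorganise into the renormalized equation (\ref{eq:thm-renorm-form}) for $\Gamma(f^\epsilon)$ up to the remainder
\begin{equation*}
  E^\epsilon = \tfrac{1}{2}\Gamma'(f^\epsilon)\partial_i\partial_j D^\epsilon_{ij}\,\dt - \Gamma'(f^\epsilon)\Div C^\epsilon\,\dee W + \Gamma''(f^\epsilon)\Div(\sigma f^\epsilon)\Div C^\epsilon\,\dt + \tfrac{1}{2}\Gamma''(f^\epsilon)(\Div C^\epsilon)^2\,\dt.
\end{equation*}

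It then remains to test against $\varphi\in C_c^\infty(\R^n)$ and pass to the limit $\epsilon\to 0$. The H\"older exponent relation $\tfrac{1}{p}+\tfrac{1}{\gamma}=\tfrac{1}{2}$ built into $\gamma=\tfrac{2p}{p-2}$ places every quadratic coupling of $f^\epsilon$ with $\sigma$ or $\nabla\sigma$ in $L^2_\loc$, so the main-line terms converge via dominated convergence and the continuity of $\Gamma$, $G$, $H$ (assuming first $\Gamma\in C^\infty_c$ and afterwards extending by approximation), with It\^{o}'s isometry handling the martingale pieces. The key analytic input for $\langle E^\epsilon,\varphi\rangle\to 0$ is the classical DiPerna-Lions first-order commutator lemma,
\begin{equation*}
  \Div C^\epsilon = [\sigma\cdot\nabla,\,\rho_\epsilon*]f + (f\Div\sigma)*\rho_\epsilon - f^\epsilon\Div\sigma \longrightarrow 0 \quad \text{in}\quad L^2_\loc,
\end{equation*}
together with $C^\epsilon\to 0$ in $L^2_\loc$ and $D^\epsilon_{ij}\to 0$ in $L^{p/(p-1)}_\loc$ by continuity of convolution.

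\textbf{Main obstacle.} The genuinely delicate step is the It\^{o}-correction cross term $\Gamma''(f^\epsilon)\Div(\sigma f^\epsilon)\Div C^\epsilon$ in tandem with the second-order remainder $\tfrac{1}{2}\Gamma'(f^\epsilon)\partial_i\partial_j D^\epsilon_{ij}$: the former contains $\sigma\cdot\nabla f^\epsilon$, which loses a full derivative on $f^\epsilon$, while the latter produces derivatives of $f^\epsilon$ inside $\Gamma'(f^\epsilon)$ after integration by parts. The plan is to absorb the bad derivative into an exact divergence via $\Gamma''(f^\epsilon)\sigma\cdot\nabla f^\epsilon = \Div(\sigma\Gamma'(f^\epsilon))-\Gamma'(f^\epsilon)\Div\sigma$, decompose $D^\epsilon_{ij}=\sigma_iC^\epsilon_j+\tilde D^\epsilon_{ij}$ with $\tilde D^\epsilon_{ij}:=(\sigma_i(\sigma_j f))*\rho_\epsilon-\sigma_i\cdot((\sigma_j f)*\rho_\epsilon)$---itself a first-order commutator acting on the $L^2_\loc$ function $\sigma_j f$---and then match the resulting second-order byproducts $\sigma\cdot\nabla(\Div C^\epsilon)$ between the two summands. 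After this careful cancellation only pairings of $C^\epsilon$, $\Div C^\epsilon$, $\tilde D^\epsilon_{ij}$ with bounded or $L^2_\loc$-controlled factors remain, all of which vanish in the limit by the DiPerna-Lions commutator estimate and continuity of the mollifier.
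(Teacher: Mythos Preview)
Your overall architecture---mollify, apply It\^{o}, isolate a remainder, pass to the limit---matches the paper, and you correctly identify the cancellation of $\Gamma''(f^\epsilon)(\sigma\cdot\nabla f^\epsilon)^2$ between the drift and the It\^o correction. The gap is in your treatment of the remainder $E^\epsilon$. The claim that the cross term $\Gamma''(f^\epsilon)(\sigma\cdot\nabla f^\epsilon)\Div C^\epsilon$ and the second-order piece $\tfrac12\Gamma'(f^\epsilon)\partial_i\partial_j D^\epsilon_{ij}$ can be reduced, via your decomposition $D^\epsilon_{ij}=\sigma_i C^\epsilon_j+\tilde D^\epsilon_{ij}$ and integration by parts, to pairings of $C^\epsilon,\Div C^\epsilon,\tilde D^\epsilon_{ij}$ with bounded factors is not established. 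If you carry out the integrations by parts you sketch, you are left with terms of the form $\langle \varphi\,\Gamma'(f^\epsilon),\,\sigma\cdot\nabla\Div C^\epsilon\rangle$ and $\langle \varphi\,\Gamma'(f^\epsilon),\,\partial_i\partial_j\tilde D^\epsilon_{ij}\rangle$: these involve \emph{second} derivatives of the commutators, and a further integration by parts reintroduces $\Gamma''(f^\epsilon)\nabla f^\epsilon$. The promised ``matching'' of these second-order byproducts is asserted but not shown, and it does not fall out of the first-order DiPerna--Lions lemma alone.

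The paper resolves this with a different organising principle. Rather than commutators that vanish, it introduces the two quantities
\[
T_{\sigma,\epsilon}(f)(x)=\int\nabla\eta_\epsilon(x-y)\cdot(\sigma(x)-\sigma(y))f(y)\,\dee y,\qquad
S_{\sigma,\epsilon}(f)(x)=\tfrac12\int\nabla^2\eta_\epsilon(x-y):(\sigma(x)-\sigma(y))^{\otimes 2}f(y)\,\dee y,
\]
which carry \emph{non-trivial} limits $(\Div\sigma)f$ and $\tfrac12(\partial_i\sigma_j\partial_j\sigma_i+(\Div\sigma)^2)f$ together with uniform $L^r_\loc$ bounds depending only on $\|\nabla\sigma\|_{L^\gamma}$ and $\|f\|_{L^p}$ (this is Lemma~\ref{lem:commutator-Lemma}). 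The decisive step is an exact algebraic identity for the second remainder,
\[
R^2_\epsilon(f)=\Gamma'(f_\epsilon)S_{\sigma,\epsilon}(f)+\tfrac12\Gamma''(f_\epsilon)(T_{\sigma,\epsilon}(f))^2
+\Div_\sigma R^1_\epsilon(f)-\tfrac12(\Div\sigma)R^1_\epsilon(f)
-\tfrac12\Gamma'(f_\epsilon)(\Div\sigma)T_{\sigma,\epsilon}(f)-\tfrac12\Gamma(f_\epsilon)\partial_i\sigma_j\partial_j\sigma_i,
\]
in which every occurrence of $\nabla f_\epsilon$ has already been eliminated. The ``second-order'' object $S_{\sigma,\epsilon}$ with its integral representation is precisely the missing tool: it absorbs the combination $\Lgen_\sigma f_\epsilon-\nabla_\sigma(\Div_\sigma f)_\epsilon+(\Lgen^*_\sigma f)_\epsilon$ in one stroke, so that no iterated integration by parts on $D^\epsilon_{ij}$ is needed. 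Once this identity is in hand, the limit is routine dominated convergence; the term $\tfrac12\Gamma''(f_\epsilon)(T_{\sigma,\epsilon}(f))^2$ is what forces the exponent $\gamma=2p/(p-2)$.
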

There is an analogue between Theorem \ref{thm:rough-diff-renorm-theorm} and the renormalizability results of Lions/LeBris \cite{Bris2008-fc} for parabolic equations with rough diffusion coefficients. The main difference is that in \cite{Bris2008-fc}, some apriori regularity on the solution is required, whereas for Theorem \ref{thm:rough-diff-renorm-theorm} no more regularity than $L^p$ on the solution is required. Also, as in \cite{Bris2008-fc}, an easy condition for existence and uniqueness (analogous to $L^\infty$ bounds on the divergence) is
\begin{equation}
  \partial_i\sigma_j\partial_j\sigma_i\in L^1([0,T]; L^\infty),\quad (\Div\sigma)^2 \in L^1([0,T]; L^\infty).
\end{equation}

In general, one might expect that the addition of a (non-degenerate) stochastic term to the deterministic continuity equation would allow for an improved well-posedness theory. Indeed, this effect was first discovered in the case $\sigma^k = e_k$
\begin{equation}\label{eq:sig=ek}
  \partial_t f + \Div(b\nabla f) - \frac{1}{2}\Delta f + \partial_k f\dot{W}^k = 0
\end{equation}
by Flandoli/Gubinelli/Priola \cite{Flandoli2009-pa} using a combination of a regularizing effect on the flow $x\mapsto X_t(x)$ and a renormalization procedure to prove well-posedness for
\begin{equation}
  b \in L^1([0,T]; C^\alpha_b).
\end{equation}
Apart from this, there have been many works studying the regularity of the stochastic flow $X_t(x)$ associated to (\ref{eq:SDE}) with rough coefficients (see \cite{Malliavin1999-ao,Jan2002-je,Airault2002-ux,Zhang2010-mk,Flandoli2010-vu,Attanasio2010-lt,Zhang2011-bm,Fedrizzi2013-cz,Rezakhanlou2014-xl,Mohammed2015-fp,MR3572321}, among other). In these cases, a {\it regularizing} property of the SDE is used to produce a stochastic flow, which is {\it more} regular that the coefficients. 
Roughly speaking, when the stochastic flow $x\mapsto X_t(x)$ has enough regularity, one can justify a representation of the form (\ref{eq:pushforward-rep}) and use this to prove well-posedness of the stochastic transport equation. This approach has been taken, for instance, in \cite{Fedrizzi2013-qe,Flandoli2009-pa,Mohammed2015-fp,Neves2016-ux}.

There is a natural question, however, as to whether one can prove renormalizability of (\ref{eq:stoch-transport}) when $b$ has less regularity than would be required for the deterministic case, {\it without} showing regularity of the stochastic flow. Indeed, in \cite{Attanasio2011-iy}, the authors were unable to remove the $BV_{\loc}$ requirement on $b$ when renormalizing. 

In this paper we will use Theorem \ref{thm:rough-diff-renorm-theorm} to consider drifts of the type
\begin{equation}\label{eq:b-KR}
  b \in L^q([0,T];L^p), \quad \frac{2}{q} + \frac{n}{p} < 1,\quad q\geq 2, p > 2,
\end{equation}
as a result we obtain a renormalizability result, which in it's simplest form is as follows.
\begin{thm}\label{thm:main-theorem}
  Let $b$ satisfy (\ref{eq:b-KR}) and  let $f$ be a weak $L^{p^\prime}$ solution $p^\prime = \frac{p}{p-2}$ to (\ref{eq:sig=ek}). If, in addition, $b$ satisfies
  \begin{equation}
    \Div b \in L^1([0,T]; L^1_\loc),
  \end{equation}
  then $f$ satisfies the renormalized equation
  \begin{equation}\label{eq:weak-renorm-thm}
    \partial_t\Gamma(f) + \Div(b\, \Gamma(f)) - \frac{1}{2}\Delta \Gamma(f) + \partial_i\Gamma(f)\dot{W}_i +  G(f)\!\Div b = 0,
  \end{equation}
for each $\Gamma \in C^2_b(\R)$.
\end{thm}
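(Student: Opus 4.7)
The plan is to combine the mollification and It\^o's formula strategy underlying Theorem \ref{thm:rough-diff-renorm-theorm} --- trivially applicable to the diffusion part here since $\sigma^k = e_k$ is smooth --- with a parabolic commutator estimate tailored to the subcritical Krylov--R\"ockner regime. The essential new ingredient, absent in Theorem \ref{thm:rough-diff-renorm-theorm}, is the treatment of the drift $b$: it enjoys no Sobolev regularity and therefore falls outside the classical DiPerna--Lions framework. Since the diffusion is constant, the renormalization of the stochastic part is immediate, and the whole difficulty is concentrated in the drift commutator.

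Mollify in space, $f_\epsilon := f * \rho_\epsilon$, turning (\ref{eq:sig=ek}) into
\begin{equation*}
\partial_t f_\epsilon + \Div(b f_\epsilon) - \tfrac12\Delta f_\epsilon + \partial_k f_\epsilon\,\dot W^k = -R_\epsilon,\qquad R_\epsilon := (\Div(bf))*\rho_\epsilon - \Div(bf_\epsilon).
\end{equation*}
It\^o's formula applied to $\Gamma(f_\epsilon)$ is legitimate because $f_\epsilon(t,\cdot)$ is smooth in $x$; using the pointwise identity $\Gamma'(f_\epsilon)\tfrac12\Delta f_\epsilon = \tfrac12\Delta\Gamma(f_\epsilon) - \tfrac12\Gamma''(f_\epsilon)|\nabla f_\epsilon|^2$, the second term cancels the It\^o correction $\tfrac12\Gamma''(f_\epsilon)|\nabla f_\epsilon|^2$ coming from the quadratic variation of $\partial_k f_\epsilon\,\dot W^k$, and one obtains
\begin{equation*}
\partial_t\Gamma(f_\epsilon) + \Div(b\,\Gamma(f_\epsilon)) + G(f_\epsilon)\Div b - \tfrac12\Delta\Gamma(f_\epsilon) + \partial_k\Gamma(f_\epsilon)\,\dot W^k = -\Gamma'(f_\epsilon) R_\epsilon.
\end{equation*}
Splitting $R_\epsilon = [(b\cdot\nabla f)*\rho_\epsilon - b\cdot\nabla f_\epsilon] + [(f\Div b)*\rho_\epsilon - f_\epsilon\Div b]$, the second (lower-order) bracket tends to zero in $L^1_{\loc}$ by Lebesgue differentiation, using $\Div b\in L^1_tL^1_{\loc}$ and the strong convergence of $f_\epsilon$; the first bracket is the classical DiPerna--Lions commutator, and controlling it is the crux.

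The hard part is to handle this DiPerna--Lions bracket when $b$ has no Sobolev regularity. The strategy is to extract the missing derivative from $f$ itself via the parabolicity in the It\^o form of (\ref{eq:sig=ek}): although the energy identity for $\int\Gamma(f)\,\dx$ exhibits no net dissipation (reflecting the Stratonovich transport structure in disguise), the Duhamel representation of $f$ via the heat semigroup $e^{t\Delta/2}$, combined with the subcriticality $\tfrac{2}{q}+\tfrac{n}{p}<1$, should furnish a quantitative gain of fractional spatial regularity for $f$ in a Besov or Bessel-potential space with exponents dual to $(q,p)$. This gain is exactly what is needed to validate an extended commutator estimate for the pair $(b,f)$, giving $\Gamma'(f_\epsilon) R_\epsilon\to 0$ in $\mathcal{D}'$. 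Locating the precise function-space pairing in which the parabolic smoothing of $f$ offsets the lost derivative on $b$ is the decisive technical input and constitutes the novelty promised in the abstract --- the route that avoids both flow regularity and duality. Once the commutator vanishes, the remaining terms pass to the limit by strong convergence $f_\epsilon\to f$ in $L^{p'}_{\loc}$, continuity and uniform boundedness of $\Gamma$ and $G$, and the Burkholder--Davis--Gundy inequality for the stochastic integral, producing (\ref{eq:weak-renorm-thm}).
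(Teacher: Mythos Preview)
Your approach diverges from the paper's and leaves a genuine gap at its decisive step. The paper never extracts regularity from $f$; instead it performs a Zvonkin-type (It\^o--Tanaka) transformation. One solves the damped backward parabolic problem $\partial_t u_\lambda + b\cdot\nabla u_\lambda + \tfrac{1}{2}\Delta u_\lambda = \lambda u_\lambda - b$, $u_\lambda|_{t=T}=0$, and pushes $f$ forward by the diffeomorphism $\phi_t^\lambda(x) = x + u_\lambda(t,x)$. The transformed density $h_\lambda = (\phi_t^\lambda)_\# f$ then solves a stochastic continuity equation whose drift $\hat b_\lambda$ lies in $L^q_t(W^{2,p})$ while the new diffusion coefficients $\hat\sigma_\lambda^k$ are merely in $L^q_t(W^{1,p})$: the irregularity has been shifted from drift to diffusion, where Theorem~\ref{thm:rough-diff-renorm-theorm} (more precisely its extension Theorem~\ref{thm:main-renorm-result-generalized}) applies directly. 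After renormalizing $h_\lambda$, one sends $\lambda\to\infty$ using sharp decay and convergence estimates on $u_\lambda$ (Lemmas~\ref{lem:precise-decay}, \ref{lem:divlamu-conv}, \ref{lem:Relaxation-Lem}) to recover~(\ref{eq:weak-renorm-thm}). Theorem~\ref{thm:rough-diff-renorm-theorm} is therefore not a warm-up but the engine of the argument.

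The gap in your route is the asserted fractional regularity of $f$ via Duhamel. You correctly note that the energy identity exhibits no net dissipation --- the It\^o Laplacian is exactly offset by the quadratic variation of the noise --- yet then claim the mild formulation gives regularity anyway. No mechanism is supplied, and none is apparent: in the mild form the stochastic convolution $\int_0^t e^{(t-s)\Delta/2}\partial_k f\,dW^k$ carries a full spatial derivative of the unknown, and an It\^o-isometry computation in $H^s$ returns precisely the $H^s$ norm of $f$ with no gain, so there is nothing to bootstrap. Pathwise, the equation is pure Stratonovich transport and $f$ inherits only the regularity of $f_0$ composed with the inverse stochastic flow --- invoking regularity of that flow is exactly what the paper sets out to avoid. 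Without a concrete substitute for Sobolev regularity on either $b$ or $f$, your drift commutator $R_\epsilon$ cannot be shown to vanish, and the argument stalls at the step you yourself identify as ``the decisive technical input.''
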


Condition (\ref{eq:b-KR}) is reminiscent of the so-called Ladyzhenskaya-Prodi-Serrin condition of fluid mechanics, where $<$ is replaced by $\leq$. The case with strict inequality was first considered by Krylov and R\"{o}ckner \cite{Krylov2004-uu}, where they proved strong solutions to the SDE with such a drift. As such, we will refer to condition (\ref{eq:b-KR}) on $b$ as the Krylov-R\"{o}ckner condition.

The proof Theorem \ref{thm:main-theorem} is inspired by the results in \cite{Krylov2004-uu}. In fact, we will utilize a version of approach introduced in \cite{Flandoli2010-vu} used by \cite{Fedrizzi2013-cz}, where the authors study the regularity properties of the damped backwards parabolic problem
\begin{equation}\label{eq:back-para-ito-tanaka}
  \partial_tu_{\lambda} + b\cdot \nabla u_{\lambda} + \frac{1}{2}\Delta u_{\lambda} = \lambda u_\lambda - b, \quad u_\lambda|_{t=T} =0.
\end{equation}
The solution $u_{\lambda}:[0,T]\times\R^n\to \R^n$ can then be used as a Zvonkin type transformation to transform the rough stochastic problem (\ref{eq:sig=ek}) into to one with a more regular drift, at the expense of introducing rough diffusion coefficients (the so-called It\^{o}-Tanaka trick). The transformed problem can then be renormalized by a generalization of Theorem \ref{thm:main-theorem} to the transformed equation. Using careful estimates on the solution to (\ref{eq:back-para-ito-tanaka}) as $\lambda \to \infty$. We can pass the limit as $\lambda \to \infty$ in renormalized version of the transformed problem to recover the original, renormalized equation (\ref{eq:weak-renorm-thm}).

The uniqueness implied by Theorem \ref{thm:main-theorem}, while better than \cite{Maurelli2011-wx}, is not better than the results of \cite{Beck2014-vq} where the authors are able to also treat the critical case $\frac{2}{q}+ \frac{n}{p} = 1$. In fact, to the author's knowledge \cite{Beck2014-vq} is the best result so far regarding uniqueness of (\ref{eq:sig=ek}). It is worth remarking that the uniqueness proof in this case is obtained by a duality method the authors do not obtain a renormalized equation (\ref{eq:weak-renorm-thm}). At this point, it is still an open question as to whether one can obtain renormalized solutions to (\ref{eq:sig=ek}) in the critical case. 

Besides uniqueness, another motivation for considering renormalizability of (\ref{eq:stoch-transport}) stems from kinetic theory, particularly the Boltzmann equation. Here renormalized solutions play an essential role in proving existence of solutions with general $L^1$ initial data (see \cite{DiPerna1989-qq}), but are not enough to prove uniqueness. From a physical perspective, it is of interest to consider the Boltzmann equation under the influence of a rough deterministic force. If the force is too rough, then the equation cannot be renormalized, preventing one from obtaining renormalized solutions to the equation. However, with the addition of another (degenerate) stochastic force, interpreted as environmental noise, the hope is that the equation can be renormalized and one can obtain renormalized martingale martingale solutions (see \cite{Punshon-Smith2016-vs}). In the collision-less setting (free stochastic kinetic transport) well-posedness has been proved in \cite{Fedrizzi2016-tr}.

\subsection{Overview}

The plan of the rest of the paper is as follows: In Section \ref{sec:prelim} we go over some preliminaries, deriving the renormalized form as it appears in Theorem \ref{thm:rough-diff-renorm-theorm}, and precisely defining the notion of weak $L^p$ solutions and the renormalized form. In Section \ref{sec:renorm-rough-diff} we are concerned with proving Theorem \ref{thm:rough-diff-renorm-theorm}. To do this, we first prove a series of regularization lemmas (similar to commutator lemmas) unique to this problem. At the end of this section we also give the proofs of existence and uniqueness. In Section \ref{sec:renorm-sol-int-drift} we aim to prove Theorem \ref{thm:main-theorem}. To do this we first study precise analytic properties of the parabolic problem (\ref{eq:back-para-ito-tanaka}), specifically how various norms of $u_{\lambda}$ scale with $\lambda$. Then we prove that we can transform a weak $L^p$ solution to a more regular one by a careful choice of test function. Finally we study the behavior of $u_\lambda$ as $\lambda \to 0$, and use it to relax the renormalized form of the transformed problem to the original one.

\section{Preliminaries}\label{sec:prelim}
\subsection{Renormalized Form}\label{sec:Renormalized-form}

The renormalized form (\ref{eq:thm-renorm-form}) in Theorem \ref{thm:rough-diff-renorm-theorm} is not immediately obvious even at formal level. To renormalize, one has to account for It\^{o} corrections and renormalize the second order derivative. Indeed there are some rather interesting cancellations that occur between the It\^{o} terms and those produces by renormalizing the second order Laplacian.

In this section we outline the formal procedure for renormalizing (\ref{eq:thm-renorm-form}) under the assumptions that $f$ and $\sigma$ are smooth. The procedure is enlightening, and sheds some insight into the cancellations that occur in the proof of Theorem \ref{thm:rough-diff-renorm-theorm}.

We start by writing equation (\ref{eq:simplified-stoch-trans}) and multiply both sides by $\Gamma^\prime(f)$
\begin{equation}
  \partial_tf  - \partial_i(\sigma_i\sigma_j)\partial_j f - \frac{1}{2}\sigma_i\sigma_j\partial_i\partial_j f + \sigma\cdot\nabla f \dot{W} + \Div \sigma f\dot{W} = \frac{1}{2}\partial_i\partial_j(\sigma_i\sigma_j) f,
\end{equation}
Using It\^{o}'s formula, we can write the time derivative as
\begin{equation}
 \Gamma^\prime(f)\partial_tf = \partial_t\Gamma(f)- \frac{1}{2}\Gamma^{\prime\prime}(f)(\Div(\sigma f))^2
\end{equation}
and the second order derivative becomes
\begin{equation}
  \Gamma^\prime(f) \frac{1}{2}\sigma_i\sigma_j\partial_i\partial_jf = \frac{1}{2}\sigma_i\sigma_j\partial_i\partial_j\Gamma(f) + \frac{1}{2}\Gamma^{\prime\prime}(f)(\sigma\cdot\nabla f)^2.
\end{equation}
We readily conclude that $\Gamma(f)$ solves
\begin{equation}
\begin{aligned}
  &\partial_t\Gamma(f) - \partial_i(\sigma_i\sigma_j)\partial_j \Gamma(f) - \frac{1}{2}\sigma_i\sigma_j\partial_i\partial_j\Gamma(f) + \sigma\cdot \nabla\Gamma(f)\dot{W} + f\Gamma^\prime(f)\Div\sigma\dot{W}\\
 &\hspace{1in}= f\Gamma^\prime(f)\frac{1}{2}\partial_i\partial_j(\sigma_i\sigma_j)  + \frac{1}{2}\Gamma^{\prime\prime}(f)\left(\Div(\sigma f)^2 - (\sigma\cdot\nabla f)^2\right).
\end{aligned}
\end{equation}
Writing the left-hand side above back in divergence form gives
\begin{equation}\label{eq:renormalized-non-simplified}
  \begin{aligned}
    &\partial_t\Gamma(f)- \frac{1}{2}\partial_i\partial_j(\sigma_i\sigma_j\Gamma(f)) + \Div(\sigma\Gamma(f))\dot{W} + G(f)\Div\sigma\dot{W}\\
    &\hspace{1in}= \frac{1}{2}G(f)\partial_i\partial_j(\sigma_i\sigma_j)  + \frac{1}{2}\Gamma^{\prime\prime}(f)\left(\Div(\sigma f)^2 - (\sigma\cdot\nabla f)^2\right),
\end{aligned}
\end{equation}
where $G(z) = z\Gamma^\prime(z) - \Gamma(z)$. We may now utilize some cancellation on difference of squares term on the right-hand side of (\ref{eq:renormalized-non-simplified}). We find
\begin{equation}
  \begin{aligned}
    \Gamma^{\prime\prime}(f)\left(\Div(\sigma f)^2 - (\sigma\cdot\nabla f)^2\right) &= \Gamma^{\prime\prime}(f)\Div \sigma f\left(\Div(\sigma f) + \sigma\cdot\nabla f\right)\\
    &= G^\prime(f)f(\Div\sigma)^2 + 2G^\prime(f)(\Div\sigma)\sigma\cdot\nabla f\\
    &= G^\prime(f)f(\Div\sigma)^2 + 2(\Div\sigma)\sigma\cdot\nabla G(f)
  \end{aligned}
\end{equation}
where it is worth remarking that $G^\prime(z) = z\Gamma^{\prime\prime}(z)$. Some additional cancellations will take place upon writing the term involving $\nabla G(f)$ above in divergence form
\begin{equation}
    (\Div\sigma)\sigma\cdot\nabla G(f) = \Div\big(\sigma (\Div\sigma)G(f)\big) - \Div(\sigma(\Div \sigma))G(f)
  \end{equation}
Substituting these above expressions into the right-hand-side of equation (\ref{eq:renormalized-non-simplified}) gives
\begin{equation}\label{eq:renormalized-non-simplified-1}
  \begin{aligned}
    &\partial_t\Gamma(f)- \frac{1}{2}\partial_i\partial_j(\sigma_i\sigma_j\Gamma(f)) + \Div(\sigma\Gamma(f))\dot{W} + G(f)\Div\sigma\dot{W}\\
    &\hspace{.5in}= \Div\big(\sigma (\Div\sigma)G(f)\big) + \frac{1}{2}G^\prime(f)f(\Div\sigma)^2\\
    &\hspace{1in}+ G(f)\left(\frac{1}{2}\partial_i\partial_j(\sigma_i\sigma_j)  - \Div(\sigma\Div\sigma)\right)
\end{aligned}
\end{equation}
Finally, the last term on the right-hand side above can be simplified using the relation
\begin{equation}
 \frac{1}{2}\partial_i\partial_j(\sigma_i\sigma_j) - \Div(\sigma\Div\sigma) = \frac{1}{2}\partial_i\sigma_j\partial_j\sigma_i- \frac{1}{2}(\Div\sigma)^2.
\end{equation}
Grouping like terms and recalling $H(z) = zG^\prime(z) - G(z)$ gives the final renormalized form.

\subsection{Notation and Definitions}

In what follows, we will simplify notation. We will fix an arbitrary final time $T >0$ throughout the paper. We will also find it convenient to denote
\begin{equation}
  L^q_t(L^p) = L^q([0,T];L^p(\R^n))
\end{equation}
the space of functions $f:[0,T]\times\R^n \to \R$ satisfying
\begin{equation}
  \|f\|_{L^q_t(L^p)} := \left(\int_0^T\left(\int_{\R^n} |f(t,x)|^p\dx\right)^{q/p}\right)^{1/q} <\infty,
\end{equation}
extended in the usual way when $q$ or $p = \infty$. More generally, for any Banach space $B$ we will write
\begin{equation}
 L^q_t(B) = L^q([0,T];B)
\end{equation}
the Lebesgue Bochner space of strongly measurable functions $f:[0,T] \to B$ such that
\begin{equation}
  \|f\|_{L^q_t(B)} := \left(\int_0^T \|f(t)\|_B^q\dt\right)^{1/q} < \infty.
\end{equation}

For a given $p \in [1,\infty]$ we denote by $p^{\prime} \in [1,\infty]$ it's H\"{o}lder conjugate satisfying
\begin{equation}
  \frac{1}{p} + \frac{1}{p^\prime} = 1.
\end{equation}
We will also denote $\langle\,,\,\rangle$ the pairing between functions on $\R^n$, defined by
\begin{equation}
  \langle f, g\rangle := \int_{\R^n} f\,g\,\dx.
\end{equation}
It follows the $\langle f, g\rangle < \infty$ if $f \in L^p$ and $g\in L^{p^\prime}$.

First, we introduce the precise definition of weak $L^p$ solution to (\ref{eq:stoch-transport})

\begin{definition}
  Let $p \in [2,\infty]$ and
  \begin{equation}
    b\in L^1_t(L^{p^\prime}_\loc), \quad \sigma^k \in L^2_t(L^{2p^{\prime}}_\loc).
  \end{equation}
  A weak $L^p$ solution to (\ref{eq:stoch-transport}) is a measurable mapping $f:\Omega\times[0,T]\times\R^n \to \R$ such that, $\P$ almost surely $f(t,x)$ belongs to $L^\infty_t(L^p_{\loc})$ and for each $\varphi\in C^\infty_c(\R^n)$, the process $t\mapsto \langle f(t),\varphi\rangle$ has a modification that is a continuous $\mathcal{F}_t$-semimartingale and satisfies $\P$ almost surely
  \begin{equation}\label{eq:time-integrated-weak-form}
    \begin{aligned}
      \langle f(t), \varphi\rangle &= \langle f_0,\varphi\rangle + \int_0^t\langle f(s),b(s)\cdot\nabla\varphi\rangle \ds +\frac{1}{2}\int_0^t\langle\sigma_i^k(s)\sigma^k_j(s)f(s),\partial_i\partial_j\varphi\rangle\ds \\
      &\hspace{1in}+ \int_0^t\langle f(s),\sigma^k(s)\cdot\nabla\varphi\rangle \dee W^k(s).
    \end{aligned}
  \end{equation}
\end{definition}

In general we would like to make sense of the renormalized form of a weak $L^p$ solution. Let $\Gamma(z)$ be a smooth function bounded function, and $f$ be a weak $L^p$ solution to (\ref{eq:stoch-transport}), following a similar derivation to that in Section \ref{sec:Renormalized-form}, the renormalized for of (\ref{eq:stoch-transport}) can be shown to be given by
\begin{equation}\label{eq:renormalized-form-1}
\begin{aligned}
  &\partial_t\Gamma(f) +\Div(b\,\Gamma(f)) - \frac{1}{2}\partial_i\partial_j(\sigma^k_i\sigma_j^k\Gamma(f)) + \Div(\sigma^k\Gamma(f))\dot{W}^k+ G(f)\Div\sigma^k \dot{W}^k\\
  &\hspace{.5in}+ G(f)\Div b - \Div\big(\sigma^k(\Div\sigma^k) G(f)\big) =  \frac{1}{2}G(f)\partial_i\sigma_j^k\partial_j\sigma_i^k + \frac{1}{2}H(f)(\Div\sigma^k)^2 , 
\end{aligned}
\end{equation}

It's important to note that the above renormalized equation (\ref{eq:renormalized-form-1}) is in divergence form, so that in distribution, this equation makes sense without any regularity requirements on $\Gamma(f)$ and no more than one derivative on $\sigma^k$ and $b$.

\begin{definition}
  Let $f$ be a weak $L^p$ solution to (\ref{eq:stoch-transport}) for $p\in[2,\infty]$. and suppose that
  \begin{equation}
    \Div b \in L^1_t(L^1_\loc), \quad \Div \sigma \in L^2_t(L^2_{\loc}),\quad \partial_i\sigma^k_j\partial_j\sigma^k_i \in L^1_t(L^1_\loc).
  \end{equation}
  We say that $f$ is renormalizable if for each $\Gamma\in C^2_b(\R^n)$, and each $\varphi \in C^\infty_c(\R^n)$, the process $t\mapsto \langle\Gamma(f(t)),\varphi\rangle$ has a modification that is a continuous $\mathcal{F}_t$-semimartingale and for each $t\in[0,T]$ satisfies $\P$ almost-surely
  \begin{equation}\label{eq:Renorm-weak-form}
    \begin{aligned}
      &\langle \Gamma(f(t)),\varphi\rangle = \langle \Gamma(f(0)),\varphi\rangle + \int_0^t \langle \Gamma(f(s)), b\cdot\nabla\varphi\rangle \ds + \frac{1}{2}\int_0^t\langle \Gamma(f(s))\sigma^k_i\sigma^k_j,\partial_i\partial_j\varphi\rangle\ds\\
      &\hspace{1in}+ \int_0^t \langle \Gamma(f(s)),\sigma^k\cdot\nabla\varphi \rangle\dee W^k(s) - \int_0^t \langle G(f(s))\Div \sigma^k, \varphi\rangle \dee W^k(s)\\
      &\hspace{1in}- \int_0^t\langle G(f(s))\Div b,\varphi\rangle\ds - \int_0^t \langle G(f(s))\Div\sigma^k,\sigma^k \cdot\nabla\varphi\rangle\ds\\
      &\hspace{1in}+ \frac{1}{2}\int_0^t\langle G(f(s))\partial_i\sigma_j^k\partial_j\sigma_i^k,\varphi\rangle\ds + \frac{1}{2}\int_0^t\langle H(f(s))(\Div\sigma^k)^2,\varphi\rangle \ds.
      \end{aligned}
    \end{equation}
\end{definition}

\section{Renormalization for Rough Diffusion}\label{sec:renorm-rough-diff}
In this section we focus on the renormalizability of drift-less stochastic continuity equation with only one diffusion coefficient
\begin{equation}\label{eq:driftless-cont-eq}
  \partial_t f - \frac{1}{2}\partial_i\partial_j(\sigma_i\sigma_jf) + \Div(\sigma f)\dot{W}= 0,
\end{equation}

Our goal will be establish conditions on $\{\sigma\}$ under which weak $L^p$ solutions to (\ref{eq:driftless-cont-eq}) are renormalized solutions.

In what follows, we will find it useful to introduce the differential operators
\begin{equation}
  \begin{aligned}
    \Lgen_\sigma\varphi &:= \frac{1}{2}\sigma_i\sigma_j\partial_i\partial_j\varphi,\quad &\nabla_{\sigma}\varphi &:= \sigma\cdot\nabla\phi\\
    \Lgen^*_{\sigma}\phi &:= \frac{1}{2}\partial_i\partial_j(\sigma_i\sigma_j\varphi),\quad&  \Div_{\sigma}\phi &:= \Div(\sigma\phi).
  \end{aligned}
\end{equation}
With this notation, the stochastic continuity equation (\ref{eq:driftless-cont-eq}) takes the form
\begin{equation}
  \partial_t f - \Lgen^*_\sigma f + \Div_{\sigma} f \dot{W} = 0. 
\end{equation}
and the renormalized form becomes,
\begin{equation}\label{eq:renormalized-form-simple}
  \begin{aligned}
    &\partial_t\Gamma(f) - \Lgen^*_\sigma\Gamma(f) + \Div_\sigma \Gamma(f)\dot{W} + \Div\sigma\, G(f) \dot{W}\\
    &\hspace{1in}= \Div_\sigma(\Div\sigma\, G(f)) + \tfrac{1}{2}G(f)\partial_i\sigma_j\partial_j\sigma_i + \tfrac{1}{2}H(f)(\Div\sigma)^2.
    \end{aligned}
\end{equation}

The main result of this section is the following:
\begin{thm}\label{thm:main-renorm-result}
  Let $f$ be a weak $L^p$ solution, $p\in[2,\infty]$ to (\ref{eq:driftless-cont-eq}), and suppose that $\sigma$ satisfies 
  \begin{equation}
\sigma \in L^2_t(W^{1,\gamma}_\loc), \quad \gamma = \frac{2p}{p-2},
\end{equation}
then $f$ is renormalizable.
\end{thm}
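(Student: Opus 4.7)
The plan is to adapt the DiPerna--Lions mollification argument to the stochastic second-order setting, making rigorous the formal Itô calculation of Section~\ref{sec:Renormalized-form}. Fix a symmetric spatial mollifier $\rho_\epsilon$ on $\R^n$ and set $f^\epsilon := \rho_\epsilon \ast f$. Convolving the weak form~\eqref{eq:time-integrated-weak-form} against $\rho_\epsilon(x-\cdot)$ exhibits $f^\epsilon(t,x)$, for each fixed $x$, as a continuous $\mathcal{F}_t$-semimartingale satisfying the pointwise-in-$x$ Itô identity
\begin{equation*}
  \dee f^\epsilon = \bigl(\Lgen^*_\sigma f^\epsilon - r^1_\epsilon\bigr)\,\dt \;-\; \bigl(\Div_\sigma f^\epsilon - r^2_\epsilon\bigr)\,\dee W,
\end{equation*}
where the remainders $r^1_\epsilon := \Lgen^*_\sigma f^\epsilon - (\Lgen^*_\sigma f)\ast\rho_\epsilon$ and $r^2_\epsilon := \Div_\sigma f^\epsilon - (\Div_\sigma f)\ast\rho_\epsilon$ are the second- and first-order commutators between the differential operators of the equation and the mollification.

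Applying Itô's formula to $\Gamma(f^\epsilon)$ and performing the algebraic rearrangement of Section~\ref{sec:Renormalized-form}---which is valid pointwise because $f^\epsilon$ is smooth in $x$---produces exactly the renormalised equation~\eqref{eq:renormalized-form-simple} with $f$ replaced by $f^\epsilon$, plus four explicit error terms:
\begin{equation*}
  -\Gamma'(f^\epsilon)\,r^1_\epsilon\,\dt,\quad \Gamma'(f^\epsilon)\,r^2_\epsilon\,\dee W,\quad -\Gamma''(f^\epsilon)\,\Div_\sigma f^\epsilon\cdot r^2_\epsilon\,\dt,\quad \tfrac{1}{2}\Gamma''(f^\epsilon)(r^2_\epsilon)^2\,\dt.
\end{equation*}
The strategy is to test this equation against an arbitrary $\varphi \in C^\infty_c(\R^n)$ and show that each of these four error terms vanishes as $\epsilon \to 0$, while the remaining renormalised terms pass to their $f$-versions by dominated convergence, using boundedness of $\Gamma, G, H$ together with $f^\epsilon \to f$ strongly in $L^p_\loc$.

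The bulk of the work is then carried by two regularisation lemmas, each exploiting the critical balance $\tfrac{1}{\gamma} + \tfrac{1}{p} = \tfrac{1}{2}$ built into the hypothesis. For the first-order commutator $r^2_\epsilon$, decomposing $\Div_\sigma f = \sigma\cdot\nabla f + (\Div\sigma)f$ splits $r^2_\epsilon$ into a classical DiPerna--Lions piece $[\sigma\cdot\nabla,\rho_\epsilon\ast]f$ and a product-mollification error $(\Div\sigma)f^\epsilon - ((\Div\sigma)f)\ast\rho_\epsilon$; the former vanishes in $L^2_t(L^2_\loc)$ via the standard cancellation using $\sigma \in L^2_t(W^{1,\gamma}_\loc)$ and $f \in L^\infty_t(L^p_\loc)$, while the latter vanishes in the same topology since both summands converge to $(\Div\sigma)f \in L^2_t(L^2_\loc)$ by Hölder with $\tfrac{1}{\gamma}+\tfrac{1}{p}=\tfrac{1}{2}$. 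For the second-order commutator $r^1_\epsilon$, which has no direct deterministic analogue, I expand $\Lgen^*_\sigma = \tfrac{1}{2}\partial_i\partial_j(\sigma_i\sigma_j\cdot)$ via the Leibniz rule and treat each resulting piece---depending on how the two derivatives are distributed among $\sigma_i$, $\sigma_j$, and the mollification kernel---as either a product-mollification error or an iterated first-order commutator, yielding $r^1_\epsilon \to 0$ in $L^1_t(L^1_\loc)$.

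The step I expect to be the main obstacle is the quadratic-variation cross term $\Gamma''(f^\epsilon)\,\Div_\sigma f^\epsilon\cdot r^2_\epsilon$. Because $\Div_\sigma f^\epsilon$ is only \emph{bounded}, and not convergent to zero, in $L^2_t(L^2_\loc)$, the product vanishes in $L^1_t(L^1_\loc)$ only if the convergence of $r^2_\epsilon$ is genuinely strong in $L^2_t(L^2_\loc)$; this is precisely why the critical exponent $\gamma = \tfrac{2p}{p-2}$---and not some weaker variant---is required in the hypothesis. Once strong $L^2$ convergence of $r^2_\epsilon$ is secured, Cauchy--Schwarz in $(t,x)$ forces the cross term to zero, and the residual $(r^2_\epsilon)^2$ term is controlled the same way. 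Passage to the limit $\epsilon \to 0$ in the mollified weak form tested against $\varphi$ then follows by dominated convergence for the Lebesgue integrals, Burkholder--Davis--Gundy for the martingale contribution (after a stopping-time localisation if needed), and inheritance of the continuous $\mathcal{F}_t$-semimartingale property of $t\mapsto \langle\Gamma(f(t)),\varphi\rangle$ from the smooth approximants.
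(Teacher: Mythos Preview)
Your decomposition is algebraically correct, but the estimate you rely on at the crucial step is false. You assert that $\Div_\sigma f^\epsilon$ is bounded in $L^2_t(L^2_\loc)$, and then invoke Cauchy--Schwarz to kill the cross term $\Gamma''(f^\epsilon)\,\Div_\sigma f^\epsilon\cdot r^2_\epsilon$. But $\Div_\sigma f^\epsilon = \sigma\cdot\nabla f^\epsilon + (\Div\sigma)f^\epsilon$, and the first summand contains a full spatial derivative of the mollification of a function that is merely in $L^p$; its $L^2_\loc$ norm is generically of order $\epsilon^{-1}$. (Take $\sigma=e_1$ constant: then $\Div_\sigma f^\epsilon=\partial_1 f^\epsilon$, which is not uniformly bounded in $L^2$ unless $f\in H^1$.) So the product $\Div_\sigma f^\epsilon\cdot r^2_\epsilon$ is an $\infty\cdot 0$ indeterminate form and your Cauchy--Schwarz argument collapses. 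The same difficulty infects your treatment of $r^1_\epsilon$: since $r^1_\epsilon=\tfrac12\partial_i\partial_j\bigl[\sigma_i\sigma_j f^\epsilon-(\sigma_i\sigma_j f)_\epsilon\bigr]$, it is a second-order distribution and does not lie in $L^1_\loc$ at all, let alone converge there.

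The paper does \emph{not} split the remainder into ``renormalized equation for $f^\epsilon$'' plus commutators that vanish. Instead it keeps the full drift remainder $R^2_\epsilon=\Gamma'(f_\epsilon)(\Lgen^*_\sigma f)_\epsilon-\Lgen^*_\sigma\Gamma(f_\epsilon)+\tfrac12\Gamma''(f_\epsilon)\bigl((\Div_\sigma f)_\epsilon\bigr)^2$ intact and proves an algebraic \emph{cancellation identity} expressing $R^2_\epsilon$ solely in terms of the quantities
\[
T_{\sigma,\epsilon}(f)=\nabla_\sigma f_\epsilon-(\Div_\sigma f)_\epsilon,\qquad
S_{\sigma,\epsilon}(f)=\Lgen_\sigma f_\epsilon-\nabla_\sigma(\Div_\sigma f)_\epsilon+(\Lgen^*_\sigma f)_\epsilon,
\]
together with $R^1_\epsilon$. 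These admit integral representations with the difference $\sigma(x)-\sigma(y)$ against derivatives of the kernel, so they are uniformly bounded by $\|\nabla\sigma\|_{L^\gamma}\|f\|_{L^p}$ and $\|\nabla\sigma\|_{L^\gamma}^2\|f\|_{L^p}$ respectively, and they converge not to zero but to $(\Div\sigma)f$ and $\tfrac12(\partial_i\sigma_j\partial_j\sigma_i+(\Div\sigma)^2)f$. In the course of deriving this identity the dangerous $\nabla_\sigma f_\epsilon$ contributions coming from the It\^o correction and from $\Lgen_\sigma f_\epsilon-(\Lgen^*_\sigma f)_\epsilon$ combine into the exact derivative $\nabla_\sigma\bigl(\Gamma'(f_\epsilon)T_{\sigma,\epsilon}(f)\bigr)$, which after integration by parts is harmless. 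This cancellation is the heart of the proof, and your scheme---which separates the pieces before the cancellation can occur---cannot reproduce it.
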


\subsection{Regularization Lemmas}
As in the deterministic theory, commutators of vector field operations with smoothing play an important role in the renormalization theory. Indeed identifying the correct commutators is crucial for simplifying certain remainders in an efficient manner.

We start by considering $\eta: \R^n \to \R$ a smooth, positive, symmetric function with support in the ball of radius $1$ and with unit integral. For each $\ep>0$ we denote by $\eta_\ep$ the rescaled function (mollifier) by
\begin{equation}
  \eta_\ep(x) = \ep^{-n}\eta(\ep^{-1}x).
\end{equation}
For any function $F:\R^n \to \R$, we denote it's convolution with $\eta_\ep$ (or mollifification) by
\begin{equation}
  f_\ep = (f)_\ep = \eta_\ep \star f.
\end{equation}
Define the following quantities
\begin{equation}
  \begin{aligned}
    T_{\sigma,\ep}(f) &:= \nabla_\sigma f_\ep(x) - (\Div_\sigma f)_\ep(x)\\
    &= \int_{\R^n}\nabla\eta_\ep(x - y)\cdot(\sigma(x) - \sigma(y))f(y)\,\dy.
  \end{aligned}
\end{equation}
and
\begin{equation}
  \begin{aligned}
   S_{\sigma,\ep}(f)(x) &:= \Lgen_\sigma f_\ep(x) - \nabla_\sigma(\Div_\sigma f)_\ep(x) + (\Lgen_\sigma^*f)_\ep(x)\\
   &= \frac{1}{2}\int_{\R^n} \nabla^2\eta_\ep(x-y):(\sigma(x) - \sigma(y))^{\tensor 2}f(y)\,\dy.
  \end{aligned}
\end{equation}

Note that the quantity $T_{\sigma,\ep}(f)$ is also studied in \cite{Di_Perna1989-pu} and differs from the usual `commutator' by a divergence term. As a consequence, instead of $T_{\sigma,\ep}(f)$ vanishing as $\ep\to 0$ it will converge precisely to the divergence term that is excluded. Likewise, the quantity $S_{\sigma,\ep}(f)$, while (to the authors knowledge) not consider in the literature, has a similar structure with a non-trivial limit as $\ep\to 0$.

\begin{lem}\label{lem:commutator-Lemma}
  Let $f\in L^p_{\loc}$ and $\sigma\in W^{1,q}_{\loc}$, for $p,q \in [1,\infty]$. Then as $\ep\to 0$
  \begin{equation}
    T_{\sigma,\ep}(f) \to (\Div \sigma) f\quad \text{in}\quad L^r_{\loc},\quad\text{for}\quad  \tfrac{1}{r} = \tfrac{1}{q} + \tfrac{1}{p}
  \end{equation}
and 
\begin{equation}
   S_{\sigma,\ep}(f)  \to \frac{1}{2}(\partial_i\sigma_j\partial_j\sigma_i + (\Div \sigma)^2)f \quad\text{in}\quad L^r_{\loc}, \quad\text{for}\quad \tfrac{1}{r} = \tfrac{2}{q} + \tfrac{1}{p}.
\end{equation}
Moreover for any compact $K\subseteq \R^n$
\begin{equation}\label{eq:T-bound}
  \|T_{\sigma,\ep}(f)\|_{L^r(K)}\leq \|\nabla\sigma\|_{L^q(K)}\|f\|_{L^p(K)},\quad\text{for}\quad  \tfrac{1}{r} = \tfrac{1}{q} + \tfrac{1}{p}
\end{equation}
\begin{equation}\label{eq:S-bound}
  \|S_{\sigma,\ep}(f)\|_{L^r(K)} \leq \|\nabla \sigma\|_{L^q(K)}^2 \|f\|_{L^p(K)}, \quad\text{for}\quad \tfrac{1}{r} = \tfrac{2}{q} + \tfrac{1}{p}.
\end{equation}
\end{lem}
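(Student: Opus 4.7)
The strategy is the classical DiPerna--Lions commutator argument, adapted to cover both first- and second-order scaled differences in a single unified framework. Change variables $y = x - \ep z$, so that $\nabla^k\eta_\ep(x-y) = \ep^{-n-k}(\nabla^k\eta)(z)$ and $dy = \ep^n\,dz$; this converts the integrals into
\[
T_{\sigma,\ep}(f)(x) = \int \nabla\eta(z)\cdot A_\ep(x,z)\, f(x-\ep z)\,dz,\quad S_{\sigma,\ep}(f)(x) = \tfrac{1}{2}\int \nabla^2\eta(z):A_\ep(x,z)^{\otimes 2}\, f(x-\ep z)\,dz,
\]
where the fundamental theorem of calculus supplies
\[
A_\ep(x,z) := \frac{\sigma(x) - \sigma(x-\ep z)}{\ep} = \int_0^1 z\cdot\nabla\sigma\bigl(x-(1-s)\ep z\bigr)\,ds.
\]
Since $\supp\eta\subset\overline{B}_1$, the factor $A_\ep(x,z)$ is controlled pointwise by $|z|$ times a local mean of $|\nabla\sigma|$ on a ball of radius $\ep$ centered near $x$.

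The uniform estimates \eqref{eq:T-bound} and \eqref{eq:S-bound} are then read off immediately. Applying Minkowski's inequality in $z$ (and $s$) to pull the $L^r(K)$-norm inside, followed by H\"{o}lder in $x$ with exponents $(q,p)$ for $T$ and $(q/2,p)$ for $S$, matches the relations $1/r = 1/q + 1/p$ and $1/r = 2/q + 1/p$ and majorizes the integrand by $|\nabla\eta(z)||z|\,\|\nabla\sigma\|_{L^q(K_\ep)}\|f\|_{L^p(K_\ep)}$ or its quadratic analogue, with $K_\ep := K + \overline{B}_\ep$. Integrating $z$ against $\nabla\eta$ or $\nabla^2\eta$ absorbs the constants, and the stated bounds with the same $K$ on both sides follow after harmlessly enlarging $K$ to a fixed neighborhood valid for all small $\ep$.

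For convergence, I would first handle the case of smooth $\sigma$: then $A_\ep(x,z) \to z\cdot \nabla\sigma(x)$ uniformly on compacts, dominated convergence lets the limit pass inside both integrals, and the task reduces to evaluating polynomial moments of $\eta$. The identities
\[
\int z_k\,\partial_i\eta(z)\,dz = -\delta_{ik}, \qquad \int z_k z_l\,\partial_i\partial_j\eta(z)\,dz = \delta_{ik}\delta_{jl} + \delta_{il}\delta_{jk}
\]
follow from $\int\eta = 1$ by one and two integrations by parts, and upon contraction with $\partial_k\sigma_i$ or $\partial_k\sigma_i\partial_l\sigma_j$ reproduce the stated limits $(\Div\sigma)f$ and $\tfrac12((\Div\sigma)^2 + \partial_i\sigma_j\partial_j\sigma_i)f$. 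The general case $\sigma\in W^{1,q}_\loc$ with $q<\infty$ and $f\in L^p_\loc$ then follows by density: mollify $\sigma\to \sigma_\delta$ and $f\to f_\delta$, use the uniform bounds just proved as an equicontinuity ingredient in $\sigma$ to interchange the $\ep\to 0$ and $\delta\to 0$ limits, and invoke strong continuity of translations in $L^q$ and $L^p$ to pass $\delta\to 0$ in the explicit right-hand sides.

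The main obstacle, and the step most specific to this paper rather than to the classical theory, is the second-order analysis. A priori the factor $\ep^{-2}$ from $\nabla^2\eta_\ep$ looks as though it should require control on $\nabla^2\sigma$; the saving point is that the FTC representation of $A_\ep$ contributes a factor $\ep|z|$ per copy, so $A_\ep^{\otimes 2}$ absorbs the full $\ep^{-2}$ while using only \emph{one} derivative of $\sigma$. Equally decisive is the algebraic shape of the tensor moment $\delta_{ik}\delta_{jl} + \delta_{il}\delta_{jk}$: contracting against $\partial_k\sigma_i\partial_l\sigma_j$ produces exactly the two independent quadratic invariants $(\Div\sigma)^2$ and $\partial_i\sigma_j\partial_j\sigma_i$, explaining transparently why these two terms (and only these) appear on the right-hand side of the renormalized equation in Theorem~\ref{thm:rough-diff-renorm-theorm}.
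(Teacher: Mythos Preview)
Your approach is correct and essentially matches the paper's: the change of variables $y=x-\ep z$, the FTC representation of the difference quotient, Minkowski--H\"older for the uniform bounds, and the moment identities for $\nabla\eta$ and $\nabla^2\eta$ are exactly what the paper does. The only genuine difference is in the convergence step: rather than reducing to smooth $\sigma$ and invoking density, the paper argues directly for $\sigma\in W^{1,q}_\loc$ by splitting off the first-order Taylor remainder $R_w(x)=\sigma(x)-\sigma(x-w)-\nabla\sigma(x)\cdot w$, bounding $\|R_w\|_{L^q(K)}\le \ep\sup_{|y|<\ep}\|\delta_y\nabla\sigma\|_{L^q(K)}$ via continuity of translations, and then treating the main term $\nabla\sigma:(M_\ep\star f)$ with $M_\ep(x)=x\otimes\nabla\eta_\ep(x)$ as a mollifier limit. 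Your density route is equally valid and arguably more elementary; the paper's route avoids the extra approximation layer and makes the dependence on $\|\delta_y\nabla\sigma\|_{L^q}$ explicit. Both approaches share the same limitation at $q=\infty$ (continuity of translations fails, so one must argue separately via the Lipschitz bound and the product-limit lemma), and you are right to flag the restriction $q<\infty$.
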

\begin{proof}
  Both bounds (\ref{eq:T-bound}) and (\ref{eq:S-bound}) follow readily from writing
 \begin{equation}
   \sigma(x) - \sigma(y) = \int_{0}^1 \nabla\sigma(y + \lambda(x-y))\cdot(x-y)\dee \lambda
 \end{equation}
 and applying generalized Minkowski and Holder's inequality to $T_{\sigma,\ep}(f)$ and $S_{\sigma,\ep}(f)$ along with the fact that $x\tensor \nabla \eta_\ep(x)$ and $x^{\tensor 2}\tensor \nabla^2 \eta_\ep(x)$ are uniformly bounded in $L^1_{\loc}$
 
  To show the limits, we study $T_{\sigma,\ep}(f)$ first. Define for each $x,w\in\R^n$ the quantity
  \begin{equation}
    \begin{aligned}
      R_w(x) &:= \sigma(x) - \sigma(x-w) - \nabla\sigma(x)\cdot w\\
      &= \int_0^1 (\nabla\sigma(x + (\lambda-1) w) - \nabla\sigma(x))\cdot w\, \dee\lambda,
    \end{aligned}
  \end{equation}
  so that we can write 
  \begin{equation}
     T_{\sigma,\ep}(f)(x) = \int_{\R^n}\nabla\eta_\ep(x-y)\cdot(\nabla\sigma(x)\cdot(x-y))f(y)\dy + \int_{\R^n} \nabla \eta_\ep(y)R_y(x) f(x-y)\dy
  \end{equation}
Since $\sigma\in W^{1,q}_{\loc}$ we have that if $|w|< \ep$, then for any compact $K\subseteq \R^n$
  \begin{equation}
    \|R_w(x)\|_{L^q(K)} \leq \ep \sup_{|y|<\ep}\|\delta_y \nabla \sigma\|_{L^q_{x}(K)},
  \end{equation}
  where $\delta_yh(x)= h(x+y) - h(x)$ denotes the difference of for some function $h$ and it's translation by $y$. Using the above bound, and the fact that $\ep \|\nabla\eta_\ep\|_{L^1}$ is uniformly bounded in $\ep$, we find for 
\begin{equation}
\begin{aligned}
  \left\|\int_{\R^n} \nabla \eta_\ep(y)R_y(x) f(x-y)\dy\right\|_{L^r(K)} &\leq \left(\int_{\R^n} \nabla \eta_\ep(y) \,\|R_y\|_{L^q(K)}\dy\right)\|f\|_{L^p(K)}\\
&\leq \ep\|\nabla\eta_\ep\|_{L^1}\sup_{|y|<\ep}\|\delta_{y}\nabla\sigma\|_{L^q(K)}\|f\|_{L^p(K)}\\
& \to 0 \quad \text{as}\quad \ep \to 0.
\end{aligned}
\end{equation}
for any compact set $K\subset \R^n$. Indeed this implies that for each $x\in \R^n$, and $r$ satisfying $\tfrac{1}{r} = \tfrac{1}{q} + \tfrac{1}{p}$,
  \begin{equation}
      T_{\sigma,\ep}(f)(x) = \nabla\sigma(x):(M_\ep\star f) + o(1)_{L^r_{\loc}}
  \end{equation}
  where $M_\ep(x) = x\tensor\nabla\eta_\ep(x)$. Furthermore, using the fact that each component of $M_\ep(x) = \ep^{-n}M(\ep^{-1}x)$ is a symmetric approximation of a delta function, we can use the standard properties of mollifiers to find
  \begin{equation}
    M_\ep \star f \to \Big(\int_{\R^n} x\tensor\nabla\eta(x) \dx\Big)\, f \quad\text{a.e. on }\quad [0,T]\times \R^n.
  \end{equation}
  Integration by parts, and the properties of $\eta$ give the identity
  \begin{equation}
    \int_{\R^n} x\tensor\nabla \eta(x)\dx = I.
  \end{equation}
  Since $\nabla\sigma \in L^q_{\loc}$ and $M_\ep \star f$ is uniformly bounded in $L^p_{\loc}$, as $\ep \to 0$, the following convergence holds in $L^r_{\loc}$
  \begin{equation}
    T_{\sigma,\ep}(f)(f) \to (\nabla \sigma : I) f = (\Div \sigma) f.
  \end{equation}
  Note that in the case $p = \infty$ we must resort to the product limit Lemma \ref{lem:prod-lim}.
  
  Next we study $S_{\sigma,\ep}(f)$. A similar argument to the one just given for $T_{\sigma,\ep}(f)$ also implies that since $\sigma \in W^{1,q}_{\loc}$, then for each $x\in\R^n$ and $r$ such that $\tfrac{1}{r} = \tfrac{2}{q} + \tfrac{1}{p}$,
  \begin{equation}
    \begin{aligned}
      S_{\sigma,\ep}(f)(x) &= \frac{1}{2}\int_{\R^n} \nabla^2\eta_\ep(x-y):(\nabla\sigma(x)\cdot(x-y))^{\tensor 2}f(y)\dy + o(1)_{L^r_{\loc}}\\
&= \frac{1}{2}\partial_i\sigma_k(x)\partial_j\sigma_\ell(x)\big(M_{ij,\ep}^{kl}\star f)(x) + o(1)_{L^r_{\loc}},
    \end{aligned}
  \end{equation}
  where $M_{ij,\ep}^{k\ell}(x) = x_ix_j\partial_k\partial_\ell \eta_\ep(x)$. Furthermore, since each $M_{ij,\ep}^{k\ell}(x)$ is a symmetric approximation of a delta function we have
 \begin{equation}
   M_{ij,\ep}^{k\ell}\star f \to \left(\int_{\R^n}x_ix_j\partial_k\partial_\ell \eta\dx\right) f \quad\text{a.e. on }\quad [0,T]\times \R^n.
 \end{equation}
 Using the identity,
 \begin{equation}
   \int_{\R^n}x_ix_j\partial_k\partial_\ell \eta(x)\dx = \delta_{ij}\delta_{k\ell} + \delta_{ik}\delta_{j\ell},
 \end{equation}
 and the facts that $\nabla \sigma \in L^q_{\loc}$, and $M_{ij,\ep}^{k\ell}\star f$ is uniformly bounded in $L^p_{\loc}$, the following convergence holds in $L^r_{\loc}$ as $\ep \to 0$
 \begin{equation}
   S_{\sigma,\ep}(f) \to \frac{1}{2}(\delta_{ij}\delta_{k\ell} + \delta_{ik}\delta_{j\ell}) \partial_i\sigma_k\partial_j\sigma_\ell f = \frac{1}{2}(\partial_i\sigma_j\partial_j\sigma_i + (\Div\sigma)^2)f.
 \end{equation}

\end{proof}

\subsection{Proof of Theorem \ref{thm:main-renorm-result}}
\begin{proof}
\noindent {\bf Step 1 (Regularization)}:

\vspace{1em}

The first step is to mollify the stochastic continuity equation. For a weak $L^p$ solution this can be done by choosing for each $\ep >0$ and $x$ a text function $\eta_\ep(x-\cdot)$. This gives 
  \begin{equation}
    f_\ep(t) = f_\ep(0) + \int_0^t(\Lgen_\sigma^*f(s))_\ep\ds - \int_0^t(\Div_\sigma f(s))_\ep\dee W(s).
  \end{equation}
Then, using It\^{o}s formula applied to $\Gamma(f_\ep)$, we have
\begin{equation}
  \begin{aligned}
    &\Gamma(f_\ep(t)) = \Gamma(f_\ep(0)) + \int_0^t\Gamma^\prime(f_\ep(s))(\Lgen_\sigma^* f(s))_\ep\ds -  \int_0^t\Gamma^\prime(f_\ep(s))(\Div_\sigma f)_\ep \dee W(s)\\
    &\hspace{1in}+ \frac{1}{2}\int_0^t\Gamma^{\prime\prime}(f_\ep(s))(\Div_\sigma f)_\ep^2\ds
    \end{aligned}
  \end{equation}
We can then write the equation above as a stochastic continuity equation for $\Gamma(f)$ plus some remainders. Specifically, we have
\begin{equation}\label{eq:stochastic-renorm-remainder}
  \begin{aligned}
    &\Gamma(f_\ep(t)) = \Gamma(f_\ep(0)) + \int_0^t\Lgen_\sigma^*\Gamma(f_\ep(s))\ds -  \int_0^t\Div_\sigma \Gamma(f_\ep(s)) \dee W(s)\\
    &\hspace{1in}+ \int_0^t R^1_\ep(f(s))\dee W(s) + \int_0^t R_\ep^2(f(s))\ds.
  \end{aligned}
\end{equation}
where the remainders $R^1_\ep(f)$ and $R^2_\ep(f)$ are given by
\begin{equation}
\begin{aligned}
  &R^1_\ep(f) = \Div_\sigma\Gamma(f_\ep) - \Gamma^\prime(f_\ep)(\Div_\sigma f)_\ep\\
  &R^2_\ep(f) = \Gamma^\prime(f_\ep)(\Lgen_\sigma^* f)_\ep - \Lgen^*_\sigma\Gamma(f_\ep) + \frac{1}{2}\Gamma^{\prime\prime}(f_\ep)(\Div_\sigma f)_\ep^2
\end{aligned}
\end{equation}

\noindent{\bf Step 2 (Cancellation Identity):}

In order to complete the proof we need to show that as $\ep \to 0$, the remainders $R_\ep^1(f)$ and $R^2_\ep(f)$ converge to the correct terms on the right-hand side of (\ref{eq:renormalized-form-simple}). To show this, we will need to write $R_\ep^1(f)$ and $R_\ep^2(f)$ in terms of $T_{\sigma,\ep}(f)$ and $S_{\sigma,\ep}(f)$ and use Lemma \ref{lem:commutator-Lemma}. In fact, we will show the following identities
  \begin{equation}\label{eq:R1-id}
    R^1_\ep(f) = \Gamma^\prime(f_\ep)T_{\sigma,\ep}(f) - \Div\sigma \Gamma(f_\ep)
  \end{equation}
  and 
  \begin{equation}\label{eq:R2-id}
    \begin{aligned}
      &R^2_\ep(f)= \Gamma^\prime(f_\ep)S_{\sigma,\ep}(f) - \frac{1}{2}\Gamma(f_\ep)\partial_i\sigma_j\partial_j\sigma_i -\frac{1}{2}\Gamma^\prime(f_\ep)(\Div\sigma) T_{\sigma,\ep}(f)\\
      &\hspace{1in}+\frac{1}{2}\Gamma^{\prime\prime}(f_\ep)(T_{\sigma,\ep}(f))^2+ \Div_\sigma\big(R^1_\ep(f)\big) - \frac{1}{2}(\Div \sigma)R^1_\ep(f).
  \end{aligned}
  \end{equation}

  It is important to remark that this computation involves quantities, like $\partial_i\partial_j(\sigma_i\sigma_j)$ and $\nabla_\sigma\Div \sigma$ which are not well-defined functions given the regularity assumptions on $\sigma$. However, as they are well-defined distributions and always multiplied by a smooth function (for $\ep >0$) the computations are well defined in the sense distribution for $\sigma \in L^2_t(W^{1,\gamma}_{\loc})$.

The proof of the identity (\ref{eq:R1-id}) for $R^1_\ep(f)$ is obvious given the definition of $T_{\sigma,\ep}(f)$. We focus on identity (\ref{eq:R2-id}) $R^2_\ep(f)$ and begin by expanding the term for $\Lgen^*_\sigma\Gamma(f)$,
\begin{equation}
  \begin{aligned}
    \Lgen^*_\sigma\Gamma(f_\ep) = \frac{1}{2}\partial_i\partial_j(\sigma_i\sigma_j)\Gamma(f_\ep) + \Gamma^\prime(f_\ep) \Div_\sigma \sigma \cdot \nabla f_\ep + \Gamma^{\prime\prime}(f_\ep) (\nabla_\sigma f_\ep)^2 + \Gamma^{\prime}(f_\ep)\Lgen_\sigma f_{\ep}
  \end{aligned}
\end{equation}
So that $R^2_\ep$ becomes
\begin{equation}
  \begin{aligned}
    &R^2_\ep(f) = -\frac{1}{2}\partial_i\partial_j(\sigma_i\sigma_j) \Gamma(f_\ep) - \Gamma^\prime(f_\ep) \Div_\sigma\sigma \cdot \nabla f_\ep  - \Gamma^{\prime}(f_\ep)(\Lgen_\sigma f_{\ep} -(\Lgen^*_\sigma f)_\ep)\\
    &\hspace{1in}+ \frac{1}{2}\Gamma^{\prime\prime}(f_\ep)((\Div_\sigma f)_\ep^2- (\nabla_\sigma f_\ep)^2)
  \end{aligned}
\end{equation}
We can write several expressions in terms of commutators
\begin{equation}
  \begin{aligned}
    \Lgen_\sigma f_\ep - (\Lgen^*_\sigma f)_\ep &= -S_{\sigma,\ep}(f) + 2\Lgen_\sigma f_\ep - \nabla_\sigma(\Div_\sigma f)_\ep\\
    &= -S_{\sigma,\ep}(f) + 2\Lgen_\sigma f_\ep - \nabla_\sigma\nabla_\sigma f_\ep + \nabla_\sigma T_{\sigma,\ep}(f)\\
    &= -S_{\sigma,\ep}(f) - \nabla_\sigma\sigma\cdot \nabla f_\ep + \nabla_\sigma T_{\sigma,\ep}(f)
    \end{aligned}
  \end{equation}
  and
  \begin{equation}
    \begin{aligned}
      (\Div_\sigma f)_\ep^2- (\nabla_\sigma f_\ep)^2 &= ( \nabla_\sigma f_\ep - T_{\sigma,\ep}(f))^2 -(\nabla_\sigma f_\ep)^2\\
      &= -2(\nabla_\sigma f_\ep) T_{\sigma,\ep}(f) + T_{\sigma,\ep}(f)^2
    \end{aligned}
  \end{equation}
  Therefore we have
  \begin{equation}
    \begin{aligned}
      &\frac{1}{2}\Gamma^{\prime\prime}(f_\ep)\big((\Div_\sigma f)_\ep^2- (\nabla_\sigma f_\ep)^2\big) - \Gamma^{\prime}(f_\ep)(\Lgen_\sigma f_{\ep} -(\Lgen^*_\sigma f)_\ep)\\
      &\hspace{.5in}= \Gamma^{\prime}(f_\ep)S_{\sigma,\ep}(f) +\frac{1}{2}\Gamma^{\prime\prime}(f_\ep)(T_{\sigma,\ep}(f))^2 - \Gamma^{\prime\prime}(f_\ep)(\nabla_\sigma f_\ep) T_{\sigma,\ep}(f) - \Gamma^\prime(f_\ep)\nabla_\sigma T_{\sigma,\ep}(f)\\
      &\hspace{1in}+  \Gamma^\prime(f_\ep)\nabla_\sigma\sigma\cdot \nabla f_\ep\\
      &\hspace{.5in}= \Gamma^\prime(f_\ep)S_{\sigma,\ep}(f) +\frac{1}{2}\Gamma^{\prime\prime}(f_\ep)(T_{\sigma,\ep}(f))^2 + \nabla_\sigma\big(\Gamma^\prime(f) T_{\sigma,\ep}(f)\big)+  \Gamma^\prime(f_\ep)\nabla_\sigma\sigma\cdot \nabla f_\ep
    \end{aligned}
  \end{equation}
  The remainder becomes
  \begin{equation}
    \begin{aligned}
      &R^2_\ep(f) = \Gamma^\prime(f_\ep)S_{\sigma,\ep}(f) +\frac{1}{2}\Gamma^{\prime\prime}(f_\ep)(T_{\sigma,\ep}(f))^2 + \nabla_\sigma\big(\Gamma^\prime(f_\ep) T_{\sigma,\ep}(f)\big)+  \Gamma^\prime(f_\ep)\nabla_\sigma\sigma\cdot \nabla f_\ep\\
      &\hspace{1in}-\frac{1}{2}\partial_i\partial_j(\sigma^k_i\sigma_j^k) \Gamma(f_\ep) - \Gamma^\prime(f_\ep) \Div_\sigma\sigma \cdot \nabla f_\ep
  \end{aligned}
\end{equation}
Next we write
\begin{equation}
  \begin{aligned}
    \Gamma^\prime(f_\ep)\nabla_\sigma\sigma\cdot \nabla f_\ep - \Gamma^\prime(f_\ep) \Div_\sigma a \cdot \nabla f_\ep &= - \Div{\sigma}\nabla_\sigma \Gamma(f_\ep)\\
    &= -\nabla_\sigma(\Div\sigma \Gamma(f_\ep)) + \nabla_\sigma\Div\sigma \Gamma(f_\ep),
\end{aligned}
\end{equation}
and use the fact that $\frac{1}{2}\partial_i\partial_j(\sigma_i\sigma_j) = \tfrac{1}{2}\partial_i\sigma_j\partial_j\sigma_i + \tfrac{1}{2}(\Div\sigma)^2 + \nabla_\sigma\Div \sigma$ to simplify the remainder to
  \begin{equation}
    \begin{aligned}
 &R^2_\ep(f) = \Gamma^\prime(f_\ep)S_{\sigma,\ep}(f) +\frac{1}{2}\Gamma^{\prime\prime}(f_\ep)(T_{\sigma,\ep}(f))^2 + \nabla_\sigma\big(\Gamma^\prime(f_\ep) T_{\sigma,\ep}(f)\big) -\nabla_\sigma(\Div\sigma \Gamma(f_\ep))\\
      &\hspace{1in} -\frac{1}{2}\Gamma(f_\ep)\partial_i\sigma_j\partial_j\sigma_i - \frac{1}{2}\Gamma(f_\ep)(\Div\sigma)^2.
  \end{aligned}
\end{equation}
Identity (\ref{eq:R2-id}) now follows by writing $\nabla_\sigma = \Div_\sigma - \Div\sigma$ in two of the terms above, grouping like terms, and recognizing the appearance of $R^1_\ep(f)$.

\vspace{1em}

\noindent{\bf Step 3 (Passing to the limit):}

\vspace{1em}

We need to pass the limit as $\ep \to 0$ in the weak, time integrated form of (\ref{eq:stochastic-renorm-remainder}) which for each $\varphi \in C^\infty_c(\R^n)$ becomes
\begin{equation}\label{eq:mollified-weak-form}
  \begin{aligned}
    &\langle \Gamma(f_\ep(t)),\varphi\rangle  = \langle\Gamma(f_\ep(0)),\varphi\rangle + \int_0^t \langle \Gamma(f_\ep(s)), \Lgen_\sigma\varphi\rangle \ds + \int_0^t \langle \Gamma(f_\ep(s)),\nabla_\sigma \varphi \rangle\dee W(s)\\
    &\hspace{1in}+ \int_0^t \langle R^1_\ep(f(s)), \varphi\rangle \dee W(s) + \int_0^t \langle R^2_\ep(f(s)),\varphi\rangle \ds .
  \end{aligned}
\end{equation}

The standard properties of mollifiers imply that for $\P\tensor \dt$ almost every $(\omega,t)\in \Omega\times [0,T]$, $f_\ep(t) \to f(t)$ in $L^p_{\loc}$ and $f_\ep \to f$ pointwise $\P\tensor\dt\tensor\dx$ almost everywhere on $\Omega\times[0,T]\times\R^n$. It is a simple matter to show that this, the boundedness of $\Gamma(z)$ and the integrability conditions on $\sigma$ imply that as $\ep \to 0$,
\begin{equation}
  \begin{aligned}
&\langle \Gamma(f_\ep(0)), \varphi\rangle \to \langle\Gamma(f_0),\varphi\rangle,\\
    &\langle\Gamma(f_\ep),\varphi\rangle \to \langle \Gamma(f),\varphi\rangle,\quad\text{in}\quad L^1(\Omega\times[0,T])\\
    &\langle \Gamma(f_\ep),\Lgen_\sigma\varphi\rangle \to \langle \Gamma(f),\Lgen_\sigma\varphi\rangle,\quad\text{in}\quad L^1(\Omega\times [0,T])
    \end{aligned}
  \end{equation}
while for the term in the stochastic integral
  \begin{equation}
    \langle \Gamma(f_\ep),\nabla_\sigma\varphi\rangle \to\langle \Gamma(f),\nabla_\sigma\varphi\rangle \quad \text{in} \quad L^2(\Omega\times[0,T]).
  \end{equation}
Consequently we may pass the limit as $\ep \to 0$ in the first four terms of equation (\ref{eq:mollified-weak-form}).

 What remain are the terms involving $R_\ep^1(f)$ and $R^2_\ep(f)$. Lemma \ref{lem:commutator-Lemma}, the fact that $\sigma \in L^2_t(W^{1,\gamma}_{\loc})$, $\gamma = 2p/(p-2)$ and the strong convergence properties of $f_\ep \to f$ are more than enough to conclude
 \begin{equation}
   \langle R^1_\ep(f),\varphi\rangle \to \langle G(f)\Div \sigma,\varphi \rangle, \quad \P\tensor \dt\text{ almost everywhere}.
\end{equation}
Moreover, the bound provided in Lemma \ref{lem:commutator-Lemma}, and the fact that $\Gamma(z)$ and $\Gamma^\prime(z)$ are bounded functions give the estimate
\begin{equation}
 \sup_\ep |\langle R^1_\ep(f(t)),\varphi \rangle|^2 \leqc \|\nabla\sigma(t)\|_{L^{\gamma}(\supp\varphi))}^2(1+ \|f\|^2_{L^\infty_t(L^p)})
\end{equation}
where $\supp\varphi$ is the support of $\varphi$. Since the right-hand side of the above inequality is integrable with respect to $\P\tensor \dt$ on $\Omega\times[0,T]$, the dominated convergence theorem implies that
  \begin{equation}
    \langle R^1_\ep(f),\varphi\rangle \to \langle (\Div \sigma) G(f), \varphi\rangle \quad\text{in}\quad L^2(\Omega\times[0,T]),
  \end{equation}
  whereby we may pass the limit in the stochastic integral for the fourth term on the right-hand side of (\ref{eq:stochastic-renorm-remainder}).

  The last term $R^2_\ep(f)$, though complicated, is straightforward and can be treated in a similar manner as $R^1_\ep(f)$. Indeed similar arguments to those above show that
  \begin{equation}
    \begin{aligned}
  &\langle R^1_\ep(f), \sigma\cdot \nabla\varphi\rangle \to \langle (\Div\sigma)G(f), \sigma\cdot \nabla\varphi\rangle \quad \text{in}\quad L^1(\Omega\times[0,T]),\\
  &\frac{1}{2}\langle (\Div\sigma) R^1_{\ep}(f),\varphi\rangle \to \frac{1}{2}\langle G(f) (\Div\sigma)^2, \varphi\rangle \quad \text{in}\quad L^1(\Omega\times[0,T]),\\
  &\frac{1}{2}\langle\Gamma^{\prime\prime}(f_\ep)(T_{\sigma,\ep}(f))^2,\varphi\rangle \to \frac{1}{2}\langle fG^\prime(f)(\Div\sigma)^2,\varphi\rangle \quad \text{in}\quad  L^1(\Omega\times[0,T]).
  \end{aligned}
\end{equation}

The only term left to study in $R^2_\ep(f)$ is the one involving $S_{\sigma,\ep}(f)$. Of course Lemma \ref{lem:commutator-Lemma} applied to $S_{\sigma,\ep}(f)$ combined with the strong convergence of $f_\ep(t) \to f(t)$ in $L^p$ implies that
\begin{equation}
  \langle \Gamma^\prime(f_\ep)S_{\sigma,\ep}(f),\varphi\rangle \to \frac{1}{2}\langle \Gamma^\prime(f_\ep)(\partial_i\sigma_j\partial_j\sigma_i + (\Div\sigma)^2),\varphi\rangle \quad \P\tensor \dt\text{ almost everywhere}
\end{equation}
as well as the bound
\begin{equation}
  \sup_\ep |\langle \Gamma^\prime(f_\ep(t))S_{\sigma,\ep}(f(t)),\varphi\rangle| \leqc \|\nabla\sigma(t)\|^2_{L^\gamma(\supp \varphi)} \|f\|_{L^\infty_t(L^p)}.
\end{equation}
Again applying dominated convergence gives
\begin{equation}
  \begin{aligned}
    &\left\langle \Gamma^\prime(f_\ep)S_{\sigma,\ep}(f) - \tfrac{1}{2}\Gamma^\prime(f_\ep)(\partial_i\sigma_j\partial_j\sigma_i + (\Div\sigma)^2),\varphi\right\rangle\\
    &\hspace{1in}\to \frac{1}{2}\langle G(f)(\partial_i\sigma_j\partial_j\sigma_i + (\Div\sigma)^2),\varphi\rangle\quad \text{in}\quad L^1(\Omega\times[0,T]).
\end{aligned}
\end{equation}

The above limits can be collected and combined to conclude that
\begin{equation}
  \begin{aligned}
    &\langle R^2_\ep(f),\varphi\rangle \to -\langle G(f)\Div\sigma, \nabla_\sigma\varphi\rangle + \frac{1}{2}\langle G(f)\partial_i\sigma_j\partial_j\sigma_i ,\varphi\rangle\\
    &\hspace{1in}+ \frac{1}{2}\langle H(f) (\Div \sigma)^2,\varphi\rangle\quad\text{in}\quad L^1(\Omega\times [0,T]).
    \end{aligned}
  \end{equation}
  All of these convergences, allow us to pass the limit in each term of (\ref{eq:mollified-weak-form}) in $L^1(\Omega\times[0,T])$ and therefore that equation (\ref{eq:Renorm-weak-form}) holds $\P\tensor \dt$ almost everywhere.
  
  \medskip
  
This completes the proof of Theorem \ref{thm:main-renorm-result}
\end{proof}

\begin{remark}
It is useful to note the term $\frac{1}{2}\Gamma^{\prime\prime}(f_\ep)(T_{\sigma,\ep}(f))^2$ in (\ref{eq:R2-id}) is the one that dictate the $L^2_t(W^{1,2p/(p-2)})$ condition on $\sigma$ (as opposed to $L^2_t(W^{1,2p/(p-1)})$ which is sufficient to obtain all the other limits).
\end{remark}

\subsection{Extensions}
Theorem \ref{thm:main-renorm-result} can be easily generalized to equations with a drift $u$ and a family of noise coefficients $\sigma = \{\sigma^k\}$, 
\begin{equation}\label{eq:stochastic-transport-eq-1}
  \partial_t f + \Div(b f)  -\frac{1}{2}\partial_i\partial_j(\sigma_i^k\sigma_j^k f) + \sum_k\Div(\sigma^k f)\dot{W}^k =0,
\end{equation}
as long as $u$ satisfies the usual regularity requirements of the deterministic DiPerna-Lions theory and $\sigma = \{\sigma^k\}$ satisfy the appropriate summability conditions. In this case, the renormalized form looks like
\begin{equation}\label{eq:Full-renormalized-form}
\begin{aligned}
  &\partial_t\Gamma(f) +\Div(b\,\Gamma(f)) - \frac{1}{2}\partial_i\partial_j(\sigma^k_i\sigma_j^k\Gamma(f)) + \Div(\sigma^k\Gamma(f))\dot{W}^k+ G(f)\Div\sigma^k \dot{W}^k\\
  &\hspace{.5in}+ G(f)\Div b - \Div\big(\sigma^k(\Div\sigma^k) G(f)\big) =  \frac{1}{2}G(f)\partial_i\sigma_j^k\partial_j\sigma_i^k + \frac{1}{2}H(f)(\Div\sigma^k)^2.
\end{aligned}
\end{equation}
The corresponding renormalization result is given below:
\begin{thm}
  \label{thm:main-renorm-result-generalized}
Let $f$, be a weak $L^p$ solution to (\ref{eq:stochastic-transport-eq-1}), with $p\in [2,\infty]$ and 
\begin{equation}
  b\in L^1_t(W^{1,p^\prime}_{\loc}),\quad \sigma^k \in L^2_t(W^{1,\gamma}_{\loc}),\quad \gamma = \frac{2p}{p-2},
\end{equation}
then $f$ is renormalizable.
  \end{thm}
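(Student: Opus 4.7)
The plan is to mimic the three-step proof of Theorem~\ref{thm:main-renorm-result}: mollify the equation, apply It\^{o}'s formula to $\Gamma(f_\ep)$, rewrite the result as a mollified renormalized continuity equation plus remainders via cancellation identities, and then pass $\ep \to 0$ using Lemma~\ref{lem:commutator-Lemma}. The new ingredients relative to Theorem~\ref{thm:main-renorm-result} are the drift term $\Div(bf)$ and the sum over $k$ arising from the family of noise coefficients, both of which fit into the same framework with modest additional bookkeeping.

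First, mollifying the weak form of (\ref{eq:stochastic-transport-eq-1}) against $\eta_\ep(x-\cdot)$ produces, as before, an SPDE for $f_\ep$. Since the $W^k$ are independent, applying It\^{o}'s formula to $\Gamma(f_\ep)$ yields a quadratic variation term $\tfrac12\Gamma''(f_\ep)\sum_k((\Div_{\sigma^k}f)_\ep)^2$. Writing the result in divergence form introduces three families of remainders: a drift remainder
\begin{equation*}
R^0_\ep(f) := \Div(b\Gamma(f_\ep)) - \Gamma'(f_\ep)(\Div(bf))_\ep = \Gamma'(f_\ep) T_{b,\ep}(f) + (\Div b)\Gamma(f_\ep),
\end{equation*}
where $T_{b,\ep}(f) := b\cdot\nabla f_\ep - (\Div(bf))_\ep$ is the classical DiPerna--Lions commutator, and, for each $k$, remainders $R^{1,k}_\ep$ and $R^{2,k}_\ep$ obtained from the identities (\ref{eq:R1-id}) and (\ref{eq:R2-id}) by replacing $\sigma$ with $\sigma^k$. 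Because the drift enters linearly and the diffusion channels enter additively, no cross-terms appear and no new algebraic cancellation identities are required beyond those derived in Section~\ref{sec:renorm-rough-diff}.

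For the passage to the limit, we apply Lemma~\ref{lem:commutator-Lemma} in two regimes. Taking $q = p'$, the integrability $b \in L^1_t(W^{1,p'}_\loc)$ combined with $f \in L^\infty_t(L^p_\loc)$ gives $\tfrac{1}{r} = \tfrac{1}{p'} + \tfrac{1}{p} = 1$, so $T_{b,\ep}(f)$ converges to its limit in $L^1_t(L^1_\loc)$ along with the pointwise dominating bound $\|\nabla b(t)\|_{L^{p'}(\supp\varphi)}\|f\|_{L^\infty_t(L^p_\loc)}$. Combined with $\Gamma(f_\ep)\to \Gamma(f)$ pointwise, boundedness of $\Gamma'$, and the algebraic identity $z\Gamma'(z) = G(z) + \Gamma(z)$, dominated convergence in $L^1(\Omega\times[0,T])$ yields the required limit for $\langle R^0_\ep(f),\varphi\rangle$, producing the drift correction $G(f)\Div b$ in (\ref{eq:Full-renormalized-form}). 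For each fixed $k$, the treatment of $R^{1,k}_\ep$ and $R^{2,k}_\ep$ is identical to that in the proof of Theorem~\ref{thm:main-renorm-result} with $q = \gamma = 2p/(p-2)$; summing over the (finite) index set $\{k\}$ recovers all the remaining terms of (\ref{eq:Full-renormalized-form}).

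The principal technical point, rather than a genuine obstacle, is that the two regularity regimes must be compatible simultaneously with dominated convergence: the drift commutator requires only $b \in L^1_t(W^{1,p'}_\loc)$, matching the classical DiPerna--Lions threshold, while the quadratic-variation-driven term $\tfrac12\Gamma''(f_\ep)(T_{\sigma^k,\ep}(f))^2$ appearing in the analogue of (\ref{eq:R2-id}) forces the sharper $\sigma^k \in L^2_t(W^{1,\gamma}_\loc)$ condition through the Lemma~\ref{lem:commutator-Lemma} bound $\|T_{\sigma^k,\ep}(f)\|_{L^2} \leq \|\nabla \sigma^k\|_{L^\gamma}\|f\|_{L^p}$. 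Since no genuinely new analytic tool is introduced, the proof reduces to a direct adaptation of the scalar case treated in Section~\ref{sec:renorm-rough-diff}.
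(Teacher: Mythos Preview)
Your proposal is correct and follows exactly the paper's own approach: the paper's proof is a terse few-line sketch observing that the extension to a drift $b$ and multiple $\sigma^k$ amounts to introducing the additional DiPerna--Lions commutator $T_{b,\ep}(f)$, applying Lemma~\ref{lem:commutator-Lemma} with $b$ in place of $\sigma$ (so that the relevant exponent is $p'$), and passing to the limit alongside the diffusion remainders already handled in Theorem~\ref{thm:main-renorm-result}. Your write-up is more detailed than the paper's, but the strategy, decomposition, and tools are identical.
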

  \begin{proof}
    The proof is an easy extension of the Theorem \ref{thm:main-renorm-result}. Regulaizing and renormalizing just as in the proof of Theorem \ref{thm:main-renorm-result}, we see that the drift $b$ introduces another commutator $T_{b,\ep}(f) = b\cdot\nabla f_\ep- (\Div(bf))_\ep$, applying Lemma \ref{lem:commutator-Lemma} with $b$ instead of $\sigma$, we obtain
    \begin{equation}
      \langle \Gamma^\prime(f_\ep)T_{b,\ep}(f)- \Gamma(f_\ep)\Div b,\varphi\rangle \to \langle G(f)\Div b,\varphi \rangle \quad \text{in}\quad L^1(\Omega\times[0,T])
    \end{equation}
as long as $b\in L^1([0,T]; W^{1,p^\prime}_{\loc})$. This limit can be passed along with all of the others as in the proof of Theorem \ref{thm:main-renorm-result}.
\end{proof}

\subsection{Existence}
We now turn to the question of existence and uniqueness of weak $L^p$ solutions to (\ref{eq:stochastic-transport-eq-1}). For simplicity, we will consider the case where there are only finite many coefficients $\{\sigma^k\}$ in order to remove any unsightly summability conditions and simplify the presentation. However, all of the results can easily be extended to infinitely many $\{\sigma^k\}$ with the addition of the appropriate summability conditions. 

The main theorems of this section are:
\begin{thm}[Existence]\label{thm:existence}
  Let $p\in [1,\infty)$ and suppose that
  \begin{equation}
    b\in L^1_t(L^{p^\prime}_\loc), \quad \sigma^k \in L^2_t(L^{2p^\prime}_{\loc}),
  \end{equation}
  as well as
  \begin{equation}
    \Div b \in L^1_t(L^\infty), \quad \Div\sigma^k \in L^2_t(L^\infty), \quad \partial_i\sigma^k_j\partial_j\sigma^k_i \in L^1_t(L^\infty),
  \end{equation}
  Then for each stochastic basis $(\Omega,\mathscr{F}, \P, (\mathscr{F}_t), \{W^k\})$ and $f_0 \in L^p_{\loc}$, there exists a weak $L^p$ solution $f \in L^{2p}(L^\infty_t(L^p))$ to (\ref{eq:stochastic-transport-eq-1}).
\end{thm}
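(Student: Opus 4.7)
The approach is the standard regularization-compactness scheme powered by the renormalization machinery of Theorem \ref{thm:main-renorm-result-generalized}. First I would mollify the coefficients and initial data: $b_\ep := b\star \eta_\ep$, $\sigma^k_\ep := \sigma^k\star \eta_\ep$, $f_0^\ep := (f_0 \chi_\ep)\star \eta_\ep$, with $\chi_\ep$ an increasing sequence of smooth compactly-supported cut-offs. For each $\ep>0$ the coefficients are smooth, and Kunita's classical theory \cite{Kunita1986-sr} provides a smooth stochastic flow of diffeomorphisms $X^\ep_t$ together with a classical-in-$x$ solution $f^\ep$ via the push-forward formula (\ref{eq:pushforward-rep}); this $f^\ep$ is a continuous $\mathcal{F}_t$-semimartingale when paired with any $\varphi\in C^\infty_c(\R^n)$.

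The central task is a uniform-in-$\ep$ a priori estimate. Fix a nonnegative $\chi\in C^\infty_c(\R^n)$ with $\chi\equiv 1$ on a compact $K$, and let $\Gamma_R \in C^2_b(\R)$ be a smooth bounded truncation of $z \mapsto |z|^p$. Applying Theorem \ref{thm:main-renorm-result-generalized} to $f^\ep$ (trivially justified for the smooth approximants), testing against $\chi$, taking expectation, and using Burkholder-Davis-Gundy on the martingale part, the $L^\infty$ hypotheses on $\Div b$, $\Div\sigma^k$ and $\partial_i\sigma^k_j\partial_j\sigma^k_i$ ensure that the correction terms $G_R(f^\ep)\Div b$, $G_R(f^\ep)\partial_i\sigma^k_j\partial_j\sigma^k_i$, and $H_R(f^\ep)(\Div\sigma^k)^2$ are pointwise dominated by $C(t)\Gamma_R(f^\ep)$ with $C\in L^1_t$; the divergence-type correction $\Div(\sigma^k(\Div\sigma^k)G_R(f^\ep))$ produces a boundary piece that is controlled on $\supp\nabla\chi$ via the local integrability of $\sigma^k$. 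Gronwall in time, together with $R\to\infty$ via Fatou's lemma, then yields
\begin{equation*}
\E\|f^\ep\|_{L^\infty_t L^p(K)}^{2p} \leq C_K\bigl(1+\|f_0\|_{L^p(\supp\chi)}^{2p}\bigr),\quad \text{uniform in } \ep.
\end{equation*}

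With this estimate I would extract a subsequence $f^\ep \rightharpoonup f$ in the weak-$*$ topology of $L^{2p}(\Omega; L^\infty_t L^p(K))$ for every compact $K$. The convolution estimates give strong convergence $b_\ep \to b$ in $L^1_t L^{p'}_{\loc}$, $\sigma^k_\ep \to \sigma^k$ in $L^2_t L^{2p'}_{\loc}$, and $\sigma^k_{\ep,i}\sigma^k_{\ep,j} \to \sigma^k_i\sigma^k_j$ in $L^1_t L^{p'}_{\loc}$ by Cauchy-Schwarz. Each deterministic integrand in the weak formulation (\ref{eq:time-integrated-weak-form}) is then a strong-weak product whose passage to the limit is routine. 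The limit $f$ inherits the uniform bound and $\mathcal{F}_t$-adaptedness, since the argument stays on the original stochastic basis.

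The main obstacle is passing to the limit in the It\^o integral with only weak convergence of $f^\ep$. This is handled by the continuity of the It\^o isometry: the map $X\mapsto \int_0^\cdot X_s \dee W^k_s$ is a bounded linear operator from $\mathcal{F}_t$-adapted $L^2(\Omega\times[0,T])$ processes to $L^2(\Omega;C([0,T]))$, so weak convergence of the integrands $\langle f^\ep, \sigma^k_\ep\cdot\nabla\varphi\rangle \rightharpoonup \langle f, \sigma^k\cdot\nabla\varphi\rangle$ in $L^2(\Omega\times[0,T])$ transfers to weak convergence of the stochastic integrals in $L^2(\Omega)$; pairing against bounded $\mathcal{F}_T$-measurable test variables identifies the limit. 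The continuous $\mathcal{F}_t$-semimartingale modification of $t \mapsto \langle f(t),\varphi\rangle$ is then read off directly from the integral representation obtained in the limit.
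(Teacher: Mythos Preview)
Your outline is essentially correct and the limit-passing half of the argument (weak compactness, strong--weak products, weak continuity of the It\^o map, semimartingale modification) matches the paper's proof almost verbatim. The genuine difference lies in how you obtain the a~priori estimate. The paper does \emph{not} renormalize to get (\ref{eq:Lp-estimate-existence}); instead it works directly with the stochastic flow for the smooth approximants, proving an explicit formula for the Jacobian $\det\partial\Psi_t$ (Lemma~\ref{lem:dpsi-rep}) and bounding $|\det\partial\Psi_t(\Phi_t(x))|^{p-1}$ pointwise by a stochastic exponential, after which the BDG--Gr\"onwall step runs on the martingale $M_t=\int |f_0|^p\mathcal{E}_t(x)\,dx$. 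Your route---applying the renormalized equation with $\Gamma\approx|\cdot|^p$ so that $G=(p-1)\Gamma$ and $H=(p-1)^2\Gamma$, then closing a Gr\"onwall/BDG loop on $\int|f^\ep|^p$---is a legitimate alternative and arguably more in the spirit of the DiPerna--Lions philosophy; the paper's flow computation buys a pointwise-in-$x$ bound with no test function at all, which sidesteps boundary terms cleanly.

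One point to tighten: your local Gr\"onwall with a compactly supported $\chi$ does not close as written, because the transport terms $\langle\Gamma_R(f^\ep),b_\ep\!\cdot\!\nabla\chi\rangle$ and $\langle\Gamma_R(f^\ep)\sigma^k_{\ep,i}\sigma^k_{\ep,j},\partial_i\partial_j\chi\rangle$ live on $\supp\nabla\chi$, not on $\{\chi>0\}$, and are not dominated by $\langle\Gamma_R(f^\ep),\chi\rangle$. Since your approximants $b_\ep,\sigma^k_\ep,f_0^\ep$ are compactly supported, the smooth solution $f^\ep$ is itself compactly supported (the flow is the identity outside $\supp b_\ep\cup\supp\sigma^k_\ep$), so you should simply integrate over all of $\R^n$: the divergence terms vanish and the surviving zero-order terms are exactly $(p-1)\Div b_\ep$, $\tfrac{p-1}{2}\partial_i\sigma^k_{\ep,j}\partial_j\sigma^k_{\ep,i}$, $\tfrac{(p-1)^2}{2}(\Div\sigma^k_\ep)^2$ multiplying $|f^\ep|^p$, with the sole martingale term $(p-1)\int(\Div\sigma^k_\ep)|f^\ep|^p\,dx\,\dee W^k$. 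This yields the global estimate $\E\|f^\ep\|_{L^\infty_t(L^p)}^{2p}\le C\|f_0^\ep\|_{L^p}^{2p}$ directly, matching the paper's Lemma~\ref{lem:apriori-est}.
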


\begin{remark}
  Currently we are unable to prove existence of weak $L^\infty$ solutions to (\ref{eq:stochastic-transport-eq-1}, unless $\Div \sigma^k = 0$. Additionally we are also unable to obtain higher moments in $\Omega$ unless $\Div\sigma^k \in L^\infty_t(L^\infty)$. These subtleties will be addressed in a future work.
\end{remark}

Before we prove Theorem \ref{thm:existence}, we will need a couple of preliminary Lemmas. Fix a cannonical stochastic basis  $(\Omega,\mathscr{F}, (\mathscr{F}_t),\P,\{W^k\})$ and assume that $b$ and $\sigma^k$ are smooth. Let $x\mapsto\Phi_{s,t}(x)$ be the regular stochastic flow associated with the SDE
\begin{equation}\label{eq:ito-diffusion}
    \dee \Phi_{s,t}(x) = b(t,\Phi_{s,t}(x))\dt + \sigma^k(t,\Phi_{s,t}(x))\dee W(t)^k,\quad \Phi_{s,s}(x) = x.
\end{equation}
According to Kunita \cite{Kunita1986-sr}, $x\mapsto\Phi_{s,t}(x)$ is a diffeomorphism and adapted to $\mathscr{F}_{s,t} = \sigma(\{W_t^k - W^k_s\} : 0\leq s\leq t\leq \infty)$. Moreover, its spatial inverse $\Psi_{s,t}(x) = \Phi^{-1}_{s,t}(x)$ is also $(\mathscr{F}_{s,t})$-adapted (pontwise in $x$). We will denote $\Phi_t = \Phi_{0,t}$ and $\Psi_{t} = \Psi_{0,t}$. If $f_0$ smooth, then it follows from Kunita \cite{Kunita1986-sr} that the unique solution to (\ref{eq:stochastic-transport-eq-1}) is given by
\begin{equation}
  f(t,x) = (\Phi_t)_{\#}f_0(x) = f_0(\Psi_t(x))\det{\partial\Psi_t(x)}
\end{equation}
where $(\partial\Psi_t)_{ij} = \partial_j(\Psi_{t})_i$ is the Jacobian matrix. The following Lemma regarding a formula for $\det{\partial\Psi_t(x)}$ will prove crutial for obtaining apriori estimates.
\begin{lem}\label{lem:dpsi-rep}
Assume that $b$ and $\sigma^k$ are smooth and compactly supported, then the following formula for $\det{\partial\Psi_t(x)}$ holds
  \begin{equation}
    \begin{aligned}
      \det{\partial\Psi_t(x)} &= \exp\bigg\{-\int_0^t \Big[\Div b(s,\Psi_{s,t}(x)) - \tfrac{1}{2}(\partial_i\sigma^k_j\partial_j\sigma^k_i)(s,\Psi_{s,t}(x))\Big]\ds\\
        &\hspace{1in}- \int_0^t\Div\sigma^k(s,\Psi_{s,t}(x))\dee W^k(s)\bigg\}.
     \end{aligned}
  \end{equation}
\end{lem}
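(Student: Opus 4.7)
\medskip

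\noindent\textbf{Proposal.} The plan is to first establish the analogous formula for the \emph{forward} Jacobian $J_t(x):=\partial\Phi_t(x)$ by applying It\^o's formula to $\log\det J_t(x)$, and then transfer the result to $\partial\Psi_t(x)$ using $\det\partial\Psi_t(y)=[\det J_t(\Psi_t(y))]^{-1}$ together with the two-parameter flow identity $\Phi_s\circ\Psi_t=\Psi_{s,t}$.

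Since $b$ and $\sigma^k$ are smooth and compactly supported, Kunita's theorem yields a smooth flow and one may differentiate (\ref{eq:ito-diffusion}) in $x$ to obtain the linear matrix SDE
\begin{equation*}
dJ_t=(\nabla b)(t,\Phi_t)\,J_t\,dt+(\nabla\sigma^k)(t,\Phi_t)\,J_t\,dW^k_t,\qquad J_0=I.
\end{equation*}
I would then apply It\^o's formula in conjunction with Jacobi's identity, writing $d(\log\det J_t)=\Tr(J_t^{-1}dJ_t)-\tfrac12\Tr\big((J_t^{-1}dJ_t)^2\big)$ with the It\^o rules $dt\,dW^k=0$ and $dW^kdW^\ell=\delta_{k\ell}\,dt$. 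Invariance of the trace under conjugation by $J_t$ collapses $\Tr(J_t^{-1}(\nabla b)J_t)=\Div b(t,\Phi_t)$, $\Tr(J_t^{-1}(\nabla\sigma^k)J_t)=\Div\sigma^k(t,\Phi_t)$, and $\Tr(J_t^{-1}(\nabla\sigma^k)J_tJ_t^{-1}(\nabla\sigma^k)J_t)=\partial_i\sigma^k_j\partial_j\sigma^k_i(t,\Phi_t)$, yielding
\begin{equation*}
\log\det J_t(x)=\int_0^t\bigl[\Div b-\tfrac12\partial_i\sigma^k_j\partial_j\sigma^k_i\bigr](s,\Phi_s(x))\,ds+\int_0^t\Div\sigma^k(s,\Phi_s(x))\,dW^k_s.
\end{equation*}

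Differentiating $\Phi_t(\Psi_t(y))=y$ in $y$ yields $\det\partial\Psi_t(y)=[\det J_t(\Psi_t(y))]^{-1}$, so exponentiating and substituting $x\mapsto\Psi_t(y)$ produces an expression of the exact form claimed, once one invokes $\Phi_s(\Psi_t(y))=\Psi_{s,t}(y)$ (a consequence of $\Phi_{0,t}=\Phi_{s,t}\circ\Phi_{0,s}$).

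\medskip

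The main obstacle is justifying the substitution $x\mapsto\Psi_t(y)$ inside the forward It\^o integral $\int_0^t\Div\sigma^k(s,\Phi_s(\cdot))\,dW^k_s$: the random point $\Psi_t(y)$ is only $\mathscr{F}_t$-measurable, so this is an anticipating substitution that does not fit directly into standard It\^o calculus. I would handle this using Kunita's substitution formula for stochastic integrals of random fields (see \cite{Kunita1986-sr}), under which the substituted integral coincides with the backward It\^o integral $\int_0^t\Div\sigma^k(s,\Psi_{s,t}(y))\,dW^k_s$ against the backward filtration $\mathscr{F}_{s,t}$, which is precisely how the expression on the right-hand side of the lemma is to be interpreted. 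An alternative that circumvents the anticipating substitution is to derive a backward It\^o SDE for $s\mapsto\Psi_{s,t}(y)$ on $s\in[0,t]$ starting from $\Psi_{t,t}(y)=y$, differentiate in $y$ to get a linear backward SDE for $\partial\Psi_{s,t}(y)$, and apply a backward version of the Jacobi/It\^o computation above directly to $\log\det\partial\Psi_{s,t}(y)$; the two minus signs in the stated formula then emerge automatically from running time backward from $s=t$ to $s=0$.
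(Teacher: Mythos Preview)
Your proposal is correct and follows essentially the same approach as the paper: derive the forward formula for $\log\det\partial\Phi_t$ via It\^o's formula applied to $\log\det$, then invert and substitute $x\mapsto\Psi_t(y)$ using the flow identity. In fact you are more careful than the paper on one point: the paper simply writes $\det\partial\Psi_t(x)=[\det\partial\Phi_t(\Psi_t(x))]^{-1}$ and concludes without discussing the anticipating nature of the substitution $x\mapsto\Psi_t(y)$ into the forward It\^o integral, whereas you correctly flag this and propose resolving it via Kunita's substitution formula (or the backward SDE route).
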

\begin{proof}
To study $\det{\partial\Psi_t(x)}$ further, we remark that it suffices to study $\det{\partial\Phi_t(x)}$, since we have $\partial\Phi_t(\Psi(x))\partial\Psi_t(x) = I$ and therefore
\begin{equation}
  \det{\partial\Psi_t(x)} = [\det{\partial\Phi_t(\Psi_t(x))}]^{-1}.
\end{equation}
The taking the derivative of both sides of the SDE with respect to the initial data, it is well known that the matrix $\partial\Phi_t$ satisfies
\begin{equation}
  \dee \partial\Phi_{t} = \partial b(t,\Phi_{t})\partial\Phi_{t}\dt +\partial\sigma^k(t,\Phi_{t})\partial\Phi_{t}\dee W^k_t,
\end{equation}
To study the determinant of $\partial\Phi_t$, we use the fact that for any invertible matrix $A$, the Gateaux derivative of $F(A) := \log{(\det{A})}$ in the direction $U$ is
\begin{equation}
  DF(A)\big[U\big] = \Tr\big(UA^{-1}\big)
\end{equation}
while the second order Gateaux derivative in the directions $U,V$ is
\begin{equation}
  D^2F(A)\big[U,V\big] = -\Tr\big(UA^{-1}VA^{-1}\big).
\end{equation}
Applying the classical It\^{o}'s formula to quantity $F(\partial\Phi_t) = \log{(\det\partial\Phi_t(x)})$ (pointwise in $x$), and using the above formulas, we find
\begin{equation}
  \begin{aligned}
    \dee F(\partial\Phi_t) &= DF(\partial\Phi_t)\big[\partial b(t,\Phi_t)\partial\Phi_t\big]\dt + DF(\partial\Phi_t)\big[\partial \sigma^k(t,\Phi_t)\partial\Phi_t\big]\dee W^k_t\\
    &\hspace{1in}+ \tfrac{1}{2}D^2F(\partial\Phi_t)\Big[\partial\sigma^k(t,\Phi_t)\partial\Phi_t,\partial\sigma^k(t,\Phi_t)\partial\Phi_t\Big]\dt\\
    &=\Tr\big(\partial b\big)(t,\Phi_t(x))\dt + \Tr\big(\partial\sigma^k\big)(t,\Phi_t)\dee W^k_t - \frac{1}{2}\Tr\big((\partial\sigma^k)(\partial\sigma^k)\big)(t,\Phi_t)\dt\\
    &= \Div b(t,\Phi_t)\dt  + \Div \sigma^k(t,\Phi_t)\dee W^k_t - \frac{1}{2}(\partial_i\sigma^k_j\partial_j\sigma^k_i)(t,\Phi_t)\dt
    \end{aligned}
  \end{equation}
  Using the fact that $F(\partial\Phi_0) = 0$ concludes the proof.
\end{proof}

We deduce apriori estimates on $f$ for smooth $b$ and $\sigma^k$.
\begin{lem}\label{lem:apriori-est}
  Assume that $f_0, b$ and $\sigma^k$ are smooth and compactly supported and let $f(t,x)$ be the unique classical solution to (\ref{eq:stochastic-transport-eq-1}) For each $p\in [1,\infty)$, we have the following inequality
  \begin{equation}\label{eq:Lp-estimate-existence}
    \E\|f\|_{L^\infty_t(L^p)}^{2p}\leq C_{p,b,\sigma} \|f_0\|_{L^p}^{2p}
  \end{equation}
$C_{p,b,\sigma}$ is a constant that depends on $p$, $b$ and $\sigma^k$ through the norms $\|\Div b\|_{L^1_t(L^\infty)}$, $\|\Div \sigma^k\|_{L^2_t(L^\infty)}$ and $\|\partial_i\sigma^k_j\partial_j\sigma^k_i\|_{L^1_t(L^\infty)}$.
\end{lem}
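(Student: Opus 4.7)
My plan is to exploit the classical stochastic flow representation $f(t,x) = f_0(\Psi_t(x))\det\partial\Psi_t(x)$, which is available here by the Kunita theory since $f_0, b, \sigma^k$ are smooth and compactly supported. Using the change of variables $y = \Psi_t(x)$ together with the identity $\det\partial\Psi_t(\Phi_t(y)) = [\det\partial\Phi_t(y)]^{-1}$ (already noted in the proof of Lemma \ref{lem:dpsi-rep}), for each fixed $t$ I obtain
\begin{equation*}
  \|f(t)\|_{L^p}^p \;=\; \int_{\R^n}|f_0(y)|^p\bigl[\det\partial\Phi_t(y)\bigr]^{1-p}\,\dy.
\end{equation*}
Running the same It\^o computation from the proof of Lemma \ref{lem:dpsi-rep} for $\log\det\partial\Phi_t$ (rather than $\log\det\partial\Psi_t$), I can write $(\det\partial\Phi_t(y))^{1-p} = e^{A_t(y)}\exp(M_t(y))$ where
\begin{equation*}
  A_t(y) = (1-p)\!\int_0^t\!\bigl[\Div b - \tfrac{1}{2}\partial_i\sigma^k_j\partial_j\sigma^k_i\bigr](s,\Phi_s(y))\,\ds, \qquad M_t(y) = (1-p)\!\int_0^t\!\Div\sigma^k(s,\Phi_s(y))\,\dee W^k(s).
\end{equation*}

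The next step is to separate a true martingale from a deterministic prefactor. Let $\mathcal{E}_t(M(y)) := \exp(M_t(y) - \tfrac{1}{2}\langle M(y)\rangle_t)$ be the Dol\'eans-Dade exponential of $M(y)$; since $\Div\sigma^k$ is bounded in the present smooth setting, $\mathcal{E}_t(M(y))$ is a genuine mean-one martingale. The three hypotheses on $\Div b$, $\partial_i\sigma^k_j\partial_j\sigma^k_i$, and $\Div \sigma^k$ yield a deterministic constant $C_1 = C_1(p,b,\sigma)$, depending only on $p$ and the three norms listed in the statement, such that
\begin{equation*}
  \sup_{t\in[0,T],\,y\in\R^n}\bigl|A_t(y) + \tfrac{1}{2}\langle M(y)\rangle_t\bigr| \;\leq\; C_1.
\end{equation*}
Consequently $(\det\partial\Phi_t(y))^{1-p} \leq e^{C_1}\mathcal{E}_t(M(y))$ pointwise, and therefore $\|f(t)\|_{L^p}^p \leq e^{C_1} Z_t$ where $Z_t := \int_{\R^n} |f_0(y)|^p\,\mathcal{E}_t(M(y))\,\dy$. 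A stochastic Fubini argument (justified because $\E[\mathcal{E}_t(M(y))]=1$ and $|f_0|^p\in L^1$) shows that $Z_t$ is itself a nonnegative continuous $\mathcal{F}_t$-martingale.

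The final ingredient is Doob's $L^2$ maximal inequality: $\E[\sup_{t\leq T} Z_t^2] \leq 4\E[Z_T^2]$. I expand the second moment by (deterministic) Fubini,
\begin{equation*}
  \E[Z_T^2] = \int\!\!\int |f_0(y)|^p|f_0(z)|^p\,\E\bigl[\mathcal{E}_T(M(y))\mathcal{E}_T(M(z))\bigr]\,\dy\,\dz,
\end{equation*}
and use the standard product identity $\mathcal{E}_T(M(y))\mathcal{E}_T(M(z)) = \mathcal{E}_T\bigl(M(y)+M(z)\bigr)\,\exp\bigl(\langle M(y),M(z)\rangle_T\bigr)$. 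Since the cross variation is uniformly bounded in $(y,z,\omega)$ by $(p-1)^2\sum_k\|\Div\sigma^k\|_{L^2_t(L^\infty)}^2$, while $\mathcal{E}_T(M(y)+M(z))$ has mean one, we obtain $\E[Z_T^2]\leq C_2\|f_0\|_{L^p}^{2p}$ with $C_2$ of the correct form. Combining gives (\ref{eq:Lp-estimate-existence}) with $C_{p,b,\sigma} = 4 e^{2C_1} C_2$. The principal subtlety is the uniform-in-$y$ control of $\langle M(y)\rangle_T$ and $\langle M(y),M(z)\rangle_T$, which is precisely what forces the $L^2_t(L^\infty)$ norm of $\Div\sigma^k$ to appear squared in the constant, and explains the restriction to $p < \infty$: the flow-representation approach above yields neither a pathwise $L^\infty$ bound nor higher moments without additional hypotheses, matching the remark following the theorem.
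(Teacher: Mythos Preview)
Your argument is correct and follows the paper's strategy almost exactly through the flow representation, the bound $(\det\partial\Phi_t(y))^{1-p}\leq C\,\mathcal{E}_t(M(y))$, and the introduction of the nonnegative martingale $Z_t=\int|f_0|^p\mathcal{E}_t(M(y))\,\dy$. The only divergence is in the final moment estimate: the paper applies the BDG inequality to $Z$, bounds the quadratic variation pointwise by $[Z]_t\leq (p-1)^2\int_0^t\|\Div\sigma^k(s)\|_{L^\infty}^2 Z_s^2\,\ds$, and closes with Gr\"onwall; you instead use Doob's $L^2$ maximal inequality and compute $\E[Z_T^2]$ directly via the product identity $\mathcal{E}(M(y))\mathcal{E}(M(z))=\mathcal{E}(M(y)+M(z))\exp\langle M(y),M(z)\rangle$. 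Your route is slightly more explicit (no Gr\"onwall step) and gives a cleaner constant, while the paper's BDG/Gr\"onwall approach makes it more transparent how one might upgrade to higher moments if $\Div\sigma^k\in L^\infty_t(L^\infty)$, consistent with the remark following the theorem. Both are standard and yield the stated dependence of $C_{p,b,\sigma}$ on exactly the three norms listed.
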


\begin{proof}
 We will use the stohcastic flow and write
\begin{equation}
  \int_{\R^n} |f(t,x)|^p \dx = \int_{\R^n}|f_0(x)|^p|\det\partial\Psi_t(\Phi_t(x))|^{p-1}\dx.
\end{equation}
From the formula for $\det\partial\Psi_t(x)$ provided by Lemma \ref{lem:dpsi-rep}, it is a simple matter to conclude that for each $x\in \R^n$
\begin{equation}
    |\det\partial\Psi_t(\Phi_t(x))|^{p-1}\leq  C_{p,b,\sigma}\, \mathcal{E}_t(x)
  \end{equation}
where $\mathcal{E}_t(x)$ is the stochastic exponential of the martingale $(1-p)\int_0^t\Div\sigma^k(\Phi_s(x))\dee W_s^k$, specifically
\begin{equation}
  \mathcal{E}_t(x):= \exp\bigg\{(1-p) \int_0^t\Div\sigma^k(\Phi_{s}(x))\dee W^k_s- \frac{1}{2}(p-1)^2\int_0^t|\Div\sigma^k|^2(\Phi_{s}(x))\ds\bigg\}.
\end{equation}
Since $f_0(x)$ and $\mathcal{E}_t(x)$ are smooth in $x$, Fubini's theorem implies that since $\mathcal{E}_t(x)$ is a non-negative $\mathscr{F}_t$ martingale, then
\begin{equation}
  M_t := \int_{\R^n} |f_0(x)|^p\mathcal{E}_t(x)\dx
\end{equation}
is also a non-negative $\mathscr{F}_t$ martingale satisfying the SDE
\begin{equation}
  M_t = \int_{\R^n}|f_0(x)|^p\dx +  (1-p)\int_0^t\left(\int_{\R^n}|f_0(x)|^p\Div\sigma^k(s,\Phi_s(x))\mathcal{E}_s(x)\dx \,\right)\dee{W}_s^k
\end{equation}
and has quadratic variation
\begin{equation}
  \begin{aligned}
    \left[M\right]_t &= (p-1)^2\int_0^t\left(\int_{\R^n} |f_0(x)|^p\Div\sigma^k(s,\Phi_s(x))\mathcal{E}_s(x)\dx\right)^2\ds\\
    &\leq (p-1)^2\int_0^t\|\Div\sigma^k(s,\cdot)\|_{L^\infty}^2M_s^2\,\ds.
  \end{aligned}
\end{equation}
Using the BDG inequality, we deduce
\begin{equation}
  \begin{aligned}
    \E \left[\sup_{s\in[0,t]}\|f(s,\cdot)\|_{L^p}\right]^{2p}&\leq C_{p,b,\sigma} \E\left[ \sup_{s\in[0,t]}|M_t|^{2}\right]\\
      &\leq C_{p,b,\sigma} \int_0^t\|\Div\sigma^k(s,\cdot)\|_{L^\infty}^2\E \left[\sup_{r\in[0,s]} |M_r|^2\right]\,\ds
  \end{aligned}
\end{equation}
Using Gr\"{o}nwall, and the fact that $\|\Div\sigma^k(t,\cdot)\|_{L^\infty}^2$ is in $L^1_t$, gives the bound (\ref{eq:Lp-estimate-existence}).
\end{proof}

We are now equipped to prove Theorem \ref{thm:exist-unique}.

\begin{proof}[Proof of Theorem \ref{thm:existence}]
  The proof is straight forward. We first approximate $b,\sigma,f_0$ by smooth functions  $b_n$, $\sigma^k_n$, $f_{0,n}$ with compact support such that
  \begin{equation}
    \sup_n \left(\|\Div b_n\|_{L^1_t(L^\infty)} + \|\partial_i\sigma_{j,n}^k\partial_j\sigma^k_{i,n}\|_{L^1_{t}(L^\infty)} + \|\Div\sigma_n^k\|_{L^2_t(L^\infty)}\right) < \infty.
  \end{equation}
  and
  \begin{equation}
    b_n \to b, \quad \sigma^k_n \to \sigma^k,\quad f_{0,n} \to f_0,
  \end{equation}
  pointwise almost everywhere, with uniform bounds
  \begin{equation}
    \sup_{n}\|b_n\|_{L^1_t(L^{p^\prime})} < \infty,\quad  \sup_n\|\sigma^k_n\|_{L^2_t(L^{2p^\prime})} < \infty, \quad \sup_n\|f_{0,n}\|_{L^p} < \infty.
  \end{equation}
  This can, for instace be done by mollification and a smooth cutoff.

  Let $f_n$ be the unique classical solution to the stochastic continuity equation (\ref{eq:stochastic-transport-eq-1}) associated to $b_n$, $\sigma^k_n$, $f_{0,n}$. Using the estimate in Lemma \ref{lem:apriori-est} we conclude that $\{f_n\}$ is uniformly bounded in $L^2(\Omega\times[0,T]; L^p)$. Therefore $\{f_n\}$ has a  subsequence, still denoted $\{f_n\}$, and a limit $f$  such that
  \begin{equation}
    f_n \to f \quad \text{weakly in }\quad L^2(\Omega\times[0,T]; L^p)
  \end{equation}
Moreover since the space of progressively measurable processes in $L^2(\Omega\times[0,T];L^p)$ is closed (therefore weakly closed) and since $f_n(t,x)$ can easily seen to be a progessively measurable process with values in $L^P$ it follows that the limit $f$ is also progressively measurable.

  We now wish to pass the limit in the weak form. Let $\phi\in C^\infty([0,T]\times\R^n)$ such that $\phi$ is compactly supported in $x$, and $\phi(T,x) = 0$, then for each $n\geq 0$ we have
  \begin{equation}\label{eq:space-time-weak}
    \begin{aligned}
      &\int_0^T\left(\int_{\R^n}(\partial_t\phi + b_n\cdot\nabla\phi + \sigma^k_{i,n}\sigma^k_{j,n}\partial_i\partial_j\phi)f_n\dx\right)\dt\\
      &\hspace{1in}+ \int_0^T\left(\int_{\R^n}(\sigma^k_n\cdot\nabla\phi)f_n\dx\right)\dee W^k(t) = \int_{\R^n} \phi(0,\cdot)\,f_{0,n}\,\dx.
    \end{aligned}
  \end{equation}
  First, the pointwise convergence of $f_{n,0}$ and uniform bound in $L^p$ imply that
  \begin{equation}
    \int_{\R_n} \phi(0,\cdot)f_{0,n}\dx \to \int_{\R^n} \phi(0,\cdot) f_0\dx.
  \end{equation}
Also, by the pointwise convergence of $b_n$ and $\sigma_n$,
    \begin{equation}
      (\partial_t\phi + b_n\cdot\nabla\phi + \sigma^k_{i,n}\sigma^k_{j,n}\partial_i\partial_j\phi) \to (\partial_t\phi + b\cdot\nabla\phi + \sigma^k_{i}\sigma^k_{j}\partial_i\partial_j\phi)
    \end{equation}
    pointwise a.e on $[0,T]\times\R^n$ and uniform bounds on $b_n$ and $\sigma^k\tensor\sigma^k_n$ in $L^1_t(L^{p^\prime})$ imply, by the product limit Lemma \ref{lem:prod-lim}, that
    \begin{equation}
      \begin{aligned}
        &\int_0^T\left(\int_{\R^n}(\partial_t\phi + b_n\cdot\nabla\phi + \sigma^k_{i,n}\sigma^k_{j,n}\partial_i\partial_j\phi)f_n\dx\right)\dt\\
        &\hspace{1in}\to \int_0^T\left(\int_{\R^n}(\partial_t\phi + b\cdot\nabla\phi + \sigma^k_{i}\sigma^k_{j}\partial_i\partial_j\phi)f\dx\right)\dt
      \end{aligned}
    \end{equation}
    weakly in $L^2(\Omega)$.
    
It remains to pass the limit in the stochastic integral. Clearly,
  \begin{equation}
    \int_{\R^n} \sigma^k_n\cdot\nabla\phi\, f_n\dx \to \int_{\R^n}\sigma^k\cdot\nabla \phi\, f\dx\quad \text{ weakly in } L^2(\Omega\times[0,T]),
  \end{equation}
  and since the stochastic integral is a weakly continuous linear mapping from $L^2(\Omega\times[0,T])$ to $L^2(\Omega)$, we may pass the limit term by term in the summation for the stochastic integral. 

To obtain the almost sure, time integrated form, we remark that $\P$ almost surely,
\begin{equation}\label{eq:almost-sure-space-time-weak}
   \int_0^T\int_{\R^n}(\partial_t\phi + b\cdot\nabla + \sigma^k_i\sigma^k_j\partial_i\partial_j\phi)f\dx\dt + \sum_k\int_0^T\int_{\R^n}(\sigma^k\cdot\nabla\phi)f\dx\dee W^k_t = 0.
  \end{equation}
Now fix a $t\in [0,T]$ and choose a sequence of test functions $\phi^n(s,x) = \varphi(x)\psi^n(s)$, where $\psi^n(s)$ is a smooth approximation of the indicator $\1_{[0,t]}(s)$ so that $\partial_s \psi^n(s)$ is symmetric approximation of a delta function centered at $s=t$. Using the integrability of $f$ in time, Lebesgue's differentiation theorem implies that for almost every $t\in[0,T]$,
\begin{equation}
  \int_0^T \int_{\R^n}\partial_s\phi^n(s,x)f(s,x)\dx\ds \to \langle f(t), \varphi\rangle.
\end{equation}
Passing the limit in (\ref{eq:almost-sure-space-time-weak}), for test function $\phi = \phi^n$, gives the time-integrated weak form (\ref{eq:time-integrated-weak-form}).

Finally, using a standard application of the Kolmogorov continuity theorem, we can obtain that $\E\|\langle \varphi, f\rangle\|_{C_t} < \infty$ and therefore $t\mapsto \langle \varphi,f(t)\rangle$ has a continuous modification. It then follows from the equation for the weak, time-integrated form (\ref{eq:time-integrated-weak-form}), and the fact that $f$ is progressively measurable that the modification of $t\mapsto \langle \varphi,f(t)\rangle$ is also an $\mathscr{F}_t$ semi-martingale.
\end{proof}

\subsection{Uniqueness and Stability}

In this section, we will now consider the uniqueness of solutions
\begin{thm}[Uniqueness]\label{thm:uniqueness}
  Let $p\in [2,\infty]$ and suppose that
  \begin{equation}
    b\in L^1_t(W^{1,p^\prime}_\loc), \quad \sigma^k \in L^2_t(W^{1,\gamma}_\loc),\quad \gamma = \frac{2p}{p-2}
  \end{equation}
  and
  \begin{equation}
    \frac{b}{1+|x|} \in L^1_t(L^\infty),\quad \frac{\sigma^k}{1+|x|} \in L^2_t(L^\infty).
  \end{equation}
Then any weak $L^p$ solution to (\ref{eq:stochastic-transport-eq-1}) is unique among measurable functions on $\Omega\times[0,T]\times\R^n$.
\end{thm}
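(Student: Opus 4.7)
By linearity of \eqref{eq:stochastic-transport-eq-1}, the difference $f:=f_1-f_2$ of two weak $L^p$ solutions with common initial data is itself a weak $L^p$ solution with $f_0=0$, so it suffices to prove $f\equiv 0$ $\P$-a.s. The regularity assumptions on $b$ and $\sigma^k$ are precisely those of Theorem~\ref{thm:main-renorm-result-generalized}, hence $f$ is renormalizable and the weak equation \eqref{eq:Renorm-weak-form} is at our disposal for every $\Gamma\in C^2_b(\R)$. The strategy is the DiPerna--Lions one, adapted to the stochastic setting: approximate the absolute value by bounded $C^2$ functions so that the renormalization source terms carrying $\Div b$ and $\nabla\sigma^k$ disappear in the limit, then close a Gr\"onwall-type inequality using the growth hypotheses on $b/(1+|x|)$ and $\sigma^k/(1+|x|)$.

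Concretely, for $\el>0$ and $K>1$, let $\Gamma_{\el,K}\in C^2_b(\R)$ agree with $\sqrt{z^2+\el^2}-\el$ on $[-K,K]$ and be smoothly flattened to a constant outside $[-2K,2K]$. A direct computation shows that the associated $G_{\el,K}(z)=z\Gamma_{\el,K}'(z)-\Gamma_{\el,K}(z)$ and $H_{\el,K}(z)=zG_{\el,K}'(z)-G_{\el,K}(z)$ are uniformly bounded by some $C(K)$ and satisfy $|G_{\el,K}|+|H_{\el,K}|\lesssim \el$ on $[-K,K]$. Pair \eqref{eq:Renorm-weak-form} (with $\Gamma=\Gamma_{\el,K}$) against a spatial cutoff $\varphi_R(x)=\phi(|x|/R)$, $\phi\in C^\infty_c$ and $\phi\equiv 1$ near $0$, which enjoys the uniform bound $(1+|x|)|\nabla\varphi_R|+(1+|x|)^2|\nabla^2\varphi_R|\lesssim 1$, and take expectations to annihilate the It\^o integrals.

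Pass the limits in the order $\el\to 0$, then $K\to\infty$, then $R\to\infty$. The $\el\to 0$ limit kills the source terms $\E\langle G_{\el,K}(f)\Div b,\varphi_R\rangle$, $\E\langle G_{\el,K}(f)\partial_i\sigma^k_j\partial_j\sigma^k_i,\varphi_R\rangle$, $\E\langle H_{\el,K}(f)(\Div\sigma^k)^2,\varphi_R\rangle$ and $\E\langle G_{\el,K}(f)\Div\sigma^k,\sigma^k\cdot\nabla\varphi_R\rangle$ on the region $\{|f|\le K\}$ by dominated convergence, using the local integrability of the coefficients on $\supp\varphi_R$; their residual $\{|f|>K\}$ contributions, bounded by $C(K)$ times locally integrable factors, vanish as $K\to\infty$ through the Markov bound $|\{|f(t,\cdot,\omega)|>K\}|\lesssim K^{-p}\|f\|_{L^p}^p$. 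What remains after these two limits is the weak form of a pure continuity equation for $|f|$ tested against $\varphi_R$; combined with the growth hypotheses, this yields the differential inequality
\begin{equation*}
\partial_t\,\E\langle |f|,\varphi_R\rangle \le C(t)\,\E\langle |f|,\widetilde\varphi_R\rangle,
\end{equation*}
where $C\in L^1([0,T])$ depends only on the growth norms $\|b/(1+|x|)\|_{L^1_t(L^\infty)}$ and $\|\sigma^k/(1+|x|)\|_{L^2_t(L^\infty)}^2$, and $\widetilde\varphi_R$ is a slightly enlarged cutoff with the same asymptotic properties. A Gr\"onwall argument (iterated on dyadic scales in $R$, or implemented with a refined radial cutoff) combined with $f_0\equiv 0$ and the $L^p$ integrability of $f$ yields $\E\langle |f(t)|,\varphi\rangle=0$ for every positive Schwartz $\varphi$, hence $f\equiv0$.

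The principal obstacle is the treatment of the renormalization source terms: the quantities $\Div b$, $\partial_i\sigma^k_j\partial_j\sigma^k_i$ and $(\Div\sigma^k)^2$ are only locally integrable in $(t,x)$, and so cannot be absorbed by Gr\"onwall in the natural $L^p$ norm of $f$ as in the existence proof. It is precisely the DiPerna--Lions choice of approximating $|z|$ (rather than $|z|^p$) that produces $G_\el,H_\el\to 0$ and thereby eliminates these problematic terms before the spatial cutoff is sent to infinity. The outer $K$-truncation, required to stay in $C^2_b$, is a purely technical device removed via the $L^p$ integrability of $f$, while the final $R\to\infty$ passage is the place where the at-most-linear growth of the coefficients enters in an essential way.
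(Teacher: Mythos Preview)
Your argument follows the paper's strategy: invoke Theorem~\ref{thm:main-renorm-result-generalized} to renormalize, approximate $|z|$ by bounded $C^2$ functions so that the $G$- and $H$-source terms vanish in the limit, take expectation to kill the martingale terms, and close with Gr\"onwall using the linear-growth hypotheses on $b$ and $\sigma^k$. The paper first packages the $\el,K\to$ limits into a clean identity (Lemma~\ref{lem:stability}) for compactly supported test functions via a single-parameter approximant $\Gamma_\ep=A_\ep B_\ep$ with $|G_\ep|,|H_\ep|\lesssim 1+|z|$ uniformly; your two-parameter $\Gamma_{\el,K}$ achieves the same end.

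The one substantive difference is the spatial localisation. The paper tests the resulting $|f|$-equation against the integrable weight $w(x)=(1+|x|^2)^{-r/2}$, $r>n$, which enjoys $|\nabla w|\lesssim w/(1+|x|)$ and $|\nabla^2 w|\lesssim w/(1+|x|)^2$; consequently the right-hand side of the resulting inequality is bounded by $C(t)\,\E\int w|f|\,\dx$ with the \emph{same} $w$, and Gr\"onwall closes immediately. Your compactly supported $\varphi_R$ instead yields a ``slightly enlarged'' $\widetilde\varphi_R$ on the right, so the inequality does not self-close; the ``refined radial cutoff'' you allude to is effectively the passage to such an integrable weight, while the dyadic-in-$R$ iteration is a more laborious substitute for the same mechanism. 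This is a technical rather than conceptual divergence, but the weight makes the final step a one-liner.
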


In order to prove the theorem, we will prove the following estimate
\begin{lem}\label{lem:stability}
  Let $p\in [2,\infty]$ and suppose that
  \begin{equation}
    b\in L^1_t(W^{1,p^\prime}_\loc), \quad \sigma^k \in L^2_t(W^{1,\gamma}_\loc),\quad \gamma = \frac{2p}{p-2}
  \end{equation}
  Then any weak $L^p$ solution $f$ satisfies for each $\varphi\in C^\infty_c(\R^n)$, and $t\in [0, T]$,
  \begin{equation}\label{eq:L^1-stab}
    \begin{aligned}
      &\E \int_{\R^n} \varphi(x) |f(t,x)|\dx = \int_{\R^n}\varphi(x)|f_0(x)|\dx\\
      &\hspace{2em}+ \E \int_0^t\left(\int_{\R^n} \left(b\cdot\nabla \varphi + \frac{1}{2}\sigma^k_i\sigma^k_j\partial_i\partial_j\varphi\right)(s,x)|f(s,x)|\dx\right)\ds
    \end{aligned}
  \end{equation}
\end{lem}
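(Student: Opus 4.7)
The plan is to apply Theorem \ref{thm:main-renorm-result-generalized} with a smoothed version of $\Gamma(z)=|z|$ and then take expectation to eliminate the stochastic integrals. The key algebraic observation is that for $\Gamma(z)=|z|$ one has $\Gamma'(z)=\mathrm{sgn}(z)$, hence $G(z)=z\,\mathrm{sgn}(z)-|z|=0$ and $H(z)=zG'(z)-G(z)=0$. Consequently all of the non-divergence correction terms in (\ref{eq:Renorm-weak-form}) drop out, the identity reduces to the weak stochastic continuity equation for $|f|$, and averaging kills the martingale part, leaving exactly (\ref{eq:L^1-stab}).

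Since $|z|$ is not in $C^2_b$, I would proceed by smooth truncation. Let $\beta\in C^\infty_c(\R)$ be an even cutoff with $\beta\equiv 1$ on $[-1,1]$ and $\beta\equiv 0$ outside $[-2,2]$, and set
\[
\Gamma_{\delta,R}(z) := \beta(z/R)\bigl(\sqrt{z^2+\delta^2}-\delta\bigr) \in C^2_b(\R).
\]
Then $|\Gamma_{\delta,R}(z)|\leq |z|$ and $\Gamma_{\delta,R}(z)\to |z|$ pointwise as $\delta\to 0$ followed by $R\to\infty$; a direct computation shows that the corresponding $G_{\delta,R}$ and $H_{\delta,R}$ satisfy the uniform pointwise bounds $|G_{\delta,R}(z)|,|H_{\delta,R}(z)|\lesssim 1+|z|$ (for $\delta\leq 1$) and converge pointwise to zero in the same double limit. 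By Theorem \ref{thm:main-renorm-result-generalized}, which applies under the standing hypotheses on $b$ and $\sigma^k$, the renormalized identity (\ref{eq:Renorm-weak-form}) holds with $\Gamma=\Gamma_{\delta,R}$. Taking expectation, both stochastic-integral terms vanish: the integrands $\Gamma_{\delta,R}(f)$ and $G_{\delta,R}(f)$ are bounded (uniformly in $\omega$), the test function is compactly supported, and $\sigma^k,\Div\sigma^k\in L^2_t(L^2_\loc)$, so they define true square-integrable martingales with zero mean.

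It remains to pass $\delta\to 0$ and then $R\to\infty$ in the resulting deterministic identity via dominated convergence. The uniform dominants $|\Gamma_{\delta,R}(f)|\leq |f|$ and $|G_{\delta,R}(f)|,|H_{\delta,R}(f)|\lesssim 1+|f|$, combined with H\"older's inequality on $\supp\varphi$ using $b,\Div b\in L^1_t(L^{p'}_\loc)$, $\sigma^k\tensor\sigma^k\in L^1_t(L^{p'}_\loc)$, and $\partial_i\sigma^k_j\partial_j\sigma^k_i,(\Div\sigma^k)^2\in L^1_t(L^{\gamma/2}_\loc)$, produce integrable dominants on $\Omega\times[0,T]\times\supp\varphi$. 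The $\Gamma_{\delta,R}$-terms converge to the $|f|$-terms on the right-hand side of (\ref{eq:L^1-stab}), while the $G$- and $H$-terms vanish since $G_{\delta,R}(f),H_{\delta,R}(f)\to 0$ pointwise. The main technical point is the last of these dominations: controlling $|f|(\Div\sigma^k)^2$ in $L^1_\loc$ requires $(\Div\sigma^k)^2\in L^{p/(p-2)}_\loc=L^{\gamma/2}_\loc$, which is exactly the content of $\sigma^k\in W^{1,\gamma}_\loc$ with $\gamma=2p/(p-2)$. An implicit $\omega$-integrability of $f$ is also needed for the expectations to be finite, and can be secured by a stopping-time localization if not already built into the assumptions.
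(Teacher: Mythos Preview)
Your proposal is correct and follows essentially the same route as the paper: apply the renormalization theorem with a $C^2_b$ approximation of $|z|$, take expectation to kill the martingale terms, and then pass to the limit using that the associated $G$ and $H$ vanish pointwise while being dominated by $C(1+|z|)$. The paper uses the one-parameter family $\Gamma_\ep(z)=A_\ep(z)B_\ep(z)$ with $A_\ep$ a parabolic smoothing of $|z|$ and $B_\ep$ a cutoff at scale $\ep^{-1}$, whereas you use $\beta(z/R)(\sqrt{z^2+\delta^2}-\delta)$; this is a cosmetic difference, and your explicit flag of the $\omega$-integrability issue (handled in the paper by simply asserting $f\in L^1(\Omega;L^\infty_t(L^1_\loc))$) is a fair caveat.
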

\begin{proof}
  Since $b$ and $\sigma^k$ are sufficiently regular, we begin by renormalizing the equation and applying Theorem \ref{thm:main-renorm-result-generalized} for a smooth renormalized $\Gamma(z)$. Taking expectation gives
  \begin{equation}\label{eq:renormalized-expec}
    \begin{aligned}
      &\E \int_{\R^n} \varphi(x) \Gamma(f(t,x))\dx = \int_{\R^n}\varphi(x)\Gamma(f_0(x))\dx\\
      &\hspace{2em}+ \E \int_0^t\left(\int_{\R^n} \left(b\cdot\nabla \varphi + \frac{1}{2}\sigma^k_i\sigma^k_j\partial_i\partial_j\varphi\right)(s,x)\Gamma(f(s,x))\dx\right)\ds\\
      &\hspace{2em}+ \E\int_0^t\left(\int_{\R^n}\left(\frac{1}{2}\partial_i\sigma^k_j\partial_j\sigma^k_i\varphi - (\Div b)\varphi - \sigma^k(\Div\sigma^k)\varphi\right)(s,x)G(f(s,x))\dx\right)\ds\\
      &\hspace{2em}+ \E\int_0^t\left(\int_{\R^n}\frac{1}{2}(\Div \sigma^k)^2(s,x)\varphi(x)H(f(s,x))\dx\right)\ds.
    \end{aligned}
  \end{equation}
  We now aim to approximate the absolute value $|z|$ with smooth bounded function $\Gamma_\ep(z)$. For instance, we can choose $\Gamma_\ep(z)= A_\ep(z)B_\ep(z)$, where
  \begin{equation}
    A_\ep(z) =
    \begin{cases}
      \frac{1}{2\ep^2}z^2 + \frac{1}{2}\ep, & \quad |z|<\ep\\
      |z|, &\quad |z|>\ep,
    \end{cases}   
  \end{equation}
  is a parabolic approximation of $|z|$ and $B_\ep(z)$ is a smooth non-negative, non-increasing function which is $1$ on the set $\{|z| < \ep^{-1}\}$ and zero on the set $\{|z| > 2\ep^{-1}\}$, satisfying $B_\ep^\prime(z) \leq \ep$ and $|B^{\prime\prime}_\ep(z)| \leq \ep^2$.

  We let $G_\ep(z) = z\Gamma^\prime(z) - \Gamma_\ep(z)$ and $H_\ep(z) = zG^\prime_\ep(z) -  G_\ep(z)$. From the formulae
  \begin{equation}
    G_\ep(z) = \left(\frac{1}{2\ep}z^2 - \frac{1}{2}\ep\right)B_\ep(z)\1_{|z|<\ep}  + A_\ep(z) zB_\ep(z),
  \end{equation}
  \begin{equation}
    H_\ep(z) = \frac{1}{2}\ep B_\ep(z)\1_{|z|<\ep} + \left(\frac{1}{\ep}z^2 - \ep\right)zB_\ep^\prime(z)\1_{|z|<\ep} + A_\ep(z)\left(zB^\prime(z) + z^2 B^{\prime\prime}(z)\right),
  \end{equation}
  and the definition of $\Gamma_\ep(z)$, we can see that for each $z\in \R$,
  \begin{equation}
    \Gamma_\ep(z) \to |z|,\quad G_\ep(z) \to 0, \quad H_\ep(z) \to 0,
  \end{equation}
  as $\ep \to 0$, and the following bounds hold
  \begin{equation}
    \sup_{\ep} \Gamma_\ep(z) \leqc (1+|z|),\quad \sup_{\ep} |G_\ep(z)| \leqc (1+|z|),\quad  \sup_{\ep} |H_\ep(z)| \leqc (1+|z|).
  \end{equation}
  Using these bounds, and the fact that $f\in L^{1}(\Omega; L^\infty_t(L^1_{\loc}))$, we can use the and the product limit Lemma \ref{lem:prod-lim} in $t,x$ and dominated convergence in $\omega$ to pass the limit as $\ep\to 0$ in each term of (\ref{eq:renormalized-expec}) with $\Gamma(z)$ replaced by $\Gamma_\ep(z)$ to conclude the desired bound (\ref{eq:L^1-stab}).
\end{proof}

We are now ready to prove Theorem \ref{thm:uniqueness}.

\begin{proof}[Proof of Theorem \ref{thm:uniqueness}]
  First, we define the weight
  \begin{equation}
    w(x) := (1 + |x|^2)^{-r/2}
  \end{equation}
  where $r> n$ so that $w(x)$ is integrable on $\R^n$. Note that
  \begin{equation}
    |\nabla w(x)| \leqc \frac{w(x)}{1 + |x|},
  \end{equation}
  and therefore $|\nabla w(x)|$ is also integrable. Using the fact that
  \begin{equation}
    (1+|x|)^{-1}b \in L^1_{t}(L^\infty), \quad (1+|x|)^{-1}\sigma^k \in L^2_t(L^\infty),
  \end{equation}
  along with the fact that $w(x)|f(t,x)|$ belongs to $L^1(\Omega; L^\infty_t(L^1))$, we can approximate $w(x)$ by a sequence of smooth compactly supported functions $\varphi_\ep(x)$ and pass the limit in (\ref{eq:L^1-stab}) to obtain the estimate
  \begin{equation}
     \begin{aligned}
      &\E \int_{\R^n} w(x) |f(t,x)|\dx \leqc \int_{\R^n}w(x)|f_0(x)|\dx\\
      &\hspace{2em}+ \int_0^t\left\|\frac{b(s,x)}{1+|x|}\right\|_{L^\infty}\E\left(\int_{\R^n}w(x)|f(s,x)|\dx\right)\ds\\
      &\hspace{2em}+ \int_0^t\left\|\frac{\sigma^k(s,x)}{1+|x|}\right\|_{L^\infty}^2\E\left(\int_{\R^n}w(x)|f(s,x)|\dx\right)\ds.
    \end{aligned}
  \end{equation}
  Applying Gr\"{o}nwall gives the stability estimate
  \begin{equation}
    \sup_{t\in[0,T]}\E \int_{\R^n} w(x) |f(t,x)|\dx\ds \leq C_{b,\sigma} \int_{\R^n} w(x)|f_0(x)|\dx,
  \end{equation}
 where the constant $C_{b,\sigma}$ depends on $b$ and $\sigma^k$ through the norms $\|(1+|x|)^{-1}b\|_{L^1_t(L^\infty)}$ and $\|(1+|x|)^{-1}\sigma^k\|_{L^2_t(L^\infty)}$. It follows that uniqueness holds $\P\tensor\dt\tensor\dx$ almost everywhere on $\Omega\times[0,T]\times\R^n$.
\end{proof}

\section{Renormalized Solutions for Integrable Drift}\label{sec:renorm-sol-int-drift}

In this section, we turn to a study of the subtle regularizing effects of stochastic transport with regards to the well-posedness of the stochastic transport problem in $L^p$, particularly under circumstances where the corresponding deterministic problem is ill-posed. Our primary interest will be in stochastic transport equations with non-degenerate noise,
  \begin{equation}\label{eq:non-degenerate-SPDE}
    \partial_t f + \Div(b f) - \frac{1}{2}\Delta f + \partial_i f \dot{W}^i= 0.
  \end{equation}
We will specifically we interested in the case that $b \in L^q_t(L^p)$ where $p,q$ satisfy the Krylov-R\"{o}ckner condition
\begin{equation}\label{eq:Krylov-Rockner}
  q\geq 2, p > 2, \quad \frac{2}{q} + \frac{n}{p} < 1.
\end{equation}

Our goal in this section will be to prove Theorem \ref{thm:rough-diff-renorm-theorm}. To begin, we will need some results on an associated parabolic problem.

\subsection{An Associated Parabolic Problem}
This section concerns the following damped backward parabolic problem for each $\lambda > 0$,
\begin{equation}\label{eq:parabolic-problem}
  \partial_tu_\lambda + b\cdot\nabla u_\lambda + \frac{1}{2}\Delta u_\lambda = \lambda u_\lambda - b ,\quad u_\lambda|_{t=T} = 0.
\end{equation}
and $u_\lambda\in \R^n$.

The following regularity and well-posedness result comes from \cite{Fedrizzi2013-cz} (Theorem 3.3 p 716) based on a Theorem from \cite{Krylov2004-uu}.

\begin{thm}\label{thm:exist-unique}
  Let $b \in L^q_t(L^p)$, with $p,q$ satisfying (\ref{eq:Krylov-Rockner}), then there exists a unique solution $u_\lambda:[0,T]\times\R^n \to \R^n$ to (\ref{eq:parabolic-problem}) satisfying
  \begin{equation}
    \|\partial_t u_{\lambda}\|_{L^q_t(L^p)} + \|u_\lambda\|_{L^q_t(W^{2,p})} \leqc \|b\|_{L^q_t(L^p)}
  \end{equation}
\end{thm}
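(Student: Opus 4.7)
The plan is to reduce to a forward parabolic problem via time reversal and then treat the drift as a perturbation of the damped heat semigroup. Setting $v_\lambda(t,x) = u_\lambda(T-t,x)$ and $\tilde b(t,x) = b(T-t,x)$, the problem becomes
\begin{equation*}
\partial_t v_\lambda - \tfrac{1}{2}\Delta v_\lambda + \lambda v_\lambda \,=\, \tilde b\cdot\nabla v_\lambda + \tilde b, \qquad v_\lambda|_{t=0} = 0,
\end{equation*}
and the target estimate becomes $\|\partial_t v_\lambda\|_{L^q_t(L^p)} + \|v_\lambda\|_{L^q_t(W^{2,p})} \leqc \|\tilde b\|_{L^q_t(L^p)}$. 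The central ingredient is the classical maximal $L^q_t(L^p)$ regularity (Solonnikov--Lizorkin type) for the damped heat equation: if $w$ solves $\partial_t w - \tfrac{1}{2}\Delta w + \lambda w = F$ with $w|_{t=0}=0$, then
\begin{equation*}
\|\partial_t w\|_{L^q_t(L^p)} + \|\nabla^2 w\|_{L^q_t(L^p)} + \lambda\|w\|_{L^q_t(L^p)} \,\leq\, C(p,q,n)\,\|F\|_{L^q_t(L^p)}.
\end{equation*}

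The strict inequality $\tfrac{2}{q}+\tfrac{n}{p}<1$ is exactly what produces a small factor $\lambda^{-\theta}$ in a mixed-norm embedding controlling $\nabla w$ uniformly in space-time. Representing $\nabla w$ through the heat-kernel convolution and balancing integrability under the Krylov--R\"ockner condition, one obtains, for some $\theta=\theta(p,q,n)>0$,
\begin{equation*}
\|\nabla w\|_{L^\infty_{t,x}} \,\leq\, C\,\lambda^{-\theta}\bigl(\|\nabla^2 w\|_{L^q_t(L^p)} + \lambda\|w\|_{L^q_t(L^p)}\bigr).
\end{equation*}
Combined with maximal regularity, the map $\Phi:v\mapsto w$ sending $v$ to the unique solution of $\partial_t w - \tfrac{1}{2}\Delta w + \lambda w = \tilde b\cdot\nabla v + \tilde b$ with zero initial data satisfies
\begin{equation*}
\|\Phi(v_1)-\Phi(v_2)\|_Y \,\leq\, C\,\lambda^{-\theta}\,\|\tilde b\|_{L^q_t(L^p)}\,\|v_1-v_2\|_Y,
\end{equation*}
where $Y$ carries the norm on the left-hand side of the maximal regularity estimate. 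Choosing $\lambda=\lambda_0$ large enough (depending on $\|b\|_{L^q_t(L^p)}$) makes $\Phi$ a contraction and yields by Banach fixed point a unique $v_{\lambda_0}\in Y$ satisfying the target bound.

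To extend to arbitrary $\lambda>0$, rewrite the equation as
\begin{equation*}
(\partial_t - \tfrac{1}{2}\Delta + \lambda_0)v_\lambda \,=\, \tilde b\cdot\nabla v_\lambda + \tilde b + (\lambda_0-\lambda)v_\lambda,
\end{equation*}
and iterate the fixed-point argument on short subintervals of length $\tau$ chosen so that the additional linear source $(\lambda_0-\lambda)v_\lambda$ cannot break contractivity; patching across $[0,T]$ yields existence on the full interval with a bound depending on $\lambda$ and $T$ but linear in $\|b\|_{L^q_t(L^p)}$. Uniqueness for each $\lambda$ follows from the same linear construction applied to the difference of two solutions, which solves the same equation with zero source. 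The main obstacle is producing the decaying factor $\lambda^{-\theta}$ in the mixed-norm $L^\infty$ estimate for $\nabla w$: this is precisely where strictness of $\tfrac{2}{q}+\tfrac{n}{p}<1$ enters, as equality only furnishes a bounded rather than vanishing gain and the contraction scheme collapses; this is the same obstruction underlying the openness of the critical case for the renormalization problem.
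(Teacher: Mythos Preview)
The paper does not prove this theorem at all: it is stated with the attribution ``The following regularity and well-posedness result comes from \cite{Fedrizzi2013-cz} (Theorem 3.3 p.~716) based on a Theorem from \cite{Krylov2004-uu}'' and no argument is given. So there is nothing in the paper to compare against; your sketch is supplying a proof where the author simply cites one.

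Your outline is essentially the strategy used in Krylov--R\"ockner and Fedrizzi--Flandoli: maximal $L^q_t(L^p)$ regularity for the damped heat semigroup, combined with the anisotropic embedding $\|\nabla w\|_{L^\infty_{t,x}}\lesssim \lambda^{-\theta}\|F\|_{L^q_t(L^p)}$ (valid precisely because $\tfrac{2}{q}+\tfrac{n}{p}<1$), to close a Banach fixed-point for large $\lambda$. One point to sharpen: the inequality you state,
\[
\|\nabla w\|_{L^\infty_{t,x}} \leq C\lambda^{-\theta}\bigl(\|\nabla^2 w\|_{L^q_t(L^p)} + \lambda\|w\|_{L^q_t(L^p)}\bigr),
\]
is derived from the heat-kernel representation and hence only directly applies to $w$ solving the damped heat equation with some source $F$. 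In the contraction step you need to control $\|\nabla(v_1-v_2)\|_{L^\infty_{t,x}}$ for \emph{generic} $v_1,v_2$ in your iteration space. The clean fix is either (i) include $\|\partial_t w\|_{L^q_t(L^p)}$ on the right-hand side (then for any $w\in Y$ with $w(0)=0$ one sets $F:=\partial_t w - \tfrac12\Delta w + \lambda w$ and the representation holds), or (ii) build the norm $\|v\|_Y$ to include a weighted term $\lambda^{\theta}\|\nabla v\|_{L^\infty_{t,x}}$ and show $\Phi$ respects it. Either route closes the contraction with factor $C\lambda^{-\theta}\|b\|_{L^q_t(L^p)}$ as you claim.

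The extension from the large base value $\lambda_0$ to arbitrary $\lambda>0$ via subintervals is workable but slightly roundabout; since the estimate in the theorem is stated without tracking the $\lambda$-dependence of the constant, nothing more is needed than what you wrote. Your closing remark about why the scheme collapses at $\tfrac{2}{q}+\tfrac{n}{p}=1$ is correct and matches the situation in the literature.
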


The following lemma was proven in \cite{Fedrizzi2013-cz} (Lemma 3.4 p 717) using a decay properties of the heat semi-group and a generalized Gr\"{o}nwall inequality.
\begin{lem}\label{lem:Lipschitz-decay}
  Let $u_\lambda$ be the unique solution to (\ref{eq:parabolic-problem}), with $b\in L^q_t(L^p)$ and $p,q$ satisfying (\ref{eq:Krylov-Rockner}), then the following limit holds
  \begin{equation}\label{eq:Lipschitz-decay}
    \lim_{\lambda\to \infty} \|u_{\lambda}\|_{L^\infty_t(W^{1,\infty})}  = 0
  \end{equation}
\end{lem}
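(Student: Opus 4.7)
The plan is to extract $\lambda$-decay from a Duhamel (mild) representation of $u_\lambda$, where the damped heat kernel $e^{-\lambda\tau}\tau^{-\alpha}$ contributes a small prefactor to a self-improving integral inequality for $\|\nabla u_\lambda\|_{L^\infty_t(L^\infty)}$. The Krylov--R\"ockner condition is precisely what makes the resulting scaling exponent in $\lambda$ strictly negative, so the argument closes by absorption. I would first reverse time via $v(t,x) := u_\lambda(T-t,x)$, reducing (\ref{eq:parabolic-problem}) to a forward damped heat equation with zero initial data and source $b(T-\cdot) + b(T-\cdot)\cdot\nabla v$. Duhamel's formula then yields
\[
u_\lambda(t) = \int_t^T e^{-\lambda(s-t)}\, e^{(s-t)\Delta/2}\bigl[b(s) + b(s)\cdot\nabla u_\lambda(s)\bigr]\,ds.
\]
Setting $\theta := \tfrac{1}{2} + \tfrac{n}{2p}$ and applying the standard $L^p\to L^\infty$ gradient heat-kernel bound $\|\nabla e^{\tau\Delta/2} g\|_{L^\infty} \leqc \tau^{-\theta}\|g\|_{L^p}$, and writing $M(t) := \|\nabla u_\lambda(t)\|_{L^\infty}$, I obtain
\[
M(t) \leqc \int_t^T e^{-\lambda(s-t)}(s-t)^{-\theta}\|b(s)\|_{L^p}\bigl(1 + M(s)\bigr)\,ds.
\]

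The next step is to apply H\"older in $s$ with exponents $(q,q')$. The key algebraic fact is that $\tfrac{2}{q}+\tfrac{n}{p} < 1$ is equivalent to $q'\theta < 1$, so the kernel $(s-t)^{-q'\theta}$ is integrable near zero; the change of variables $u = q'\lambda s$ then yields
\[
\sup_{t\in[0,T]}\Bigl(\int_t^T e^{-q'\lambda(s-t)}(s-t)^{-q'\theta}\,ds\Bigr)^{1/q'} \leqc \lambda^{\theta - 1/q'},
\]
with $\theta - 1/q' = \tfrac{1}{2}\bigl(\tfrac{n}{p}+\tfrac{2}{q}-1\bigr) < 0$. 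Denoting by $\delta_\lambda := \lambda^{\theta - 1/q'}\|b\|_{L^q_t(L^p)}$ the resulting constant and using $(1 + M(s)) \leq 1 + \|M\|_{L^\infty_t}$ pulled out of the integral, one arrives at $\|M\|_{L^\infty_t} \leqc \delta_\lambda(1 + \|M\|_{L^\infty_t})$. For $\lambda$ large enough that the prefactor is below $\tfrac{1}{2}$, one absorbs and concludes $\|\nabla u_\lambda\|_{L^\infty_t(L^\infty)} \leqc \delta_\lambda \to 0$. The $L^\infty$ bound on $u_\lambda$ itself then follows from the same argument using the ungradiented estimate $\|e^{\tau\Delta/2}g\|_{L^\infty} \leqc \tau^{-n/(2p)}\|g\|_{L^p}$; since $n/(2p) < \theta$, the resulting power of $\lambda$ is even more negative and no new difficulty arises.

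The most delicate point is the strictness of the Krylov--R\"ockner inequality: if $\tfrac{2}{q}+\tfrac{n}{p} = 1$, the rescaling would produce $\lambda^0$, so the H\"older integral would merely be bounded rather than small and absorption would fail. A secondary technical concern is that the inequality for $M(t)$ should be interpreted using the a priori regularity guaranteed by Theorem \ref{thm:exist-unique}; since $2/q + n/p < 1$ forces $p > n$, Morrey embedding yields $W^{2,p} \hookrightarrow W^{1,\infty}$, and parabolic time regularity makes $M(t)$ well-defined for every $t$, so the supremum in time is legitimate. The vector-valued character of $u_\lambda$ and the nonlinear term $b\cdot\nabla u_\lambda$ contribute only the factor $(1+M(s))$, which is handled by the same absorption.
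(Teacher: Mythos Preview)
Your argument is correct and follows essentially the same route the paper attributes to \cite{Fedrizzi2013-cz}: Duhamel representation plus the $L^p\to L^\infty$ gradient decay of the damped heat semigroup, with the Krylov--R\"ockner condition making the scaling exponent $\theta-1/q' = \tfrac{1}{2}(\tfrac{n}{p}+\tfrac{2}{q}-1)$ strictly negative. The only difference is in how you close: the cited reference uses a generalized (singular-kernel) Gr\"onwall inequality on the Volterra bound for $M(t)$, whereas you pull out $\sup_t M(t)$ and absorb. Your version is more elementary but tacitly requires $\|M\|_{L^\infty_t}<\infty$ a priori to make the absorption meaningful; since Theorem~\ref{thm:exist-unique} together with Morrey only places $M\in L^q_t$, your appeal to ``parabolic time regularity'' is doing real work that should be spelled out (e.g.\ by first running the estimate for smooth approximations $b_n\to b$, or by simply invoking the Gr\"onwall form, which handles the inequality without assuming the supremum is finite).
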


The Lipschitz bound provided by Lemma \ref{lem:Lipschitz-decay} will be useful in proving a number of additional properties of $u_\lambda$. Specifically, the following uniform estimates, giving precise decay of $u_\lambda$ and it's derivatives in various norms.

\begin{lem}\label{lem:precise-decay}
  Let $u_\lambda$ be the unique solution to (\ref{eq:parabolic-problem}) with $b\in L^q_t(L^p)$ with $p,q$ satisfying (\ref{eq:Krylov-Rockner}). Let $\alpha \in \{0,1\}$ and $r\in [1,p]$, or $\alpha = 2$ and $r\in[1,p)$, then the following estimate holds
  \begin{equation}\label{eq:lamulam-bound}
    \sup_{\lambda >0}\lambda^{\delta} \|\nabla^{\alpha}u_{\lambda}\|_{L^q_t(L^r)} < \infty, \quad \text{for}\quad \delta = 1-\frac{\alpha}{2} + \frac{n}{2}\left(\frac{1}{r}- \frac{1}{p}\right)
  \end{equation}
\end{lem}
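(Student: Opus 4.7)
The plan is to use the mild (Duhamel) representation of $u_\lambda$ together with heat-kernel smoothing estimates and Young's inequality in time, absorbing the nonlinear term $b\cdot\nabla u_\lambda$ by means of Lemma~\ref{lem:Lipschitz-decay}. Writing $P_\tau = e^{\tau\Delta/2}$ for the heat semigroup, the solution to the damped backward parabolic problem (\ref{eq:parabolic-problem}) satisfies
\begin{equation*}
  u_\lambda(t) = \int_t^T e^{-\lambda(s-t)} P_{s-t}\bigl[b(s) + b(s)\cdot\nabla u_\lambda(s)\bigr]\, ds.
\end{equation*}

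First I would use Lemma~\ref{lem:Lipschitz-decay} to reduce to the linear problem. Indeed, for $\lambda$ sufficiently large we have $\|\nabla u_\lambda\|_{L^\infty_t(L^\infty)}\leq \tfrac{1}{2}$, so the effective source $F := b + b\cdot\nabla u_\lambda$ satisfies $\|F\|_{L^q_t(L^p)} \leq \tfrac{3}{2}\|b\|_{L^q_t(L^p)}$, uniformly in $\lambda$. It then suffices to bound $\nabla^\alpha V_\lambda$, where $V_\lambda(t) = \int_t^T e^{-\lambda(s-t)}P_{s-t}F(s)\,ds$, by $\lambda^{-\delta}\|F\|_{L^q_t(L^p)}$.

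Next I would apply the classical heat-kernel smoothing bound
\begin{equation*}
  \|\nabla^\alpha P_\tau g\|_{L^r} \leqc \tau^{-\beta}\|g\|_{L^p}, \qquad \beta := \frac{\alpha}{2} + \frac{n}{2}\Big(\frac{1}{p}-\frac{1}{r}\Big),
\end{equation*}
obtained by Young's inequality using the scaling $\|\nabla^\alpha p_\tau\|_{L^s}\leqc \tau^{-\alpha/2-(n/2)(1-1/s)}$ of the Gaussian kernel. Combining this with generalized Minkowski (to bring the $L^r$ norm under the $\tau$-integral), and then Young's inequality in time against the $L^1_\tau$ function $e^{-\lambda\tau}\tau^{-\beta}$ (whose $L^1_\tau$ norm computes to $\Gamma(1-\beta)\lambda^{\beta-1}$, provided $\beta<1$), yields
\begin{equation*}
  \|\nabla^\alpha V_\lambda\|_{L^q_t(L^r)} \leqc \lambda^{\beta-1}\|F\|_{L^q_t(L^p)} = \lambda^{-\delta}\|F\|_{L^q_t(L^p)},
\end{equation*}
which is the required bound. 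Finally, the regime of small $\lambda$ is handled by the uniform Calderón--Zygmund-type bound $\|u_\lambda\|_{L^q_t(W^{2,p})} \leqc \|b\|_{L^q_t(L^p)}$ provided by Theorem~\ref{thm:exist-unique}, combined with Sobolev embedding or interpolation to access the full range of $r$.

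The main obstacle is ensuring that the integrability threshold $\beta<1$ is preserved across the entire range of $(\alpha,r)$ in the statement, which is precisely what forces the restriction $r<p$ (strict) in the case $\alpha=2$: the exponent $\beta$ hits the marginal value $1$ there, producing a logarithmic divergence of the time integral at $\tau=0$. For $\alpha\in\{0,1\}$, convergence is secured by the Krylov--R\"{o}ckner condition $\tfrac{2}{q}+\tfrac{n}{p}<1$, which guarantees $\beta<1$ uniformly. A secondary subtlety is the closure of the fixed-point argument for absorbing $b\cdot\nabla u_\lambda$: the reduction in Step~1 must be iterated consistently with the $L^q_t(L^r)$ bound being proved, but the uniform $W^{1,\infty}$ smallness from Lemma~\ref{lem:Lipschitz-decay} makes this straightforward.
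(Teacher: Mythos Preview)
Your approach is essentially identical to the paper's: mild (Duhamel) form, the heat-semigroup bound $\|\nabla^\alpha P_\tau g\|_{L^r}\lesssim\tau^{\delta-1}\|g\|_{L^p}$, Young's convolution inequality in time against $e^{-\lambda\tau}\tau^{\delta-1}\in L^1_\tau$ (producing the factor $\lambda^{-\delta}$), and absorption of the nonlinear term $b\cdot\nabla u_\lambda$ through the Lipschitz smallness of Lemma~\ref{lem:Lipschitz-decay}. The paper is marginally more economical: rather than splitting into large and small $\lambda$, it simply carries the factor $(1+\|\nabla u_\lambda\|_{L^\infty_t(L^\infty)})$ through the estimate for \emph{all} $\lambda>0$ and bounds it uniformly via Lemma~\ref{lem:Lipschitz-decay}, which sidesteps your separate small-$\lambda$ step (note that Sobolev embedding goes the wrong way for $r<p$, so that part of your sketch would not close as written, and your appeal to the Krylov--R\"ockner condition to secure $\beta<1$ for $\alpha\in\{0,1\}$ is unnecessary since $\beta\le\alpha/2\le\tfrac12$ automatically once $r\le p$).
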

\begin{proof}
  In what follows, to simplify notation, we will instead prove the Lemma for the forwards parabolic problem
  \begin{equation}\label{eq:forward-parabolic}
    \partial_tu_{\lambda} - b\cdot\nabla u_{\lambda} - \frac{1}{2}\Delta u_\lambda  = b - \lambda u_\lambda,\quad u_{\lambda}|_{t=0} = 0,
  \end{equation}
  which is related to the backwards problem (\ref{eq:parabolic-problem}) by time-reversal $u_\lambda(T-t,x)$ and hence Theorem \ref{thm:exist-unique} and Lemma \ref{lem:Lipschitz-decay} and also apply to (\ref{eq:forward-parabolic}).

  We begin by writing (\ref{eq:forward-parabolic}) as
  \begin{equation}
    u_\lambda(t) = \int_0^t e^{-\lambda(t-s)}P_{t-s}(b(s) + b(s)\cdot\nabla u_{\lambda}(s))\ds
  \end{equation}
  Where $P_{t}$ denotes the heat semi-group, defined by
  \begin{equation}
    P_t\phi(x) = \frac{1}{(4\pi t)^{n/2}}\int_{\R^n} e^{-|x-y|^2/4 t}\phi(y)\dy.
  \end{equation}
  Using the well-known estimate
  \begin{equation}\label{eq:heat-group-est}
    \|\nabla^{\alpha}P_tg\|_{L^r} \leqc t^{- \frac{\alpha}{2} + \frac{n}{2}\left(\frac{1}{r} - \frac{1}{p}\right)}\|g\|_{L^p} = t^{\delta-1}\|g\|_{L^p}
  \end{equation}
  we may conclude
  \begin{equation}\label{eq:Lr-bound-mild}
    \|\nabla^\alpha u_{\lambda}(t)\|_{L^r}\leqc (1 +\|\nabla u_{\lambda}\|_{L^\infty_t(L^\infty)})\int_{0}^t e^{-\lambda(t-s)}(t-s)^{\delta-1}\|b(s)\|_{L^p}\ds
  \end{equation}
  Taking the $L^q$ norm in time and on both sides of the inequality, using Young's inequality and the fact that
  \begin{equation}
    \|e^{-\lambda t}t^{\delta-1}\|_{L^1_t} \leqc \lambda^{-\delta}
  \end{equation}
  implies
  \begin{equation}
    \|\nabla^\alpha u_{\lambda}(t)\|_{L^q_t(L^r)}\leqc \lambda^{-\delta}(1 +\|\nabla u_{\lambda}\|_{L^\infty_t(L^\infty)})\|b\|_{L^q_t(L^p)}.
  \end{equation}
  Multiplying both sides by $\lambda^\delta$, taking the supremum in $\lambda >0$ and applying Lemma \ref{lem:Lipschitz-decay} proves (\ref{eq:lamulam-bound}).
\end{proof}
We also have the following convergence properties of $\lambda u_\lambda$ as $\lambda \to \infty$.

\begin{lem}\label{lem:divlamu-conv}
  Let $u_\lambda$ and $b$ be as in Lemma \ref{lem:precise-decay}, then
  \begin{equation}\label{eq:lamulam}
    \lim_{\lambda \to \infty}\|\lambda u_\lambda - b\|_{L^1_t(L^p)} = 0.
  \end{equation}
  If in addition $b$ satisfies
  \begin{equation}
    \Div b \in L^1_t(L^1),
  \end{equation}
  then
  \begin{equation}\label{eq:div-lam-u}
    \lim_{\lambda\to \infty} \|\Div(\lambda u_{\lambda}) - \Div b\|_{L^{1}_t(L^1)} = 0.
  \end{equation}
\end{lem}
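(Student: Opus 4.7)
The approach centers on the mild formulation of the parabolic problem. Writing $P_\tau$ for the heat semigroup on $\R^n$ and $K_\lambda g(t) := \lambda\int_0^t e^{-\lambda(t-s)}P_{t-s}g(s)\ds$, the forward-time mild form (as in the proof of Lemma \ref{lem:precise-decay}) gives $\lambda u_\lambda = K_\lambda(b + b\cdot\nabla u_\lambda)$. The essential general property I would invoke is that $K_\lambda$ is a contraction on $L^r([0,T]; X)$ for every Banach space $X$ on which $P_\tau$ acts as a $C_0$ semigroup and every $r \in [1, \infty)$, and satisfies $K_\lambda g \to g$ strongly as $\lambda \to \infty$. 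This is shown by first verifying pointwise convergence $K_\lambda g(t) \to g(t)$ for smooth compactly supported $g$ (the measure $\lambda e^{-\lambda\tau}\dee\tau$ concentrates at $\tau = 0$ while $P_\tau g \to g$ by strong continuity), then using density together with the uniform bound $\|K_\lambda\| \leq 1$.

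For the first claim, I would decompose $\lambda u_\lambda - b = (K_\lambda b - b) + K_\lambda(b\cdot\nabla u_\lambda)$. The first summand tends to zero in $L^q_t(L^p)$ by the strong convergence just stated applied to $b$. The second is controlled via the contraction property of $K_\lambda$ by $\|b\cdot\nabla u_\lambda\|_{L^q_t(L^p)} \leq \|b\|_{L^q_t(L^p)}\|\nabla u_\lambda\|_{L^\infty_t(L^\infty)}$, which vanishes by Lemma \ref{lem:Lipschitz-decay}. Since $[0,T]$ is bounded, the embedding $L^q_t\embeds L^1_t$ yields the desired conclusion.

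For the second claim, apply $\Div$ to the mild formulation: because $\Div$ commutes with the spatial convolution $P_\tau$, we obtain the distributional identity $\Div(\lambda u_\lambda) - \Div b = (K_\lambda \Div b - \Div b) + \Div K_\lambda(b\cdot\nabla u_\lambda)$. Under the hypothesis $\Div b \in L^1_t(L^1)$, the first piece vanishes in $L^1_t(L^1)$ by strong convergence of $K_\lambda$ on that space. For the second piece, I would combine two ingredients: the heat kernel bound $\|\nabla K_\tau\|_{L^1_x}\lesssim\tau^{-1/2}$ together with Young's inequality in time yields $\|\Div K_\lambda h\|_{L^1_t(L^1)} \lesssim \sqrt\lambda\,\|h\|_{L^1_t(L^1)}$, and H\"older in space and time, followed by Lemma \ref{lem:precise-decay} applied with $\alpha = 1$ and $r = p'$ (noting $p' \in [1,p]$ since $p > 2$), gives $\|b\cdot\nabla u_\lambda\|_{L^1_t(L^1)} \leq \|b\|_{L^q_t(L^p)}\|\nabla u_\lambda\|_{L^{q'}_t(L^{p'})} \lesssim \lambda^{-(1/2 + n(p-2)/(2p))}$. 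Multiplying the two bounds produces $\|\Div K_\lambda(b\cdot\nabla u_\lambda)\|_{L^1_t(L^1)} \lesssim \lambda^{-n(p-2)/(2p)} \to 0$, again using $p > 2$.

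The hard part is exactly this balance in the second claim: the derivative in $\Div K_\lambda$ loses a factor of $\sqrt\lambda$ from heat-kernel smoothing, and this loss must be beaten by the decay of $\nabla u_\lambda$ measured with the dual integrability $L^{q'}_t(L^{p'})$ rather than the natural $L^q_t(L^p)$. The strict Krylov-R\"ockner inequality, together with $p > 2$, is precisely what produces the positive exponent $n(p-2)/(2p)$ needed to close the estimate.
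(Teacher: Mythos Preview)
Your proof is correct. For the first claim, your argument is essentially the paper's, repackaged via the abstract operator $K_\lambda$: the paper unpacks $K_\lambda b - b$ explicitly as $-e^{-\lambda t}b(t) + \int_0^t \lambda e^{-\lambda(t-s)}[P_{t-s}b(s) - b(t)]\,\ds$ and argues via approximate identities, but the content is identical.

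For the second claim your route genuinely differs. The paper first rewrites $b\cdot\nabla u_\lambda = \Div(b\otimes u_\lambda) - (\Div b)\,u_\lambda$ \emph{before} applying $\Div K_\lambda$, producing four pieces $I_{\lambda,1},\ldots,I_{\lambda,4}$; the two nontrivial ones are controlled using the uniform boundedness of $\|\lambda u_\lambda\|_{L^q_t(L^p)}$ (Lemma~\ref{lem:precise-decay} with $\alpha=0$, $r=p$) together with both one- and two-derivative heat-kernel decay. You instead leave $b\cdot\nabla u_\lambda$ intact, accept the $\sqrt\lambda$ loss from a single heat-kernel derivative, and beat it with the decay $\|\nabla u_\lambda\|_{L^q_t(L^{p'})}\lesssim\lambda^{-1/2-n(p-2)/(2p)}$ from Lemma~\ref{lem:precise-decay} with $\alpha=1$, $r=p'$ (then $L^q_t\embeds L^{q'}_t$ since $q\geq 2$). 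Your argument is leaner: it avoids the integration-by-parts rewriting, avoids the two-derivative heat estimate, and does not need to invoke $\Div b$ in the nonlinear error term. The paper's decomposition, on the other hand, makes visible exactly which part of the error uses $\Div b\in L^1_tL^1$ beyond the leading piece. One small notational slip: what you write as $\|\nabla K_\tau\|_{L^1_x}$ is the gradient of the heat kernel, not of the operator $K_\lambda$ you defined above.
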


\begin{proof}
 As we did in Lemma \ref{lem:precise-decay}, we will instead study the forward problem (\ref{eq:forward-parabolic}) and it's mild form
  \begin{equation}
    u_\lambda(t) = \int_0^t e^{-\lambda(t-s)}P_{t-s} (b(s) + b(s)\cdot\nabla u_{\lambda}(s))\ds.
  \end{equation}
  We prove (\ref{eq:lamulam-bound}) first. We write
  \begin{equation}
      \lambda u_\lambda(t) - b(t) = \int_0^t\lambda e^{-\lambda(t-s)} P_{t-s}b(s) \ds - b(t) +\int_0^t\lambda e^{-\lambda(t-s)} P_{t-s}(b(s)\cdot\nabla u_{\lambda}(s))\ds.
    \end{equation}
    Using the fact that
    \begin{equation}
      \int_0^t \lambda e^{-\lambda(t-s)}\ds = 1-e^{-\lambda t},
    \end{equation}
    this can be written as
    \begin{equation}
      \begin{aligned}
        \lambda u_\lambda(t) - b(t) &= -e^{-\lambda t}b(t) + \int_0^t\lambda e^{-\lambda(t-s)} P_{t-s}(b(s) - b(t))\ds\\
        &\hspace{1in}+\int_0^t\lambda e^{-\lambda(t-s)} P_{t-s}(b(s)\cdot\nabla u_{\lambda}(s))\ds.
      \end{aligned}
    \end{equation}
    Taking the $L^p$ norm of both sides, we find
    \begin{equation}\label{eq:lamulam-b-bound}
      \begin{aligned}
        \|\lambda u_\lambda (t) - b(t) \|_{L^p} &\leqc e^{-\lambda t}\|b(t)\|_{L^p} + \int_0^t \lambda e^{-\lambda(t-s)}\|P_{t-s}b(s) - b(t)\|_{L^p}\ds\\
        &\hspace{1in}+ \int_0^t \lambda e^{-\lambda(t-s)} \|b(s)\|_{L^p}\|\nabla u_{\lambda}(s)\|_{L^\infty}\ds
    \end{aligned}
  \end{equation}
  Applying dominated convergence we find
  \begin{equation}
    e^{-\lambda t}\|b(t)\|_{L^p} \to 0 \quad \text{in}\quad L^1_t,
  \end{equation}
  in addition, the function $t\mapsto \lambda e^{-\lambda t}$ is an integrable approximation of the identity, and therefore
  \begin{equation}
    \int_0^t \lambda e^{-\lambda(t-s)}\|P_{t-s} b(s) - b(t)\|_{L^p}\ds \to \|P_{0}b(t) -b(t)\|_{L^p} = 0,\quad \text{in} \quad L^1_t.
  \end{equation}
  Therefore, taking the $L^1_t$ norm of both sides of (\ref{eq:lamulam-b-bound}) and  applying Young's inequality gives
  \begin{equation}
    \|\lambda u_{\lambda} - b(t)\|_{L^1_t(L^p)} \leqc o(1) + \|\lambda e^{-\lambda t}\|_{L^1_t}\|b\|_{L^q_t(L^p)}\|\nabla u_{\lambda}\|_{L^\infty_t(L^\infty)},
  \end{equation}
  Applying Lemma \ref{lem:Lipschitz-decay} to the above inequality and taking $\lambda \to \infty$ concludes the proof of (\ref{eq:lamulam}).

We now proceed to prove (\ref{eq:div-lam-u}). Similar to the proof of (\ref{eq:lamulam}) we can write
  \begin{equation}
    \begin{aligned}
      \Div (\lambda u_\lambda(t)) - \Div b(t)&= - e^{-\lambda t}\Div b(t) + \int_0^t \lambda e^{-\lambda(t-s)}\left[P_{t-s}\Div b(s) - \Div b(t)\right]\ds\\
      &\hspace{1in}- \int_0^t \lambda e^{-\lambda(t-s)}\Div P_{t-s}(\Div b(s) u_{\lambda}(s))\ds\\
      &\hspace{1in}+ \int_0^t \lambda e^{-\lambda(t-s)}\Div \Div P_{t-s}(b(s)\!\tensor\! u_{\lambda}(s))\ds\\
      &= I_{\lambda,1}(t) + I_{\lambda,2}(t) + I_{\lambda,3}(t) + I_{\lambda,4}(t).
      \end{aligned}
    \end{equation}
    We will now show that each of the terms $I_{\lambda,i}(t)$, $i \in \{1,2,3,4\}$ vanish in $L^1_t(L^r)$ as $\lambda \to \infty$. Upon showing this, the proof will be complete. Using the heat semi-group estimate (\ref{eq:heat-group-est}) and H\"{o}lder's inequality, we obtain the bounds
  \begin{equation}
    \begin{aligned}
      &\|I_{\lambda,1}(t)\|_{L^r} \leq e^{-\lambda t}\|\Div b(t)\|_{L^r},\\
      &\|I_{\lambda,2}(t)\|_{L^r} \leq \int_0^t \lambda e^{-\lambda(t-s)}\|P_{t-s}\Div b(s) - \Div b(t)\|_{L^r}\ds,\\
      &\|I_{\lambda,3}(t)\|_{L^r} \leqc \int_0^te^{-\lambda (t-s)} (t-s)^{- \frac{1}{2} - \frac{n}{2p}}\|\Div b(s)\|_{L^r}\|\lambda u_{\lambda}(s)\|_{L^p}\ds,\\
      &\|I_{\lambda,4}(t)\|_{L^r} \leqc \int_0^te^{-\lambda (t-s)}(t-s)^{-1+\frac{n}{2}\left(\frac{1}{r} - \frac{2}{p}\right)}\|b(s)\|_{L^p}\|\lambda u_{\lambda}(s)\|_{L^p}\ds.
    \end{aligned}
  \end{equation}
  Using the same dominated convergence and approximation of the identity argument for the proof of (\ref{eq:lamulam}) we obtain
  \begin{equation}
    \|I_{\lambda,1}\|_{L^1_t(L^r)}\to 0 \text{ and } \|I_{\lambda,2}\|_{L^1_t(L^r)} \to 0\text{ as }\lambda\to \infty.
  \end{equation}
 For $I_{\lambda,3}(t)$ and $I_{\lambda,4}(t)$, we use Young's inequality to obtain
  \begin{equation}\label{eq:I3-est}
    \begin{aligned}
      \|I_{\lambda,3}\|_{L^1_t(L^r)}&\leqc \|e^{-\lambda t} t^{-\frac{1}{2} -\frac{n}{2p}}\|_{L^{q^\prime}}\|\Div b\|_{L^1_t(L^r)}\|\lambda u_{\lambda}\|_{L^q_t(L^p)}\\
      &\leqc \lambda^{\delta_0} \|\Div b\|_{L^1_t(L^r)}\sup_{\lambda >0}\|\lambda u_{\lambda}\|_{L^q_t(L^p)},
    \end{aligned}
  \end{equation}
  where
  \begin{equation}
    \delta_0 = -\frac{1}{2} + \frac{1}{2}\left(\frac{2}{q} + \frac{n}{p}\right) < 0
  \end{equation}
  by condition (\ref{eq:Krylov-Rockner}), and similarly
  \begin{equation}\label{eq:I4-est}
    \begin{aligned}
      \|I_{\lambda,4}\|_{L^1_t(L^r)} &\leqc \|e^{-\lambda t}t^{-1 + \frac{n}{2}\left(\frac{1}{r} - \frac{2}{p}\right)}\|_{L^{q^\prime/2}_t}\|b\|_{L^q_t(L^p)}\|\lambda u_{\lambda}\|_{L^q_t(L^p)}\\
      &\leqc\lambda^{\delta_1}\|b\|_{L^q_t(L^p)}\sup_{\lambda >0}\|\lambda u_{\lambda}\|_{L^q_t(L^p)},
     \end{aligned}
   \end{equation}
   where
   \begin{equation}
     \delta_1 = -1 + \frac{q}{2} - \frac{n}{2}\left(\frac{1}{r} - \frac{2}{p}\right) \leq -1 + \frac{q}{2} < 0,
   \end{equation}
   also by condition (\ref{eq:Krylov-Rockner}).
   
  It follows from the uniform bound (\ref{eq:lamulam-bound}) of Lemma \ref{lem:divlamu-conv} with $\alpha = 0$ and $r =p$ , that the right-hand side of both (\ref{eq:I3-est}) and (\ref{eq:I4-est}) vanish as $\lambda \to \infty$.
\end{proof}

\subsection{The Transformed Problem}

The backwards parabolic problem (\ref{eq:parabolic-problem}) can be used to transform the SPDE. Indeed, the solution $u_{\lambda}$ can be used to define a family of diffeomorphisms $\phi^{\lambda}_t:\R^n\to \R^n$, defined by
\begin{equation}\label{eq:phi-trans}
  \phi^{\lambda}_t(x) = x + u_\lambda(t,x).
\end{equation}
The diffeomorphism property of $\phi^{\lambda}_t$ is only valid for $\lambda > 0$ large enough and follows from Lemma (\ref{eq:Lipschitz-decay}) since for each $\ep \in (0,1)$, there exists a $\lambda > 0$ such that
\begin{equation}
  \|\nabla u_{\lambda}\|_{L^\infty_t(L^\infty)} < \ep.
\end{equation}
As a consequence, $\nabla\phi_t^\lambda(x) = I + \nabla u_{\lambda}(t,x)$ is an invertible matrix, and satisfies
\begin{equation}\label{eq:det-bound}
  (1-\ep)^n \leq  \det\left(I + \nabla u_\lambda\right) \leq (1+\ep)^n.
\end{equation}

Our strategy will be to transform the solution $f$ to the SPDE (\ref{eq:non-degenerate-SPDE}) by pushing $f(t)$ forward under the diffeomorphism $\phi_t^\lambda$,
\begin{equation}
  h_\lambda(t) = (\phi^{\lambda}_t){}_{\#}f(t).
\end{equation}
Using the equation for $u_\lambda(t)$ one can show that $h_\lambda(t)$ now solves a new It\^{o} type stochastic continuity equation, with a more regular drift. This is achieved at the expense of introducing new diffusion coefficients with minimal regularity, albeit enough regularity to apply the results of Section \ref{sec:renorm-rough-diff}.

Our first step will be determine the equation satisfied by $h_\lambda(t)$ 
\begin{prop}\label{prop:transform-prob}
  Let $f$ be a weak $L^{p^\prime}$ solution to (\ref{eq:non-degenerate-SPDE}) with $b\in L^q_t(L^p)$ with $p$ and $q$ satisfying (\ref{eq:Krylov-Rockner}). In addition let $u_{\lambda}$ be the solution to (\ref{eq:parabolic-problem}) with $\lambda>0$ large enough to that the mapping $x\mapsto \phi_t^\lambda(x) = x + u_\lambda(t,x)$ is invertible. Then the push-forward $h_\lambda(t) = (\phi^\lambda_t)_{\#}f(t)$ is a weak $L^{p^\prime}$ solution to \begin{equation}\label{eq:transformed-stoch-cont}
  \partial_t h_\lambda + \Div(\hat{b}_\lambda h_\lambda) - \frac{1}{2}\partial_i\partial_j(\hat{\sigma}_{\lambda,i}^k\hat{\sigma}_{\lambda,j}^kh_\lambda) + \Div(\hat{\sigma}_\lambda^k h_\lambda)\dot{W}^k = 0, 
\end{equation}
where $\hat{b}_\lambda$ and $\hat{\sigma}^k_\lambda$ are given by
\begin{equation}\label{eq:uhat-sighat}
  \begin{aligned}
    &\hat{b}_\lambda(t,x) = \lambda u_{\lambda}\!\left(t,(\phi^{\lambda}_t){}^{-1}(x)\right)\\ &\hat{\sigma}^k_\lambda(t,x) = \partial_k\phi^{\lambda}_{t}((\phi_{t}^{\lambda})^{-1}(x)).
    \end{aligned}
\end{equation}
\end{prop}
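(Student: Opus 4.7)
The plan is to use the deterministic, time-dependent function $\psi(t,x):=\varphi(\phi^{\lambda}_t(x))$ with $\varphi\in C^\infty_c(\R^n)$ as a test function in the weak form of the equation for $f$, and to exploit the parabolic equation (\ref{eq:parabolic-problem}) for $u_\lambda$ to produce the cancellation that eliminates the rough drift $b$. Since $\phi^\lambda_t$ is deterministic, $\psi$ has no martingale part, so an It\^{o}--Wentzell formula for $\langle f(t),\psi(t)\rangle$ reduces to the ordinary stochastic product rule and simply adds an extra drift contribution $\int_0^t\langle f(s),\partial_s\psi(s)\rangle\,\ds$ to the standard weak identity (\ref{eq:time-integrated-weak-form}), with no cross-variation terms.

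The formal cancellation is short. Writing $\phi^\lambda_t=\mathrm{Id}+u_\lambda$ and applying the chain rule gives
\[
\partial_t\psi + b\cdot\nabla\psi + \tfrac{1}{2}\Delta\psi = \big(\partial_tu_\lambda + b + b\cdot\nabla u_\lambda + \tfrac{1}{2}\Delta u_\lambda\big)\cdot(\nabla\varphi\circ\phi^\lambda_t) + \tfrac{1}{2}\partial_i\phi^\lambda_{t,k}\partial_i\phi^\lambda_{t,l}(\partial_k\partial_l\varphi\circ\phi^\lambda_t),
\]
and substituting (\ref{eq:parabolic-problem}) collapses the bracketed quantity down to $\lambda u_\lambda$, killing $b$ entirely. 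The stochastic contribution is $\int\langle f,\partial_k\psi\rangle\,\dee W^k$ with $\partial_k\psi=\partial_k\phi^\lambda_{t,l}(\partial_l\varphi\circ\phi^\lambda_t)$. Converting each pairing via the defining identity $\langle f, g\circ\phi^\lambda_t\rangle=\langle h_\lambda, g\rangle$ and reading off the coefficients reproduces exactly the weak form of (\ref{eq:transformed-stoch-cont}) with $\hat b_\lambda$ and $\hat\sigma^k_\lambda$ as in (\ref{eq:uhat-sighat}).

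To make this rigorous at the level of weak $L^{p^\prime}$ solutions, I would mollify $u_\lambda$ in $(t,x)$ to obtain $u_\lambda^\ep$, set $\phi^{\lambda,\ep}_t:=\mathrm{Id}+u_\lambda^\ep$ (still a diffeomorphism with uniform bi-Lipschitz bounds via (\ref{eq:det-bound}) and Lemma \ref{lem:Lipschitz-decay} for $\ep$ small and $\lambda$ large), and take $\psi^\ep(t,\cdot):=\varphi\circ\phi^{\lambda,\ep}_t$. Since $\psi^\ep$ is smooth in $x$ and absolutely continuous in $t$, a step-function-in-$t$ approximation of $\psi^\ep$ and refinement in (\ref{eq:time-integrated-weak-form}) produces the stochastic product rule for $\langle f(t),\psi^\ep(t)\rangle$ with the extra $\langle f,\partial_s\psi^\ep\rangle$ drift correction. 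Passing $\ep\to 0$ then uses the mollifier convergences $u_\lambda^\ep\to u_\lambda$ in $L^q_tW^{2,p}_{\loc}$ and $\partial_tu_\lambda^\ep\to\partial_tu_\lambda$ in $L^q_tL^p_{\loc}$ delivered by Theorem \ref{thm:exist-unique}, together with $\nabla u_\lambda^\ep\to\nabla u_\lambda$ pointwise a.e.\ and uniformly bounded in $L^\infty_tL^\infty$ via Lemma \ref{lem:Lipschitz-decay}. Paired against $f\in L^\infty_tL^{p^\prime}_{\loc}$ and the uniformly bounded $\nabla\varphi\circ\phi^{\lambda,\ep}_t$, H\"older in $(t,x)$ and dominated convergence in $\omega$ yield convergence of each drift term in $L^1(\Omega\times[0,T])$, while the It\^{o} integrand converges in $L^2(\Omega\times[0,T])$.

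The main technical difficulty is the quadratic term $\tfrac{1}{2}\partial_i\phi^{\lambda,\ep}_{t,k}\partial_i\phi^{\lambda,\ep}_{t,l}(\partial_k\partial_l\varphi\circ\phi^{\lambda,\ep}_t)$ produced by the chain rule on $\Delta\psi^\ep$: a product of three $u_\lambda^\ep$-dependent factors, none of which converges strongly in $L^\infty$. The uniform $L^\infty_tL^\infty$ control on $\nabla u_\lambda$ from Lemma \ref{lem:Lipschitz-decay} is precisely what allows one to dominate this product and invoke dominated convergence after extracting pointwise a.e.\ convergence of the mollifications; this same bound is what makes $\phi^{\lambda,\ep}_t$ a uniform bi-Lipschitz diffeomorphism and ensures that the limit of the pushed-forward quadratic coefficient agrees with $\sum_k\hat\sigma^k_{\lambda,i}\hat\sigma^k_{\lambda,j}$ as in (\ref{eq:transformed-stoch-cont}). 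Note that the PDE cancellation simplifying the coefficient of $\nabla\varphi\circ\phi^{\lambda,\ep}_t$ to $\lambda u_\lambda^\ep$ can be performed either before or after taking $\ep\to 0$; the former is cleaner, but the latter also works since each of $\partial_tu_\lambda^\ep$, $b\cdot\nabla u_\lambda^\ep$, and $\Delta u_\lambda^\ep$ individually converges in $L^q_tL^p_{\loc}$.
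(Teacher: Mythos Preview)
Your proposal is correct and shares the paper's core idea: test against $\varphi\circ\phi^\lambda_t$, use the parabolic equation (\ref{eq:parabolic-problem}) to collapse $\partial_t\psi+b\cdot\nabla\psi+\tfrac12\Delta\psi$ into $\lambda u_\lambda\cdot(\nabla\varphi\circ\phi^\lambda_t)$ plus the quadratic second-order term, and then change variables via the push-forward identity.

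The only genuine difference is where you regularize. The paper mollifies the \emph{solution}: it works with $f_\ep=\eta_\ep\star f$, applies It\^o's product rule to $f_\ep(t,x)\,\hat\varphi(t,x)$ pointwise in $x$, integrates in space, and then sends $\ep\to 0$ using $f_\ep\to f$ in $L^\infty_t L^{p'}_{\loc}$. You instead mollify the \emph{test function} via $u_\lambda^\ep$, so that $\psi^\ep=\varphi\circ\phi^{\lambda,\ep}_t$ is a legitimate smooth-in-$x$, absolutely-continuous-in-$t$ test function for which the time-dependent weak identity holds directly. Your route keeps $f$ fixed and passes the limit in deterministic quantities ($u_\lambda^\ep$ and its derivatives), which is arguably cleaner since all convergences come straight from Theorem \ref{thm:exist-unique} and Lemma \ref{lem:Lipschitz-decay}; the paper's route keeps the test function fixed and passes the limit in the stochastic object $f_\ep$. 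Both are standard regularization strategies and reach the same weak identity (\ref{eq:transform-eq-final-form}). Your remark that the $L^\infty_t L^\infty$ bound on $\nabla u_\lambda$ is what controls the quadratic term and the bi-Lipschitz uniformity is exactly the point the paper also leans on.
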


\begin{proof}
  The strategy will be to choose a time-dependent test function of the form $\hat{\varphi}(t,x) = \varphi(\phi_t^\lambda(x))$ for equation (\ref{eq:non-degenerate-SPDE}), where $\varphi$ is a smooth compactly supported test function. Of course, since $\phi_t^\lambda(x)$ is not a $C^\infty$ function, $\hat{\varphi}(t,x)$ is not a valid test function, and extra care must be taken to justify it's use in the weak form. This can be remedied by instead studying the mollified equation
  \begin{equation}
    \partial_t f_\ep(t,x) + \Div(b f)_\ep(t,x) - \frac{1}{2}\Delta f_\ep(t,x) + \partial_i f_\ep(t,x) \dot{W}^i = 0.
  \end{equation}
  It is important to note that $f_{\ep}(t,x)$ is defined everywhere and continuous in $x$, and by a straight-forward continuity estimate, has a version which is continuous in time.

  As a consequence we may apply the product rule (It\^{o}'s formula) to the process $t\mapsto f_\ep(t,x)\hat{\varphi}(t,x)$, yielding
  \begin{equation}
    f_\ep(t,x)\hat{\varphi}(t,x) - f_\ep(0,x)\hat{\varphi}(t,x) = \int_0^t f_\ep(s,x)\partial_s\hat{\varphi}(s,x)\ds+ \int_0^t \hat{\varphi}(s,x)\dee f_\ep(s,x).
  \end{equation}
  Using the equation for $f_\ep(t,x)$, integrating in space and time, applying Fubini's theorem to interchange space and time (as well as stochastic) integrals, and integrating by parts gives
  \begin{equation}\label{eq:time-dep-phi-ep}
    \begin{aligned}
      &\langle f_\ep(t),\hat{\varphi}(t)\rangle - \langle f_\ep(0),\varphi\rangle = \int_0^t \langle f_\ep(s),\partial_s \hat{\varphi}(s)\rangle\ds + \int_0^t\langle (f(s) b(s))_{\ep}\cdot\nabla  \hat{\varphi}(s)\rangle \ds \\
      &\hspace{1in}+\frac{1}{2}\int_0^t \langle f_\ep(s), \Delta \hat{\varphi}(s)\rangle\ds + \int_{0}^t \langle f_\ep(s), \partial_i\hat{\varphi}(s)\rangle \dee W^i(s).
    \end{aligned}
  \end{equation}
  It is important to remark that the Lipschitz property of $\phi^\lambda_t(x)$ implies that $\hat{\varphi}(t,x)$ has compact support in $x$ if $\varphi$ does. Moreover, the definition of $\phi_t^\lambda(x)$, Theorem \ref{thm:exist-unique} and Lemma \ref{lem:Lipschitz-decay} imply that
  \begin{equation}
    \begin{aligned}
      &\hat{\varphi}(t) \in L^\infty, \quad \partial_t\hat{\varphi} \in L^q_t(L^p)\\
      &\nabla \hat{\varphi} \in L^\infty_t(L^\infty), \quad \nabla^2\hat{\varphi} \in L^q_t(L^p).
    \end{aligned}
  \end{equation}
  As a consequence, all integrals in (\ref{eq:time-dep-phi-ep}) are well-defined. Furthermore, the fact that
  \begin{equation}
    \E\|f\|_{L^\infty_t(L^{p^\prime})}^r<\infty, \quad \text{for all}\quad r\geq 1.
  \end{equation}
  Allows the passage of the limit as $\ep \to 0$ in each term of (\ref{eq:time-dep-phi-ep}). One obtains
  \begin{equation}\label{eq:time-dep-phi}
    \begin{aligned}
      &\langle f(t),\hat{\varphi}(t)\rangle - \langle f(0),\varphi\rangle = \int_0^t \left\langle f(s),\partial_s \hat{\varphi}(s)+ b(s)\cdot\nabla \hat{\varphi}(s) + \tfrac{1}{2}\Delta\hat{\varphi}(s)\right\rangle\ds\\
        &\hspace{1in}+ \int_{0}^t \langle f(s), \partial_i\hat{\varphi}(s)\rangle \dee W^i(s).
    \end{aligned}
  \end{equation}
  We now use the equation for $u_\lambda(t,x)$ (\ref{eq:parabolic-problem}), to deduce that
    \begin{equation}
  \partial_t \phi^{\lambda}_t + b(t)\cdot\nabla \phi^{\lambda}_t + \frac{1}{2}\Delta\phi^{\lambda}_t = \lambda u_{\lambda}(t).
\end{equation}
and therefore
  \begin{equation}
    \begin{aligned}
      &\partial_t \hat{\varphi}(t) + b(t)\cdot\nabla\hat{\varphi}(t) + \frac{1}{2}\Delta \hat{\varphi}(t)\\
      &\hspace{.5in}= \lambda u_{\lambda}(t)\cdot \nabla \varphi(\phi^\lambda_t) + \frac{1}{2}\left(\partial_k\phi^\lambda_t\!\tensor\!\partial_k\phi^\lambda_t\right) : \nabla^2\varphi(\phi^\lambda_t).
    \end{aligned}
  \end{equation}
Substituting this into equation~(\ref{eq:time-dep-phi}) gives
  \begin{equation}\label{eq:transform-eq-non-push}
    \begin{aligned}
      &\langle f(t),\varphi(\phi^\lambda_t)\rangle - \langle f(0),\varphi\rangle\\
      &\hspace{.5in}= \int_0^t \left\langle f(s), \lambda u_{\lambda }(s) \cdot \nabla \varphi(\phi^\lambda_s) + \tfrac{1}{2}\left(\partial_k\phi^\lambda_s\!\tensor\!\partial_k\phi^\lambda_s\right) : \nabla^2\varphi(\phi^\lambda_s)\right\rangle\ds\\
        &\hspace{.5in}+ \int_{0}^t \langle f(s), \partial_i\phi^\lambda_s\cdot \nabla\varphi(\phi^\lambda_s)\rangle \dee W^i(s).
    \end{aligned}
  \end{equation}
  Finally, using the fact that the push-forward $h_\lambda(t) = (\phi_t^\lambda)_{\#}f(t)$ satisfies
  \begin{equation}
    \langle f(t)g, \varphi(\phi_t^\lambda)\rangle= \langle h_\lambda(t)g((\phi^\lambda_t)^{-1}),\varphi\rangle,
  \end{equation}
  for any $g\in L^p$, gives
  \begin{equation}\label{eq:transform-eq-final-form}
    \begin{aligned}
      &\langle h_\lambda(t),\varphi\rangle - \langle f(0),\varphi\rangle\\
      &\hspace{.5in}= \int_0^t \left\langle h_\lambda(s), \hat{b}_\lambda(s) \cdot \nabla \varphi + \tfrac{1}{2}\left(\hat{\sigma}_\lambda^k(s)\!\tensor\!\hat{\sigma}_\lambda^k(s)\right) : \nabla^2\varphi\right\rangle\ds\\
        &\hspace{.5in}+ \int_{0}^t \langle h_\lambda(s), \hat{\sigma}_\lambda^k(s)\cdot\nabla\varphi\rangle \dee W^i(s),
    \end{aligned}
  \end{equation}
  where $\hat{b}_\lambda$ and $\hat{\sigma}^k_\lambda$ are given by (\ref{eq:uhat-sighat}). This is precisely the weak form of equation (\ref{eq:transformed-stoch-cont}).
\end{proof}

\subsection{Relaxation of the Transformed Problem}

In order to deduce renormalizability for $f$ the strategy will be to send $\lambda \to \infty$, relaxing the transformed process $h_\lambda$ back to $f$ in the limit. In order to do this, we will need the following convergence results.
\begin{lem}\label{lem:Relaxation-Lem}
  Let $f$, $\phi_\lambda$ and $h_\lambda$ be as in Proposition \ref{prop:transform-prob} and let $\hat{b}_\lambda$ and $\hat{\sigma}^k_\lambda$ be given by (\ref{eq:uhat-sighat}).
Then, the following limits hold

\begin{equation}\label{eq:h-lam-conv}
h_\lambda \to f
\end{equation}
$\P\tensor\dt\tensor\dx$ almost everywhere on $\Omega\times[0,T]\times\R^n,$
\begin{align}
    \label{eq:u-lam-conv}\hat{b}_\lambda \to b \quad &\text{in}\quad L^q_t(L^p),\\
  \label{eq:sig-lam-conv}\hat{\sigma}^k_{\lambda,i} \to \delta_{i,k}\quad &\text{in}\quad L^q_t(L^{p}),\\
  \label{eq:sig-lam-nab-conv}\nabla\hat{\sigma}^k_{\lambda} \to 0\quad &\text{in}\quad L^q_t(L^r), \quad\text{for each}\quad r< p.
\end{align}
If, in addition,
\begin{equation}
  \Div b \in L^1_t(L^1),
\end{equation}
then
\begin{equation}\label{eq:div-u-lam-conv}
  \Div\hat{b}_\lambda \to \Div b\quad \text{in}\quad L^1_t(L^1).
\end{equation}
\end{lem}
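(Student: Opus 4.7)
The core observation is that by Lemma~\ref{lem:Lipschitz-decay}, for $\lambda$ sufficiently large one has $\|\nabla u_\lambda\|_{L^\infty_t(L^\infty)} < 1/2$, so that $\phi^\lambda_t = \mathrm{id} + u_\lambda$ and $\Psi^\lambda_t := (\phi^\lambda_t)^{-1}$ are globally bi-Lipschitz with Jacobians pinched between two positive constants independent of $\lambda$. In particular, $\|\Psi^\lambda_t - \mathrm{id}\|_{L^\infty_{t,x}} \to 0$ and change of variables $y = \Psi^\lambda_t(x)$ gives $\|g\circ\Psi^\lambda_t\|_{L^r(\R^n)} \leqc \|g\|_{L^r(\R^n)}$ uniformly in $\lambda$ large, for every $r\in[1,\infty]$. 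I would then prove each assertion by decomposing the difference into a main piece, controlled by one of Lemmas~\ref{lem:Lipschitz-decay}--\ref{lem:divlamu-conv}, and a change-of-variables correction, controlled by the uniform smallness of $\nabla u_\lambda$.

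For (\ref{eq:h-lam-conv}), write $h_\lambda(t,x) = (\det\nabla\Psi^\lambda_t(x))\,f(t,\Psi^\lambda_t(x))$ and note $\det\nabla\Psi^\lambda_t \to 1$ uniformly. Approximating $f(\omega,t,\cdot) \in L^{p^\prime}_{\loc}$ by continuous compactly supported functions and using $\Psi^\lambda_t \to \mathrm{id}$ uniformly yields $h_\lambda \to f$ in $L^{p^\prime}_{\loc}(\Omega\times[0,T]\times\R^n)$, so that a diagonal subsequence converges pointwise $\P\otimes \dt\otimes \dx$-almost everywhere. For (\ref{eq:u-lam-conv}), split
\begin{equation*}
    \hat{b}_\lambda - b = \bigl[(\lambda u_\lambda - b)\!\circ\!\Psi^\lambda_t\bigr] + \bigl[b\!\circ\!\Psi^\lambda_t - b\bigr];
\end{equation*}
the first bracket is handled by change of variables together with an $L^q_t(L^p)$ upgrade of Lemma~\ref{lem:divlamu-conv} (each of the three terms in that proof is bounded in $L^q_t(L^p)$ by using Young's inequality with the $L^1_t$-convolution kernel $\lambda e^{-\lambda t}$ and the strong $L^p$-continuity of the heat semigroup), and the second bracket vanishes by continuity of translation in $L^p$ and dominated convergence in $t$. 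For (\ref{eq:sig-lam-conv}), the identity $\hat{\sigma}^k_{\lambda,i}(t,x) - \delta_{ik} = (\partial_k u_{\lambda,i})(t,\Psi^\lambda_t(x))$ together with change of variables and Lemma~\ref{lem:precise-decay} (with $\alpha=1$, $r=p$) gives the rate $\lambda^{-1/2}$. For (\ref{eq:sig-lam-nab-conv}), the chain rule yields $\nabla\hat{\sigma}^k_\lambda = (\nabla^2 u_\lambda)(\Psi^\lambda_t)\cdot \nabla\Psi^\lambda_t$, with $\nabla\Psi^\lambda_t$ uniformly bounded in $L^\infty$ and $\|\nabla^2 u_\lambda\|_{L^q_t(L^r)} \to 0$ for any $r<p$ by Lemma~\ref{lem:precise-decay} with $\alpha=2$.

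For (\ref{eq:div-u-lam-conv}), direct computation gives
\begin{equation*}
    \Div\hat{b}_\lambda(x) = \mathrm{tr}\bigl[\nabla(\lambda u_\lambda)(\Psi^\lambda_t(x))\cdot\nabla\Psi^\lambda_t(x)\bigr] = \bigl(\Div(\lambda u_\lambda)\bigr)(\Psi^\lambda_t(x)) + E_\lambda(x),
\end{equation*}
where, using the expansion $\nabla\Psi^\lambda_t = I - K_\lambda\!\circ\!\Psi^\lambda_t$ with $K_\lambda := \nabla u_\lambda(I+\nabla u_\lambda)^{-1}$, the error takes the form $E_\lambda(x) = -\mathrm{tr}\bigl[\nabla(\lambda u_\lambda)K_\lambda\bigr](\Psi^\lambda_t(x))$. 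The main term converges to $\Div b$ in $L^1_t(L^1)$: the replacement $\Div(\lambda u_\lambda)\to\Div b$ is Lemma~\ref{lem:divlamu-conv}, and the removal of the composition with $\Psi^\lambda_t$ uses continuity of translation in $L^1$ applied to $\Div b$. The error $E_\lambda$ is estimated, after change of variables, by pairing $\|K_\lambda\|_{L^\infty_{t,x}}\to 0$ (Lemma~\ref{lem:Lipschitz-decay}) against $\|\nabla(\lambda u_\lambda)\|$ measured in an appropriate norm, via a Hölder split using the uniform bounds of Lemma~\ref{lem:precise-decay} and the Krylov--Röckner condition.

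The main obstacle is the error estimate in (\ref{eq:div-u-lam-conv}): Lemma~\ref{lem:Lipschitz-decay} provides only qualitative decay of $K_\lambda$ with no explicit rate, while $\|\lambda\nabla u_\lambda\|$ may grow in $\lambda$ in the natural scale-invariant norms (from Lemma~\ref{lem:precise-decay} with $\alpha=1$ one has $\|\lambda\nabla u_\lambda\|_{L^q_t(L^r)} \lesssim \lambda^{1/2 + \frac{n}{2}(\frac{1}{r}-\frac{1}{p})}$). The estimate must be carried out by balancing the $L^\infty$ smallness of $K_\lambda$ with a judicious choice of space-time exponents that renders the product $\|K_\lambda\|_{L^\infty}\cdot\|\lambda\nabla u_\lambda\|$ integrable; this is where the strict Krylov--Röckner condition $\tfrac{2}{q}+\tfrac{n}{p}<1$ is essential.
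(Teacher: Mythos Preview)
Your overall strategy---decompose into a ``main term'' controlled by Lemmas~\ref{lem:Lipschitz-decay}--\ref{lem:divlamu-conv} plus a ``change-of-variables correction''---is exactly the paper's, and your treatment of (\ref{eq:h-lam-conv})--(\ref{eq:sig-lam-nab-conv}) and of the main term in (\ref{eq:div-u-lam-conv}) is correct and matches the paper (indeed you are more careful than the paper on two points: the upgrade of Lemma~\ref{lem:divlamu-conv} from $L^1_t(L^p)$ to $L^q_t(L^p)$, and the pointwise-a.e.\ claim in (\ref{eq:h-lam-conv})).

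The one genuine gap is your estimate of the error $E_\lambda$ in (\ref{eq:div-u-lam-conv}). You propose to pair the \emph{qualitative} smallness $\|K_\lambda\|_{L^\infty_{t,x}}\to 0$ against a norm of $\lambda\nabla u_\lambda$. This does not close: Lemma~\ref{lem:Lipschitz-decay} gives no rate on $\|K_\lambda\|_{L^\infty}$, while from Lemma~\ref{lem:precise-decay} with $\alpha=1$, $r=1$ one only gets $\|\lambda\nabla u_\lambda\|_{L^q_t(L^1)}\lesssim\lambda^{1/2-(n/2)(1-1/p)}$, which \emph{grows} when $n=1$ (e.g.\ like $\lambda^{1/(2p)}$). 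No choice of exponents rescues this if one of the two factors carries only qualitative decay. Nor is the strict Krylov--R\"{o}ckner condition the relevant ingredient here.

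The fix is to stop treating $K_\lambda$ as an $L^\infty$ object and instead exploit its structure: $K_\lambda=\nabla u_\lambda(I+\nabla u_\lambda)^{-1}$ already contains a factor of $\nabla u_\lambda$, and $(I+\nabla u_\lambda)^{-1}$ is bounded by $2$ in $L^\infty$ for $\lambda$ large. Hence, up to the harmless composition with $\Psi^\lambda_t$,
\[
|E_\lambda|\;\lesssim\;\lambda\,|\nabla u_\lambda|^2,
\]
and now apply Lemma~\ref{lem:precise-decay} with $\alpha=1$, $r=2$ \emph{quantitatively}:
\[
\lambda\,\|[\nabla u_\lambda]^2\|_{L^1_t(L^1)}\;\le\;\lambda\,\|\nabla u_\lambda\|_{L^2_t(L^2)}^2\;\lesssim\;\lambda\cdot\lambda^{-1-n(1/2-1/p)}\;=\;\lambda^{-n(1/2-1/p)}\to 0,
\]
using only $p>2$. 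This is precisely the paper's argument; once you make this substitution your proof is complete.
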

\begin{proof}
For the first limit (\ref{eq:h-lam-conv}), recall the push-forward is given by
  \begin{equation}
    h_\lambda(t,x) = f(t,(\phi^{\lambda}_t)^{-1}(x))\det\left(\nabla(\phi^{\lambda}_t)^{-1}(x)\right).
  \end{equation}
  It follows from (\ref{eq:Lipschitz-decay}) that, pointwise almost surely in $\R^n$,
  \begin{equation}
    (\phi^\lambda_t)^{-1}(x)\to x, \quad \det\left(\nabla(\phi^{\lambda}_t)^{-1}(x)\right) \to 1,
    \end{equation}
 therefore limit (\ref{eq:h-lam-conv}) follows. The limit (\ref{eq:u-lam-conv}) follows from the identity
  \begin{equation}
      \hat{b}_\lambda(t,x) = \lambda u_{\lambda}\left((\phi^\lambda_t)^{-1}(x)\right)
    \end{equation}
    after applying Lemmas \ref{lem:divlamu-conv} and \ref{lem:diffeo-lam-conv}. The limit (\ref{eq:sig-lam-conv}) follows similarly by writing
    \begin{equation}
      \hat{\sigma}^k_{\lambda,i}(t,x) = \delta_{k,i} + \partial_ku_{\lambda}\left((\phi^\lambda_t)^{-1}(x)\right),
    \end{equation}
    and applying the following result of Lemma \ref{lem:precise-decay} with $\alpha = 1$ and $r= p$,
    \begin{equation}
      \|\partial_ku_{\lambda}\left((\phi^\lambda_t)^{-1}(x)\right)\|_{L^q_t(L^p)} \leqc \|\partial_k u_{\lambda}\|_{L^q_{t}(L^p)} \leqc \lambda^{-\frac{1}{2}} \to 0.
    \end{equation}
In order to prove the limit (\ref{eq:sig-lam-nab-conv}), we first write 
\begin{equation}\label{eq:nabla-sigma-k-id}
    \nabla\hat{\sigma}^k_\lambda(t,x) = \left[ I + \nabla u_{\lambda}(t,(\phi_t^\lambda)^{-1}(x))\right]^{-1}\left[\partial_k\nabla u_{\lambda}\left(t,(\phi_t^\lambda)^{-1}(x)\right)\right].
  \end{equation}
  We may assume that $\lambda$ is large enough so that
  \begin{equation}
    \|\nabla u_{\lambda}\|_{L^\infty_t(L^\infty)} < \frac{1}{2}.
  \end{equation}
  It follows that
  \begin{equation}
    [I - \nabla u_{\lambda}(t,(\phi^{\lambda}_t)^{-1}(x))]^{-1} = \sum_{j\geq 0} [-\nabla u_{\lambda}(t,\phi^{\lambda}_t(x))]^j,
  \end{equation}
  and therefore
  \begin{equation}\label{eq:I-nabu-bound}
      \|[I - \nabla u_{\lambda}(t,(\phi^{\lambda}_t)^{-1}(x))]^{-1}\|_{L^\infty_t(L^\infty)} \leq \frac{1}{1- \|\nabla u_{\lambda}(t,(\phi^{\lambda}_t)^{-1}(x))\|_{L^\infty_t(L^\infty)}} \leq 2.
    \end{equation}
  The bound (\ref{eq:I-nabu-bound}) as well as the following consequence of Lemma \ref{lem:precise-decay} with $\alpha = 2$ and $r < p$,
  \begin{equation}
    \|\partial_k\nabla u_{\lambda}\left((\phi^\lambda_t)^{-1}(x)\right)\|_{L^q_t(L^r)}\leqc  \|\partial_k\nabla u_{\lambda}\|_{L^q_t(L^r)} \leqc \lambda^{-\frac{n}{2}\left(\frac{1}{r} - \frac{1}{p}\right)} \to 0,
  \end{equation}
 implies  limit (\ref{eq:sig-lam-nab-conv}).

  The last limit (\ref{eq:div-u-lam-conv}) is the most subtle. We begin by writing
  \begin{equation}
    \begin{aligned}
      \Div\hat{b}_{\lambda}(t,x) &= \lambda \Tr\left(\left[\nabla u_{\lambda}(t,(\phi^{\lambda}_t)^{-1}(x))\right]\left[I+\nabla u_{\lambda}(t,(\phi^{\lambda}_t)^{-1}(x))\right]^{-1}\right)\\
      &= \Div(\lambda u_{\lambda})(t,(\phi^{\lambda}_t)^{-1}(x))\\
      &\hspace{.5in}+ \lambda \Tr\left(\left[\nabla u_{\lambda}(t,(\phi^{\lambda}_t)^{-1}(x))\right]\left(\left[I+\nabla u_{\lambda}(t,(\phi^{\lambda}_t)^{-1}(x))\right]^{-1}- I\right)\right)\\
      &= \Div(\lambda u_{\lambda})(t,(\phi^{\lambda}_t)^{-1}(x))\\
      &\hspace{.5in}- \lambda \Tr\left(\left[\nabla u_{\lambda}(t,(\phi^{\lambda}_t)^{-1}(x))\right]^2\left[I+\nabla u_{\lambda}(t,(\phi^{\lambda}_t)^{-1}(x))\right]^{-1}\right).
      \end{aligned}
    \end{equation}
    The first term above converges to $\Div b$ in $L^1_t(L^1)$ by Lemma \ref{lem:divlamu-conv} and Lemma \ref{lem:diffeo-lam-conv}. We can deal with the second term by applying the bound (\ref{eq:I-nabu-bound}), and using Lemma \ref{lem:precise-decay} to conclude
    \begin{equation}
      \lambda\|[\nabla u_{\lambda}(t,(\phi^{\lambda}_t)^{-1}(x))]^2\|_{L^1_t(L^1)} \leqc \lambda\|[\nabla u_{\lambda}]^2\|_{L^1_t(L^1)} \leq \lambda \|\nabla u_{\lambda}\|^2_{L^q_t(L^{2})} \leqc \lambda^{\delta_0}
    \end{equation}
    where
    \begin{equation}
      \delta_0 = - n\left(\frac{1}{2} - \frac{1}{p}\right) < 0.
    \end{equation}
    Again applying Lemma \ref{lem:diffeo-lam-conv} concludes the proof of the limit (\ref{eq:div-u-lam-conv}).
  \end{proof}

  \subsection{Renormalization and Proof of Theorem \ref{thm:main-theorem}}
 We are now in a position to apply the results of Section \ref{sec:renorm-rough-diff} to the transformed problem (\ref{eq:transformed-stoch-cont}). Indeed, if $f$ is a weak $L^{p^\prime}$ solution to (\ref{eq:non-degenerate-SPDE}), then by Proposition \ref{prop:transform-prob} the push-forward $h_\lambda(t) = (\phi^\lambda_t)_{\#}f(t)$ is a weak $L^{p^\prime}$ solution to (\ref{eq:transformed-stoch-cont}). It is also a straightforward application of the formulas (\ref{eq:uhat-sighat}) and the regularity results of Theorem \ref{thm:exist-unique} and Lemma (\ref{eq:Lipschitz-decay}) to conclude that
\begin{equation}
  \hat{b}_\lambda \in L^q_t(W^{2,p}), \quad \hat{\sigma}^k_\lambda \in L^q_t(W^{1,p}).
\end{equation}
It is worth remarking that since 
\begin{equation}
  1- \frac{d}{p} \equiv \alpha > 0,
\end{equation}
then, by a Sobolev embedding we have
\begin{equation}
  \hat{b}_{\lambda} \in C^{1,\alpha},\quad \hat{\sigma}^k_\lambda\in C^{0,\alpha}.
\end{equation}
Therefore, while the drift $\hat{b}_\lambda$ is nice and regular, the diffusion coefficients $\hat{\sigma}_\lambda^k$ are not regular enough to apply classical strong existence and uniqueness techniques to the corresponding SDE.

However, we are in a regime where we can apply the results of Theorem \ref{thm:main-renorm-result} to obtain renormalizability of $h_\lambda$. Then, using the relaxation properties of Lemma \ref{lem:Relaxation-Lem}, we will then pass $\lambda \to \infty$ in the renormalized form for $h_{\lambda}$.

\begin{proof}[Proof of Theorem \ref{thm:main-theorem}]
    Recall from Proposition \ref{prop:transform-prob}, the  transformed solution $h_\lambda(t) = (\phi_t^\lambda)_{\#}f(t)$ is a weak $L^{p^\prime}$ solution to the equation
    \begin{equation}
      \partial_t h_\lambda + \Div(\hat{b}_\lambda h_{\lambda}) - \frac{1}{2}\partial_i\partial_j(\hat{\sigma}^k_{\lambda,i}\hat{\sigma}^k_{\lambda,j} h_{\lambda}) + \Div(\hat{\sigma}^k_{\lambda} h_{\lambda})\dot{W}^k = 0.
    \end{equation}
    More, as discussed at the beginning of this section, $\hat{b}_{\lambda}$ and $\hat{\sigma}_{\lambda}^k$ satisfy
    \begin{equation}
  \hat{b}_\lambda \in L^q_t(W^{2,p}), \quad \hat{\sigma}^k_\lambda \in L^q_t(W^{1,p}).
\end{equation}
We may conclude that $h_\lambda$ satisfies the renormalized equation
\begin{equation}\label{eq:renorm-hlam}
  \begin{aligned}
    &\langle \Gamma(h_\lambda(t)),\varphi\rangle = \langle \Gamma(h_\lambda(0)),\varphi\rangle + \int_0^t\langle  \Gamma(h_\lambda)\hat{b}_\lambda,\nabla\varphi\rangle\ds\\
    &\hspace{.5in}+ \frac{1}{2}\int _0^t\langle\Gamma(h_\lambda)\hat{\sigma}^k_{\lambda,i}\hat{\sigma}_{\lambda,j}^k,\partial_i\partial_j\varphi\rangle\ds + \int_0^t\langle\Gamma(h_\lambda)\hat{\sigma}_\lambda^k,\nabla\varphi\rangle \dee W^k\\
    &\hspace{.5in}- \int_0^t\langle G(h_\lambda)\Div \hat{\sigma}_\lambda^k,\varphi\rangle\dee{W}^k -\int_0^t\langle G(h_\lambda)\Div \hat{b}_\lambda,\varphi\rangle\ds\\
    &\hspace{.5in}- \int_0^t\langle G(h_\lambda)(\Div\hat{\sigma}_\lambda^k)\hat{\sigma}_\lambda^k,\nabla\varphi\rangle\ds + \frac{1}{2}\int_0^t\langle G(h_\lambda)\partial_i\hat{\sigma}^k_{\lambda,j}\partial_j\hat{\sigma}^k_{\lambda,i},\varphi\rangle\ds\\
    &\hspace{.5in}+ \frac{1}{2}\int_0^t\langle H(h_\lambda)(\Div\sigma_\lambda^k)^2,\varphi\rangle\ds.
  \end{aligned}
\end{equation}
We will assume at this point, that $\Gamma(z)$ is $C^2$ and satisfies
\begin{equation}\label{eq:Gamma-bound}
  \sup_{z} \left(|\Gamma(z)| +  |z\Gamma^{\prime}(z)| +|z^2\Gamma^{\prime\prime}(z)|\right) <\infty.
\end{equation}
As a consequence $G(z)$ and $H(z)$ are bounded functions. Our goal will now be to pass the limit as $\lambda \to \infty$ in the weak form of (\ref{eq:renorm-hlam}) as $\lambda \to \infty$ using Lemma \ref{lem:Relaxation-Lem}. 

To begin, we remark that the pointwise convergence of $h_\lambda$ given in (\ref{eq:h-lam-conv}) along with the fact that
\begin{equation}
  \sup_{\lambda} \|\Gamma(h_{\lambda})\|_{L^\infty_{t,x}} < \infty
\end{equation}
allows us to use the product limit Lemma \ref{lem:prod-lim} in conjunction with Lemma \ref{lem:Relaxation-Lem} to obtain the following $\P$ almost sure limits
\begin{equation}
  \begin{aligned}
    \langle \Gamma(h_\lambda)\hat{b}_\lambda ,\nabla\varphi\rangle &\to \langle \Gamma(f)b , \nabla \varphi\rangle,\quad \text{in}\quad L^q_t,\\
    \langle \Gamma(h_\lambda)\hat{\sigma}^k_\lambda ,\nabla\varphi\rangle &\to \langle \Gamma(f), \partial_k \varphi\rangle,\quad \text{in}\quad L^q_t,\\ \langle\Gamma(h_\lambda)\hat{\sigma}^k_{\lambda,i}\hat{\sigma}_{\lambda,j}^k,\partial_i\partial_j\varphi\rangle &\to \langle\Gamma(f),\Delta \varphi\rangle,\quad \text{in}\quad L^{\frac{q}{2}}_t,\\
    \langle G(h_\lambda)\Div \hat{b}_\lambda,\varphi\rangle &\to \langle G(f)\Div b,\varphi\rangle,\quad \text{in}\quad L^1_t,
  \end{aligned}
\end{equation}
as well as, $\P$ almost surely
\begin{equation}
  \begin{aligned}
    \langle G(h_\lambda)\Div \hat{\sigma}^k_\lambda ,\varphi\rangle &\to 0,\quad\text{in}\quad L^q_t\\
    \langle G(h_\lambda)(\Div \hat{\sigma}^k_\lambda)\hat{\sigma}^k_\lambda ,\nabla \varphi\rangle &\to 0,\quad\text{in}\quad L^{\frac{q}{2}}_t\\
    \langle G(h_\lambda)\partial_i\hat{\sigma}^k_{\lambda,j}\partial_j\hat{\sigma}^k_{\lambda,i},\varphi\rangle &\to 0,\quad\text{in}\quad L^{\frac{q}{2}}_t\\
    \langle H(h_\lambda)(\Div\sigma_\lambda^k)^2,\varphi\rangle &\to 0,\quad\text{in}\quad L^{\frac{q}{2}}_t.
  \end{aligned}
\end{equation}
Using the fact that $\Gamma(h_\lambda)$ is bounded by a deterministic constant, we can upgrade the $\P$ almost-sure convergence to convergence of all moments by the bounded convergence theorem.

We now have enough to pass the limit in all the terms of equation (\ref{eq:renorm-hlam}), including the stochastic integrals, proving (\ref{eq:weak-renorm-thm}) for $\Gamma(z)$ satisfying (\ref{eq:Gamma-bound}). We can then upgrade to more

\end{proof}

\section{Acknowledgments}
The author thanks Scott Smith for many fruitful discussions and for a careful reading of the manuscript.

\section{Appendix}
\begin{lem}\label{lem:diffeo-converge}
  Let $f\in L^p(\R^n)$, $p\in [1,\infty)$ and for each $\lambda>0$ let $\phi_{\lambda}:\R^n\to\R^n$ be a Lipschitz homeomorphism, satisfying
  \begin{equation}\label{eq:lipschitz-homeo-bound}
    \lim_{\lambda\to 0} \|\phi_{\lambda} - \Id\|_{W^{1,\infty}} = 0,
  \end{equation}
  then
  \begin{equation}
    f(\phi^{-1}_{\lambda}) \to f\quad \text{in}\quad L^p, \text{as }\lambda\to \infty.
  \end{equation}
\end{lem}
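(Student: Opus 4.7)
The natural approach is the standard density argument, reducing to continuous compactly supported $f$ where pointwise convergence is easy, and then controlling the $L^p$ operator norm of the pullback via a change of variables. The hypothesis that $\phi_\lambda \to \Id$ in $W^{1,\infty}$ is exactly what is needed to make this work on both fronts.

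First I would observe that the assumption $\|\phi_\lambda - \Id\|_{W^{1,\infty}} \to 0$ implies that $\nabla\phi_\lambda$ is close to the identity matrix uniformly, so for all sufficiently small (or large) $\lambda$, $\phi_\lambda$ is a bi-Lipschitz homeomorphism with $\frac{1}{2} \leq |\det\nabla\phi_\lambda| \leq 2$ (say) and $\|\phi_\lambda^{-1} - \Id\|_{L^\infty} \to 0$ as well — the latter follows from $\|\phi_\lambda^{-1} - \Id\|_{L^\infty} = \|\Id - \phi_\lambda \circ \phi_\lambda^{-1} + \phi_\lambda \circ \phi_\lambda^{-1} - \phi_\lambda^{-1}\|_{L^\infty} \leq \|\phi_\lambda - \Id\|_{L^\infty}$. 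With this in hand, the change of variables $y = \phi_\lambda^{-1}(x)$ gives
\begin{equation*}
\|f(\phi_\lambda^{-1})\|_{L^p}^p = \int_{\R^n} |f(y)|^p\,|\det\nabla\phi_\lambda(y)|\,\dy \leq 2\|f\|_{L^p}^p,
\end{equation*}
so the family of pullback operators $T_\lambda f := f\circ\phi_\lambda^{-1}$ is uniformly bounded on $L^p$.

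Next I would handle the case $f \in C_c(\R^n)$. The uniform convergence $\phi_\lambda^{-1} \to \Id$ implies $f\circ\phi_\lambda^{-1} \to f$ uniformly, and for $\lambda$ close enough to the limit the supports of $f\circ\phi_\lambda^{-1}$ all lie in a fixed compact set (a small enlargement of $\supp f$). Dominated convergence, or simply bounding by uniform convergence times the measure of that compact set to the power $1/p$, gives $\|T_\lambda f - f\|_{L^p} \to 0$ for $f \in C_c$.

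The general case follows by an $\epsilon/3$ argument using density of $C_c(\R^n)$ in $L^p(\R^n)$ (which is where the hypothesis $p < \infty$ enters): given $f \in L^p$ and $\epsilon > 0$, pick $g \in C_c$ with $\|f - g\|_{L^p} < \epsilon$, and write
\begin{equation*}
\|T_\lambda f - f\|_{L^p} \leq \|T_\lambda(f-g)\|_{L^p} + \|T_\lambda g - g\|_{L^p} + \|g - f\|_{L^p} \leq (1 + 2^{1/p})\epsilon + \|T_\lambda g - g\|_{L^p},
\end{equation*}
and the last term vanishes as $\lambda$ tends to the limit by the previous step. I do not expect any serious obstacle — the only mildly technical point is the uniform bound on $|\det\nabla\phi_\lambda|$, which is immediate from the $W^{1,\infty}$ convergence and continuity of the determinant.
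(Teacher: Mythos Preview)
Your proof is correct and takes a genuinely different route from the paper's. The paper argues via the Vitali convergence theorem: it first notes pointwise a.e.\ convergence $f(\phi_\lambda^{-1}) \to f$, then uses the change-of-variables bound $\int_A |f(\phi_\lambda^{-1})|^p \lesssim \int_{\phi_\lambda^{-1}(A)} |f|^p$ to verify uniform integrability (with $A = \{|f(\phi_\lambda^{-1})|^p > c\}$, whose preimage is $\{|f|^p > c\}$) and tightness (with $A = \{|x| > R\}$, whose preimage sits inside $\{|x| > R - \tfrac12\}$). You instead show the pullback operators $T_\lambda f = f\circ\phi_\lambda^{-1}$ are uniformly bounded on $L^p$ by the same change of variables, establish convergence on the dense subspace $C_c(\R^n)$ via uniform convergence on a fixed compact set, and finish with the standard $\epsilon/3$ density argument. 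Both proofs hinge on the Jacobian bound; yours is more elementary and avoids the level-set and tail bookkeeping, while the paper's works directly with the given $f$ without invoking a dense subclass.

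One cosmetic remark: your telescoping line for $\|\phi_\lambda^{-1} - \Id\|_{L^\infty}$ has a sign slip (the right-hand side as written equals $(\phi_\lambda - \Id)\circ\phi_\lambda^{-1}$, which is $-(\phi_\lambda^{-1} - \Id)$), but the norm bound you extract, $\|\phi_\lambda^{-1} - \Id\|_{L^\infty} \le \|\phi_\lambda - \Id\|_{L^\infty}$, is of course unaffected and correct.
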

\begin{proof}
  It is a simple consequence of the Lipschitz convergence (\ref{eq:lipschitz-homeo-bound}) that
  \begin{equation}
    f(\phi^{-1}_\lambda) \to f,\quad\text{pointwise a.s.}
  \end{equation}
 In order to upgrade this to $L^p$ convergence, it suffices to show uniform integrability and tightness of $|f(\phi^{-1}_{\lambda}) -f|^p$. In light of the pointwise bound
  \begin{equation}
    |f(\phi^{-1}_{\lambda}) - f|^p\leqc |f(\phi^{-1}_{\lambda})|^p +|f|^p, 
  \end{equation}
  it suffices to show uniform integrability and tightness of $|f(\phi^{-1}_{\lambda})|^p$. We will also assume (taking $\lambda$ large enough) that
  \begin{equation}
    \sup_{\lambda}\|\phi_\lambda - \Id\|_{W^{1,\infty}} < \frac{1}{2}.
  \end{equation}
  This, for instance, implies the bounds
  \begin{equation}
    \left(\tfrac{1}{2}\right)^n \leq |\det\left(\nabla\phi_\lambda\right)| \leq \left(\tfrac{3}{2}\right)^n
  \end{equation}
  We will use that fact that for each $\lambda$ and measurable subset $A\subseteq \R^n$, the following inequality holds
  \begin{equation}
    \int_{A}|f(\phi^{-1}_{\lambda})|^p \leqc \int_{\phi^{-1}_\lambda(A)} |f|^p,
  \end{equation}
  where the constant $C_{p,n}$ depends on $n$ and $p$, but not $\lambda$. Note that
  \begin{equation}
    \phi^{-1}_\lambda(\{|f(\phi^{-1}_{\lambda})|^p >c\}) = \{|f|^p > c\}
  \end{equation}
  and therefore uniform integrability follows from
  \begin{equation}
    \sup_{\lambda}\int_{\{|f(\phi^{-1}_{\lambda})|^p > c\}} |f(\phi^{-1}_{\lambda})|^p \leqc \int_{\{|f|^p > c\}} |f|^p \to 0
    \end{equation}
    as $c\to \infty$. Similarly, since
  \begin{equation}
    \phi_\lambda^{-1}(\{|x|>R\}) = \{|\phi_{\lambda}(x)| > R\} \subseteq \{|x| > R - |x- \phi_\lambda(x)|\} \subseteq \{|x| > R- \tfrac{1}{2}\},
  \end{equation}
  tightness follows from
  \begin{equation}
    \sup_{\lambda} \int_{\{|x|>R\}} |f(\phi^{-1}_{\lambda})|^p \leqc \int_{\{|x| > R-\tfrac{1}{2}\}} |f|^p \to 0
  \end{equation}
  as $R\to \infty$. 
\end{proof}

\begin{lem}\label{lem:diffeo-lam-conv}
  Consider a family $\{g_\lambda\}_{\lambda >0} \subseteq L^p(\R^n)$, $p\in [1,\infty)$ satisfying
  \begin{equation}
    g_\lambda \to g \quad \text{in} \quad L^p,
  \end{equation}
  and let $\phi_\lambda:\R^n \to \R^n$ be as in Lemma \ref{lem:diffeo-converge} then
  \begin{equation}
    g_\lambda(\phi_\lambda^{-1})\to g \quad \text{in}\quad L^p. 
  \end{equation}
\end{lem}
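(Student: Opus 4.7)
The plan is to bound $\|g_\lambda(\phi_\lambda^{-1}) - g\|_{L^p}$ by splitting it with the triangle inequality as
\begin{equation*}
\|g_\lambda(\phi_\lambda^{-1}) - g\|_{L^p} \leq \|g_\lambda(\phi_\lambda^{-1}) - g(\phi_\lambda^{-1})\|_{L^p} + \|g(\phi_\lambda^{-1}) - g\|_{L^p},
\end{equation*}
and then showing that each of the two summands vanishes as $\lambda \to \infty$ for essentially independent reasons.

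For the second term, I would simply invoke Lemma \ref{lem:diffeo-converge} applied to the fixed function $g \in L^p$, since the family $\{\phi_\lambda\}$ already satisfies the required Lipschitz convergence hypothesis $\|\phi_\lambda - \Id\|_{W^{1,\infty}} \to 0$. This gives $\|g(\phi_\lambda^{-1}) - g\|_{L^p} \to 0$ directly.

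For the first term, I would perform the change of variables $y = \phi_\lambda^{-1}(x)$ to write
\begin{equation*}
\|g_\lambda(\phi_\lambda^{-1}) - g(\phi_\lambda^{-1})\|_{L^p}^p = \int_{\R^n} |g_\lambda(y) - g(y)|^p \, |\det\nabla\phi_\lambda(y)| \, \dy,
\end{equation*}
which is legitimate since $\phi_\lambda$ is a bilipschitz homeomorphism by the hypothesis $\|\phi_\lambda - \Id\|_{W^{1,\infty}} \to 0$ (so that in particular $\nabla \phi_\lambda = I + \nabla(\phi_\lambda - \Id)$ is invertible for $\lambda$ large). Assuming $\lambda$ is large enough that $\|\phi_\lambda - \Id\|_{W^{1,\infty}} < \tfrac{1}{2}$, the standard bound $|\det \nabla \phi_\lambda| \leq (3/2)^n$ (already used in the proof of Lemma \ref{lem:diffeo-converge}) gives
\begin{equation*}
\|g_\lambda(\phi_\lambda^{-1}) - g(\phi_\lambda^{-1})\|_{L^p} \leq (3/2)^{n/p}\|g_\lambda - g\|_{L^p},
\end{equation*}
which tends to $0$ by the hypothesis $g_\lambda \to g$ in $L^p$.

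There is no real obstacle here: the only point requiring mild care is making the change of variables rigorous, which is covered by the bilipschitz property implied by $\|\phi_\lambda - \Id\|_{W^{1,\infty}} \to 0$ together with the uniform Jacobian bounds already established in the proof of Lemma \ref{lem:diffeo-converge}. Combining the two bounds yields the claim.
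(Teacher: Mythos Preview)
Your proof is correct and follows exactly the same approach as the paper: the triangle inequality split into $\|g_\lambda(\phi_\lambda^{-1}) - g(\phi_\lambda^{-1})\|_{L^p} + \|g(\phi_\lambda^{-1}) - g\|_{L^p}$, with the second term handled by Lemma~\ref{lem:diffeo-converge} and the first by the uniform Jacobian bound. The paper's proof is in fact more terse than yours, simply citing the triangle inequality and Lemma~\ref{lem:diffeo-converge} without spelling out the change of variables.
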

\begin{proof}
  The proof is a simple consequence of Lemma \ref{lem:diffeo-converge} and following inequality
  \begin{equation}
      \|g_\lambda(\phi_\lambda^{-1}) - g\|_{L^p} \leq \|g_\lambda(\phi_\lambda^{-1}) - g(\phi_\lambda^{-1})\|_{L^p} + \|g(\phi_\lambda^{-1}) - g\|_{L^p}
  \end{equation}
\end{proof}

The following endpoint product-limit Lemma is more or less well-known in analysis. However, we include a proof for the reader who may not be familiar.

\begin{lem}\label{lem:prod-lim}
  Let $\{u_\lambda\}\subseteq L^\infty_{t,x}$ and $\{v_\lambda\}\subseteq L^q_t(L^p)$. Suppose that $u_{\lambda} \to u$ pointwise almost everywhere with $u \in L^\infty_{t,x}$, while $v_\lambda \to v$ in $L^q_t(L^p)$. If $\{u_{\lambda}\}$ satisfies the uniform bound
  \begin{equation}
   \sup_{\lambda} \|u_n\|_{L^\infty_{t,x}} <\infty,
 \end{equation}
 then
 \begin{equation}
   u_nv_n\to uv,\quad\text{in}\quad L^q_t(L^p).
 \end{equation}
\end{lem}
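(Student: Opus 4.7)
\medskip

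\noindent\textbf{Proof proposal.} The plan is the standard two-piece splitting
\begin{equation*}
  u_\lambda v_\lambda - uv = (u_\lambda - u)v + u_\lambda(v_\lambda - v),
\end{equation*}
and to bound each summand in $L^q_t(L^p)$ separately. Let $C := \sup_\lambda\|u_\lambda\|_{L^\infty_{t,x}} + \|u\|_{L^\infty_{t,x}}$, which is finite by hypothesis (and by passing to the pointwise a.e. limit). The second summand is immediate from the uniform bound together with the strong convergence of $\{v_\lambda\}$:
\begin{equation*}
  \|u_\lambda(v_\lambda - v)\|_{L^q_t(L^p)} \leq C\,\|v_\lambda - v\|_{L^q_t(L^p)} \to 0.
\end{equation*}

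For the first summand $(u_\lambda - u)v$ I would run a nested dominated convergence argument. Since convergence in $L^q_t(L^p)$ reduces to sequential convergence, fix an arbitrary sequence $\lambda_n$. Applying Fubini to the hypothesis that $u_{\lambda_n} \to u$ pointwise a.e. on $[0,T]\times\R^n$, we obtain that for almost every $t\in[0,T]$ the function $u_{\lambda_n}(t,\cdot)$ converges to $u(t,\cdot)$ pointwise a.e. in $x$. For each such $t$, the integrand $|(u_{\lambda_n}(t,x) - u(t,x))v(t,x)|^p$ tends to zero pointwise in $x$ and is dominated by $(2C)^p|v(t,x)|^p$, which lies in $L^1_x$. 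Dominated convergence in $x$ then yields
\begin{equation*}
  \|(u_{\lambda_n}(t) - u(t))v(t)\|_{L^p}^q \to 0 \quad\text{for a.e. } t\in[0,T].
\end{equation*}

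The time slices are dominated by $(2C)^q\|v(t)\|_{L^p}^q \in L^1_t$, so a second application of dominated convergence---now in $t$---gives $\|(u_{\lambda_n} - u)v\|_{L^q_t(L^p)} \to 0$. Since the sequence $\{\lambda_n\}$ was arbitrary, the full parametric limit follows, and combining both pieces completes the proof. No individual step is genuinely difficult; the only mild subtlety is organizing the two-layer dominated convergence and invoking Fubini to pass from the a.e. hypothesis on $[0,T]\times\R^n$ to an a.e. statement in $t$ alone.
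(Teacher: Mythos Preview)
Your proof is correct and takes a genuinely different route from the paper. The paper uses Egorov's theorem: given $\epsilon > 0$, it finds a set $A$ of small measure outside of which $u_\lambda \to u$ uniformly, then splits $(u_\lambda - u)v$ into the piece on $A^c$ (controlled by uniform convergence) and the piece on $A$ (controlled by $\|\1_A v\|_{L^q_t(L^p)} \to 0$ as $|A|\to 0$). Your approach instead runs dominated convergence twice---first in $x$, then in $t$---with $|v(t,\cdot)|^p$ and $\|v(t)\|_{L^p}^q$ as dominating functions. Your argument is arguably cleaner here: the paper's invocation of Egorov is on the infinite-measure space $[0,T]\times\R^n$, which strictly speaking requires an additional truncation step (e.g.\ restrict first to $[0,T]\times B_R$ and handle the tail $\|\1_{\{|x|>R\}}v\|_{L^q_t(L^p)}$ separately), whereas your nested DCT sidesteps this entirely. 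Both arguments implicitly need $p, q < \infty$.
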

\begin{proof}
  By Egorov's theorem, for every $\ep >0$ there exists a set $A\subset [0,T]\times\R^n$, $|A|<\ep$ such that
  \begin{equation}
    u_\lambda\to u\quad\text{uniformly on }A^c.
  \end{equation}
  We may then write
  \begin{equation}
    \begin{aligned}
      u_\lambda v_\lambda - uv &= (u_\lambda - u)v + u_\lambda(v_\lambda - v)\\
      &= (u_\lambda - u)\1_{A^c}v +(u_\lambda -u)\1_{A}v + u_\lambda(v_\lambda-v).
    \end{aligned}
  \end{equation}
  Therefore
  \begin{equation}
    \begin{aligned}
      \|u_\lambda v_\lambda - uv\|_{L^q_t(L^p)} &\leq \|u_\lambda - u\|_{L^\infty(A^c)} \|v\|_{L^q_t(L^p)}\\
      &+ \left(\sup_{\lambda}\|u_\lambda\|_{L^\infty} + \|u\|_{L^\infty}\right)\|\1_{A}v\|_{L^q_t(L^p)}\\
      &+ \left(\sup_{\lambda} \|u_\lambda\|_{L^\infty}\right)\|v_\lambda - v\|_{L^q_t(L^p)}
    \end{aligned}
  \end{equation}
  Sending $\lambda \to \infty$ gives
  \begin{equation}
    \limsup_\lambda \|u_\lambda v_\lambda - uv\|_{L^q_t(L^p)} \leq \left(\sup_{\lambda}\|u_\lambda\|_{L^\infty} + \|u\|_{L^\infty}\right)\|\1_{A}v\|_{L^q_t(L^p)}.
  \end{equation}
  Sending $\ep \to 0$, and using the integrability of $v$ gives the result.
\end{proof}

\bibliographystyle{abbrv}
\bibliography{bibliography}

\end{document}